\colorlet{cite}{LimeGreen!50!Green}
\newcommand{\ce}{\mathrel{\mathop:}=}
\newcommand{\pd}[1]{\frac{\partial}{\partial #1}}
\newcommand{\trwdg}[3]{\partial_{#1} \wedge \partial_{#2} \wedge \partial_{#3}}
\newcommand{\pbra}[4]{#1^{#2} #1^{#3}_{,#4} - #1^{#3} #1^{#2}_{,#4}}
\numberwithin{equation}{section}
\theoremstyle{definition}
\newtheorem{note}[equation]{Note}
\newtheorem{definition}[equation]{Definition}
\newtheorem{theorem}[equation]{Theorem}
\newtheorem*{theorem*}{Theorem}
\newtheorem{proposition}[equation]{Proposition}
\newtheorem{lemma}[equation]{Lemma}
\newtheorem*{lemma*}{Lemma}
\newtheorem{corollary}[equation]{Corollary}
\newtheorem*{corollary*}{Corollary}
\newtheorem{notation}[equation]{Notation}
\newtheorem{remark}[equation]{Remark}
\DeclareMathOperator{\Tot}{Tot}
\renewcommand{\H}{\operatorname{H}}
\newcommand{\SC}{0.7}
\newcommand{\Zmone}{
\begin{minipage}{0.2\textwidth} 
\centering \vspace{4pt}
\begin{tikzcd}
	\fill[pattern=north west lines, pattern color=SkyBlue] (0,0) -- (0,1*\SC) -- (-1*\SC,1*\SC) -- (-1*\SC,-1*\SC);
	\fill[pattern=north east lines, pattern color=SkyBlue] (0,0) rectangle (1*\SC,1*\SC); 
	\draw[very thick,->] (0,0) -- (1*\SC,0);
	\draw[very thick,->] (0,0) -- (0,1*\SC);
	\draw[very thick,->] (0,0) -- (-1*\SC,-1*\SC);
	\begin{scope}
		\foreach \x in {-1,...,1}
		{	\foreach \y in {-1,0,1}
			{
			\filldraw (\x*\SC,\y*\SC) circle (0.03cm);
			}
		}
	\end{scope}	
\end{tikzcd}
\vspace{4pt}
\end{minipage}
}
\newcommand{\Zthree}{
\begin{minipage}{0.2\textwidth}
\centering \vspace{4pt}
\begin{tikzcd}
	\fill[pattern=north west lines, pattern color=SkyBlue] (0,0) -- (-1*\SC,3*\SC) -- (0,3*\SC);
	\fill[pattern=north east lines, pattern color=SkyBlue] (0,0) rectangle (1*\SC,3*\SC); 
	\draw[very thick,->] (0,0) -- (1*\SC,0);
	\draw[very thick,->] (0,0) -- (0,1*\SC);
	\draw[very thick,->] (0,0) -- (-1*\SC,3*\SC);
	\begin{scope}
		\foreach \x in {-1,...,1}
		{	\foreach \y in {0,...,3}
			{
			\filldraw (\x*\SC,\y*\SC) circle (0.03cm);
			}
		}
	\end{scope}	
\end{tikzcd}
\vspace{4pt}
\end{minipage}
}	
\newcommand{\ccfan}{
\begin{minipage}{0.1\textwidth}
\centering \vspace{4pt}
\begin{tikzcd}
	\fill[pattern=north west lines, pattern color=SkyBlue] (0,0) rectangle (1*\SC,1*\SC);

	\draw[very thick,->] (0,0) -- (1*\SC,0);
	\draw[very thick,->] (0,0) -- (0,1*\SC);
	\begin{scope}
		\foreach \x in {0,1}
		{	\foreach \y in {0,1}
			{
			\filldraw (\x*\SC,\y*\SC) circle (0.03cm);
			}
		}
	\end{scope}	
\end{tikzcd}
\vspace{4pt}
\end{minipage}
}
\newcommand{\cfan}{
\begin{minipage}{0.05\textwidth}
\centering \vspace{4pt}
\begin{tikzcd}
%
	\draw[very thick,->] (0,0) -- (0,1*\SC);
	\begin{scope}
		\foreach \x in {0}
		{	\foreach \y in {0,1}
			{
			\filldraw (\x*\SC,\y*\SC) circle (0.03cm);
			}
		}
	\end{scope}	
\end{tikzcd}
\vspace{4pt}
\end{minipage}
}
\newcommand{\chfan}{
\begin{minipage}{0.1\textwidth}
\centering \vspace{4pt}
\begin{tikzcd}
%
	\draw[very thick,->] (0,0) -- (1*\SC,0);
	\begin{scope}
		\foreach \x in {0,1}
		{	\foreach \y in {0}
			{
			\filldraw (\x*\SC,\y*\SC) circle (0.03cm);
			}
		}
	\end{scope}	
\end{tikzcd}
\vspace{4pt}
\end{minipage}
}
\newcommand{\Zzero}{
\begin{minipage}{.2\textwidth}
\centering \vspace{4pt}
\begin{tikzcd}
	\fill[pattern=north west lines, pattern color=SkyBlue] (0,0) rectangle (-1*\SC,1*\SC);
	\fill[pattern=north east lines, pattern color=SkyBlue] (0,0) rectangle (1*\SC,1*\SC); 
	\draw[very thick,->] (0,0) -- (1*\SC,0);
	\draw[very thick,->] (0,0) -- (0,1*\SC);
	\draw[very thick,->] (0,0) -- (-1*\SC,0);
	\begin{scope}
		\foreach \x in {-1,...,1}
		{	\foreach \y in {0,1}
			{
			\filldraw (\x*\SC,\y*\SC) circle (0.03cm);
			}
		}
	\end{scope}	
\end{tikzcd}
\vspace{4pt}
\end{minipage}
}
\author[Ballico Gasparim K\"{o}ppe Suzuki]{E. Ballico, E. Gasparim, T. K\"{o}ppe, B. Suzuki}
\address{EG, BS - Depto. Matem\'aticas, Univ. Cat\'olica del Norte, Antofagasta, 
Chile, etgasparim@gmail.com, obrunosuzuki@gmail.com. EB -  Dept. Mathematics, Univ. Trento,  Italy,
ballico@science.unitn.it.}
\title[Poisson structures on local Calabi--Yau threefolds]{Poisson structures on the conifold \\ and local Calabi--Yau threefolds}
\begin{document}

\begin{abstract}
We describe  bivector fields  and Poisson structures on   local Calabi--Yau threefolds
which are total spaces of vector bundles on a contractible rational curve.
In particular, we calculate all possible holomorphic Poisson structures on the  conifold.
\end{abstract}

\maketitle

\section{Motivation and results}

We are interested in  holomorphic Poisson structures on Calabi--Yau threefolds
that contain a contractible rational curve. Here we consider the local situation. Hence, 
we study  Poisson structures on Calabi--Yau threefolds that are the total space of 
a rank 2 vector bundle on  $\mathbb P^1$. A result of Jim\'enez \cite{J} 
says that the contraction of a smooth rational curve on a threefold may happen in exactly 3 cases, 
namely when the normal bundle to such a
curve is one of  
$$\mathcal O_{\mathbb P^1}(-1) \oplus \mathcal O_{\mathbb P^1}(-1), \quad 
\mathcal O_{\mathbb P^1}(-2) \oplus \mathcal O_{\mathbb P^1}(0), \quad \text{or} \quad
\mathcal O_{\mathbb P^1}(-3) \oplus \mathcal O_{\mathbb P^1}(1),$$
although only in the first case it can contract to an isolated singularity.
In this work we describe completely the local case, that is, 
we classify all isomorphism classes of holomorphic Poisson structures on  the local Calabi--Yau threefolds 
$$W_k\ce \Tot (\mathcal O_{\mathbb P^1}(-k) \oplus \mathcal O_{\mathbb P^1}(k-2)), \quad k=1,2,3,$$
calculate their  Poisson cohomology, describe their  symplectic foliations and some properties of their moduli. 
 
Polishchuk  shows a correspondence between Poisson structures on a scheme $X$ and a blow-up $\widetilde{X}$,
which applies to the  cases we study  \cite[Thm.\thinspace 8.2, 8.4]{Po}. Hence, 
describing Poisson structures on $W_k$  
is equivalent to describing Poisson structures on the singular threefolds obtained from them by contracting the rational curve
to a point.
 
Poisson structures are parametrized by those elements of  $H^0(W_k,\Lambda^2 TW_k)$ which are integrable. 
We briefly recall some basic definitions from Poisson cohomology, for details see \cite[Ch.\thinspace 4]{LPV}.
Let $(M, \pi)$ be a Poisson Manifold. The graded algebra $\mathfrak{X}^{\bullet}(M) = \Gamma(\wedge^\bullet TM)$ 
and the degree-$1$ differential operator $d_\pi = [\pi, \cdot]$ define the {\it Poisson Cohomology} of $(M, \pi)$. 
The first cohomology groups have clear geometric meaning:

$\H^0(M, \pi) = \ker [\pi, \cdot] = \mbox{Cas}(\pi) = $ holomorphic functions on $M$ which are constant along symplectic leaves.
These are the Casimir functions of $(M,\pi)$.

$\H^1(M, \pi) = \dfrac{\mbox{Poiss}(\pi)}{\mbox{Ham}(\pi)}$ is the quotient of Poisson vector fields by Hamiltonian vector fields.

We compute Poisson cohomology groups and use them to distinguish Poisson structures,  identifying their degeneracy loci. 

The \emph{$r^\text{th}$ degeneracy locus} of a holomorphic Poisson structure $\sigma$ on a complex manifold or algebraic variety $X$ is defined as
\[
D_{2r} (\sigma) \ce \{ x \in X \mid \mbox{rank}\, \sigma (x) \leq 2r \} \text{ ,}
\]
where $\sigma$ is viewed as a map $\mathcal T_X^* \to \mathcal T_X$ 
by contracting a $1$-form with the bivector field $\sigma$. 
At a given point on a complex { threefold} a holomorphic Poisson structure has either  rank $2$, 
or rank $0$. 
Therefore, for the threefolds $W_k$ we name 
$$D (\sigma) \ce D_0 (\sigma)$$
the \emph{degeneracy locus} of $\sigma$, hence it consists of points where $\sigma$ has rank 0.

A nondegenerate holomorphic Poisson structure $\sigma$ is called a {\it holomorphic symplectic} structure, 
given that $\sigma$ determines a nondegenerate closed holomorphic $2$-form $\omega$ by setting
\[
\omega (X_f, X_g) = \{ f, g \}_\sigma \text{ ,}
\]
where $X_f$ denotes the Hamiltonian vector field associated to a function $f$.

\begin{remark}\label{leaves}
Each Poisson structure determines a symplectic foliation, whose leaves consist of 
maximal symplectic submanifolds. 
In particular, in the case of threefolds, 
the degeneracy locus $D(\sigma)$ is formed by leaves  consisting  of a single point each,
and all other leaves have  complex dimension 2.
\end{remark}

Describing holomorphic Poisson structures on the Calabi--Yau threefolds $W_k$
can also be regarded as describing their first-order 
noncommutative deformations. The  commutative deformation theory of these threefolds $W_k$ and 
the structure of moduli of vector bundles on them is described in detail in \cite{BGS}.
The surfaces $Z_k \ce \Tot (\mathcal O_{\mathbb P^1}(-k))$,
which were discussed in \cite{BG1,BG2} and \cite{BeG}, occur here as useful tools. 

Motivated by the definition of {\it moduli space of Poisson structures} given in \cite[Sec.\thinspace 1.2]{Pym}, namely 
the quotient $\mbox{Poiss}(X)/\mbox{Aut}(X)$,
we also  describe some isomorphisms among Poisson structures. 
We note that the space of Poisson structures on a threefold can be seen as a cone over global functions in
the following sense:

\begin{proposition}\label{subm}
Let $X$ be a smooth complex threefold and $u$ a Poisson structure on $X$, i.e. an integrable bivector field. Then $fu$ is
integrable for all $f\in \mathcal O(X)$.
\end{proposition}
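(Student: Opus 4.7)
The plan is to verify directly that the Schouten–Nijenhuis bracket $[fu,fu]$ vanishes, since a bivector field is integrable precisely when its Schouten bracket with itself is zero. The strategy is to expand $[fu,fu]$ using the graded Leibniz rule into a multiple of $[u,u]$, which vanishes by assumption, plus a single extra term whose vanishing is specific to dimension three.

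First, applying the Leibniz identity $[P,gQ]=[P,g]\wedge Q+g[P,Q]$ together with the graded symmetry of the Schouten bracket on bivectors and the standard identity $[u,f]=-\iota_{df}u=-X_f$, where $X_f$ denotes the Hamiltonian vector field of $f$ with respect to $u$, I would derive the identity
$$[fu,fu]=f^{2}[u,u]-2f\,X_f\wedge u.$$
The first summand vanishes by hypothesis, so the whole argument reduces to showing $X_f\wedge u=0$.

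The key observation is that on a smooth threefold every bivector field is decomposable: the Plücker relation $u\wedge u=0$ holds for dimensional reasons, since $u\wedge u$ is a $4$-vector on a $3$-manifold, and this is precisely the condition characterising decomposability. Locally writing $u=e_1\wedge e_2$ for suitable vector fields $e_1,e_2$, one obtains
$$X_f=\iota_{df}u=df(e_1)\,e_2-df(e_2)\,e_1\in\operatorname{span}(e_1,e_2),$$
whence $X_f\wedge u=X_f\wedge e_1\wedge e_2=0$, which finishes the proof.

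The only mildly delicate point is keeping track of signs in the Schouten bracket expansion; should this become cumbersome, I would instead do everything in local coordinates. Writing $u=\sum_{i<j}u^{ij}\partial_i\wedge\partial_j$ one has $[u,u]=2J(u)\,\partial_1\wedge\partial_2\wedge\partial_3$, with $J(u)$ the usual cyclic Jacobi sum, and a direct computation then gives $J(fu)=f^{2}J(u)$, because the cross terms collect into $\sum_{\ell}(\partial_\ell f)\bigl(u^{\ell 1}u^{23}+u^{\ell 2}u^{31}+u^{\ell 3}u^{12}\bigr)$ whose parenthesised coefficient vanishes for every $\ell$ by the antisymmetry $u^{ij}=-u^{ji}$, recovering the same conclusion.
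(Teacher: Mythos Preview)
Your proof is correct and takes a genuinely different route from the paper's. The paper works on the dual side: on a threefold a bivector is locally the contraction of a section of $K_X^{-1}$ with a holomorphic $1$-form $w$, and integrability is the Frobenius condition $w\wedge dw=0$ (citing Pym). Replacing $w$ by $fw$ gives $(fw)\wedge d(fw)=f^{2}\,w\wedge dw + f\,w\wedge df\wedge w$, and both summands vanish, the second simply because $w\wedge w=0$ for any $1$-form.

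Your argument stays on the multivector side and expands $[fu,fu]$ directly, reducing to $X_f\wedge u=0$, which you deduce from the decomposability of bivectors in dimension three. This is the mirror image of the paper's step $w\wedge w=0$: both encode that the ``cross term'' lives in a piece that is automatically zero in dimension three. Your approach is slightly more self-contained (no appeal to the anticanonical/1-form correspondence or the Frobenius reformulation of integrability), at the cost of juggling Schouten-bracket signs; the paper's approach is shorter once that correspondence is granted. Your coordinate backup, showing $J(fu)=f^{2}J(u)$ from the antisymmetry of $u^{ij}$, is also valid and matches the explicit identity the paper records in its equation for $[q,q]$.
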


\begin{proof}
Since $fu$ is a holomorphic bivector, we only need to check that $fu$ is integrable. This is a local condition. Locally $u$ is
the product of an element of $K_X^{-1}$ and a holomorphic one-form $w$, and the integrability of $u$ is equivalent to
$w \wedge dw = 0$ \cite[Eq.\thinspace 4]{Pym}. We have $(fw)\wedge d(fw) = f^2w\wedge dw + w\wedge df\wedge w = 0 + 0$.
\end{proof}

Among our local threefolds,  the most famous is certainly $W_1$, 
known in the physics literature as the resolved conifold, since it occurs as the crepant resolution of 
the double point singularity $xy-zw=0$ in $\mathbb C^4$ known as the conifold. 
The conifold singularity is extremely popular in string theory 
because it can be resolved in two different ways, by
a 2-sphere (resolution) or a 3-sphere (deformation). This leads to what is known as a
geometric transition and establishes dualities between distinct theories in physics, such as 
gauge--gravity and open--closed string duality, see \cite{BBR} and references therein.
We start with  bivector fields on $W_1$:

\begin{lemma*}[\ref{biW1}]
The space  $M_1 = H^0(W_1,\Lambda^2TW_1)$
parametrizing all holomorphic bivector fields on $W_1$ 
has the following structure as a module over global holomorphic functions:
 $$M_1 = \langle e_1,e_2,e_3,e_4 \rangle / \langle zu_2e_1-zu_1 e_2- u_2 e_3 +u_1e_4 \rangle \text{ .}$$
\end{lemma*}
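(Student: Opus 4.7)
The approach will be a direct computation in the standard two-chart atlas. Cover $W_1$ by $U \cong \mathbb{C}^3$ with coordinates $(z, u_1, u_2)$ and $V \cong \mathbb{C}^3$ with $(\xi, v_1, v_2)$, glued by $\xi = 1/z$ and $v_i = z u_i$. A general bivector on $U$ reads
\[
\sigma|_U = a\,\partial_z \wedge \partial_{u_1} + b\,\partial_z \wedge \partial_{u_2} + c\,\partial_{u_1} \wedge \partial_{u_2}
\]
with $a, b, c$ holomorphic. Using $\partial_z = -\xi^2 \partial_\xi + u_1 \partial_{v_1} + u_2 \partial_{v_2}$ and $\partial_{u_i} = z\,\partial_{v_i}$ I would obtain
\[
\sigma|_V = -a\xi\,\partial_\xi \wedge \partial_{v_1} - b\xi\,\partial_\xi \wedge \partial_{v_2} + \bigl(cz^2 + bv_1 - av_2\bigr)\,\partial_{v_1} \wedge \partial_{v_2},
\]
so globality of $\sigma$ is equivalent to the three new coefficients being holomorphic on $V$.

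Setting $x_i := z u_i$, the ring of global functions is $\mathcal{O}(W_1) = \mathbb{C}\{u_1, u_2, x_1, x_2\}/(u_1 x_2 - u_2 x_1)$ (the conifold ring). I would then exhibit four generators
\begin{align*}
e_1 &= -\partial_z \wedge \partial_{u_2}, & e_2 &= \partial_z \wedge \partial_{u_1}, \\
e_3 &= u_1\,\partial_{u_1}\wedge\partial_{u_2} - z\,\partial_z\wedge\partial_{u_2}, & e_4 &= u_2\,\partial_{u_1}\wedge\partial_{u_2} + z\,\partial_z\wedge\partial_{u_1},
\end{align*}
which are the contractions of the global $1$-forms $du_1, du_2, dx_1, dx_2$ with the Calabi--Yau volume $dz \wedge du_1 \wedge du_2$. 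A quick check in the transition formula confirms each is holomorphic on $V$, and the identity $zu_2 e_1 - zu_1 e_2 - u_2 e_3 + u_1 e_4 = 0$ drops out of a direct calculation; it is the image of $d(u_1 x_2 - u_2 x_1) = 0$ under the Calabi--Yau duality.

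Surjectivity of the natural $\mathcal{O}(W_1)$-linear map $(f_1, f_2, f_3, f_4) \mapsto \sum f_i e_i$ amounts to recovering any global triple from the formulas $a = f_2 + zf_4$, $b = -f_1 - zf_3$, $c = u_1 f_3 + u_2 f_4$. The holomorphy of $a\xi$ and $b\xi$ on $V$ is equivalent to $a, b$ having monomials $z^i u_1^j u_2^k$ with $i \leq j+k+1$, which lets me choose $f_1, \ldots, f_4 \in \mathcal{O}(W_1)$ realizing the first two equations by peeling off the top $z$-degree terms. Substituting into the third condition gives
\[
-a v_2 + b v_1 + c z^2 = -x_2 f_2 - x_1 f_1 + z^2\gamma, \qquad \gamma := c - u_1 f_3 - u_2 f_4,
\]
so the third globality condition forces $z^2\gamma \in \mathcal{O}(W_1)$, hence $\gamma \in (u_1, u_2)^2\cdot \mathcal{O}(W_1)$. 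The gauge freedom $(f_3, f_4) \mapsto (f_3 + g, f_4 - h)$ with $g, h \in (u_1, u_2)\cdot \mathcal{O}(W_1)$ preserves the earlier equations and shifts $\gamma$ by $-(u_1 g - u_2 h)$; since $(u_1, u_2)^2\cdot\mathcal{O}(W_1) = u_1\cdot(u_1,u_2)\mathcal{O}(W_1) + u_2\cdot(u_1,u_2)\mathcal{O}(W_1)$, we can arrange $\gamma = 0$.

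For the kernel, imposing $\sum f_i e_i = 0$ gives $f_3, f_4 \in (u_1, u_2)\cdot \mathcal{O}(W_1)$ together with $u_1 f_3 + u_2 f_4 = 0$ in the conifold ring. The syzygy module of $(u_1, u_2)$ over $\mathcal{O}(W_1)$ is generated by the Koszul relation $(u_2, -u_1)$ and by $(x_2, -x_1)$ (coming from $u_1 x_2 = u_2 x_1$); the restriction $f_3, f_4 \in (u_1, u_2)\cdot \mathcal{O}(W_1)$ forces any $(x_2, -x_1)$-contribution to collapse onto a $(u_2, -u_1)$-contribution via the conifold relation, so $(f_3, f_4) = \lambda(u_2, -u_1)$ with $\lambda \in \mathcal{O}(W_1)$. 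The accompanying $(f_1, f_2) = (-\lambda x_2, \lambda x_1)$ then recovers the stated relation. The main obstacle is the surjectivity step: verifying that the residual obstruction $\gamma$ lies precisely in $(u_1, u_2)^2\cdot \mathcal{O}(W_1)$ and that this ideal admits the decomposition $u_1 g - u_2 h$ with $g, h \in (u_1, u_2)\cdot\mathcal{O}(W_1)$, which is what enables the gauge freedom in the $(a, b)$-decompositions to absorb it.
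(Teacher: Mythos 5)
Your argument is correct, but it proceeds quite differently from the paper. The paper finds the generators by a \v{C}ech/formal-neighbourhood computation: it expands $a,b,c$ as power series, observes that all generators already occur by the second formal neighbourhood of $\mathbb{P}^1$, lists $13$ vector-space generators together with $8$ relations, and then reduces to the presentation $\langle e_1,\dots,e_4\rangle/\langle zu_2e_1-zu_1e_2-u_2e_3+u_1e_4\rangle$ ``after a long series of reductions, or else, using a computer algebra.'' You instead work structurally: you identify the four generators as contractions of $du_1, du_2, d(zu_1), d(zu_2)$ with the Calabi--Yau trivector (so that generation of $M_1$ mirrors generation of $\mathcal{O}(W_1)$ by $u_1,u_2,x_1,x_2$, and the single relation is the syzygy $d(u_1x_2-u_2x_1)=0$), prove surjectivity by peeling off the top $z$-degree of $a$ and $b$ and absorbing the residual obstruction $\gamma\in(u_1,u_2)^2\mathcal{O}(W_1)$ via the gauge freedom, and compute the kernel from the syzygy module of $(u_1,u_2)$ over the conifold ring. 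Your route buys a self-contained, human-checkable proof of both generation and the exact relation module, plus a conceptual explanation of where the generators and the relation come from; the paper's route is more mechanical but extends uniformly to $W_2$ and $W_3$, where it is reused. Two small points you should make explicit: the gauge transformation must simultaneously send $(f_1,f_2)\mapsto(f_1-zg,\,f_2+zh)$ (which stays in $\mathcal{O}(W_1)$ precisely because $g,h\in(u_1,u_2)\mathcal{O}(W_1)$), otherwise ``the earlier equations'' are not preserved; and the monomial-level facts you invoke --- that $z^2\gamma\in\mathcal{O}(W_1)$ forces $\gamma\in(u_1,u_2)^2\mathcal{O}(W_1)$, and that $zf\in\mathcal{O}(W_1)$ is equivalent to $f\in(u_1,u_2)\mathcal{O}(W_1)$ for global $f$ --- deserve the one-line verification (a monomial $z^iu_1^ju_2^k$ is global iff $i\le j+k$), since the whole surjectivity and kernel analysis rests on them. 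Both check out, so the proof is sound.
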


Describing obstructions to integrability, we obtain an explicit description of Poisson structures
on $W_1$. For ${\bf p}= (p^1,p^2,p^3,p^4) \in M_1$ we 
describe a differential operator
$B({\bf p}) = {\bf p}^tQ{\bf p}$ (\Cref{operator}).

\begin{theorem*}[\ref{t1}]
Every holomorphic Poisson structure on $W_1$ has the form $\sum_{i=1}^4p^ie_i$ where 
$ (p^1,p^2,p^3,p^4) \in  B^{-1}(0)$.
\end{theorem*}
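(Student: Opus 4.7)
The plan is to start from \Cref{biW1} and translate the integrability condition $[\pi,\pi] = 0$ (where $[\cdot,\cdot]$ is the Schouten--Nijenhuis bracket) into an explicit quadratic differential constraint on the coefficients $\mathbf{p} = (p^1,p^2,p^3,p^4)$. Any holomorphic bivector on $W_1$ can be written as $\pi = \sum_{i=1}^4 p^i e_i$ by \Cref{biW1}, so the task reduces entirely to imposing $[\pi,\pi]=0$ and exhibiting the resulting relation as $\mathbf{p}^t Q \mathbf{p}$ for an appropriate matrix $Q$ of first-order differential operators.

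First I would expand $[\pi,\pi]$ using the graded Leibniz rule for the Schouten bracket. For coefficient functions $f,g$ and bivectors $A,B$ on a threefold one has
\[
[fA, gB] = fg\,[A,B] + f\,(\iota_{dg} A)\wedge B - g\,(\iota_{df} B)\wedge A \text{ ,}
\]
so
\[
[\pi,\pi] = \sum_{i,j} p^i p^j\, [e_i, e_j] + 2\sum_{i,j} p^i\,(\iota_{dp^j} e_i)\wedge e_j \text{ .}
\]
Both terms are quadratic in $\mathbf{p}$, but the second is first order in the derivatives of the components. Grouping them as a single bilinear expression yields a symmetric operator-valued matrix $Q$ such that $[\pi,\pi] = \mathbf{p}^t Q\, \mathbf{p}$, which is precisely the operator $B(\mathbf{p})$ of \Cref{operator}. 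The integrability of $\pi$ is therefore equivalent to $B(\mathbf{p})=0$, which is the content of the theorem.

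The main obstacle is the computation of the basic brackets $[e_i,e_j]$ and the contraction terms $\iota_{dp^j} e_i$ in the two affine charts covering $W_1$, and verifying that the resulting quadratic operator is globally well-defined, i.e.\ compatible with the single syzygy $zu_2 e_1 - zu_1 e_2 - u_2 e_3 + u_1 e_4 = 0$ from \Cref{biW1}. Concretely, if $\mathbf{p}$ and $\mathbf{p}'$ differ by a multiple of this relation then $B(\mathbf{p}) = B(\mathbf{p}')$ must hold, which amounts to a linear identity among the $[e_i,e_j]$ and first-order terms that one must check by direct calculation in the trivializing chart.

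Finally, I would verify the result against \Cref{subm}: scaling $\pi$ by a global holomorphic function $f$ preserves integrability, so the locus $B^{-1}(0)$ must be invariant under the diagonal action $\mathbf{p}\mapsto f\mathbf{p}$. This provides a useful sanity check on the matrix $Q$ and, in practice, pinpoints which entries of $Q$ are purely multiplicative versus genuinely differential. Assuming the bracket computations go through cleanly, the statement of the theorem follows immediately by collecting the coefficients of the independent trivector monomials in $[\pi,\pi]=0$.
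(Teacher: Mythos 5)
Your proposal is correct and follows essentially the same route as the paper: write $\pi=\sum_{i=1}^4 p^ie_i$ using \Cref{biW1}, expand the Schouten self-bracket $[\pi,\pi]$, and collect the resulting quadratic first-order expression into $B(\mathbf{p})=\mathbf{p}^tQ\,\mathbf{p}$ as in \Cref{operator} (the paper merely routes the expansion through the coordinate basis $b_0,b_1,b_2$ via equation \eqref{eq.sn0} rather than through the pairwise brackets $[e_i,e_j]$). The compatibility with the syzygy that you flag as the main obstacle is automatic, since $B(\mathbf{p})$ is by construction the coefficient of $[\pi,\pi]$ in the trivector basis and therefore depends only on $\pi$ itself.
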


\begin{theorem*}[\ref{iso}]
The Poisson structures $e_1,e_2,e_3,e_4$ are all  pairwise isomorphic.
\end{theorem*}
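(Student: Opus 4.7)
The plan is to realize each $e_i$ as the push-forward of another under an explicit automorphism of $W_1$, by exploiting the natural $SL_2 \times SL_2$ symmetry inherited from the conifold. First, I would translate the generators of $M_1$ from \Cref{biW1} into global $1$-forms via the Calabi--Yau isomorphism $\Lambda^2 T W_1 \cong \Omega^1_{W_1}$ obtained by contracting with the trivializing $3$-form $\Omega = dz \wedge du_1 \wedge du_2$. Under this identification, the defining relation $z u_2 e_1 - z u_1 e_2 - u_2 e_3 + u_1 e_4 = 0$ corresponds exactly to the relation
\[
v_2\, du_1 - v_1\, du_2 - u_2\, dv_1 + u_1\, dv_2 = 0
\]
obtained by differentiating the conifold equation $u_1 v_2 - u_2 v_1 = 0$ (recall $v_i = z u_i$). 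Hence $(e_1, e_2, e_3, e_4)$ corresponds, up to signs, to $(du_1, du_2, dv_1, dv_2)$, the entries of the differential $dM$ of the matrix $M = \left( \begin{smallmatrix} u_1 & v_1 \\ u_2 & v_2 \end{smallmatrix} \right)$ of global functions on $W_1$.

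Next, I would invoke the realization of $W_1$ as the small resolution of the affine conifold $\{\det M = 0\} \subset \mathbb C^4$. The natural action of $SL_2 \times SL_2$ on $M$ by $M \mapsto g M h$ preserves $\det M$ and lifts to an action on $W_1$. On the entries of $dM$, and therefore on the span of the $e_i$, this action factors through the standard $4$-dimensional representation $\mathbb C^2 \otimes \mathbb C^2$, in which all basis vectors lie in a single orbit.

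To make this concrete I would exhibit two automorphisms and compute their push-forwards directly. The row exchange $\phi_1(z, u_1, u_2) = (z, -u_2, u_1)$, coming from the left action of $\left(\begin{smallmatrix} 0 & -1 \\ 1 & 0 \end{smallmatrix}\right) \in SL_2$, gives $\phi_{1*} e_1 = e_2$ and $\phi_{1*} e_3 = e_4$. The column exchange $\phi_2(z, u_1, u_2) = (-1/z, -z u_1, -z u_2)$, coming from the right action of $\left( \begin{smallmatrix} 0 & 1 \\ -1 & 0 \end{smallmatrix} \right) \in SL_2$ and interchanging the two coordinate charts, gives $\phi_{2*} e_1 = e_3$ and $\phi_{2*} e_2 = e_4$. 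Composing $\phi_1$ and $\phi_2$ then yields isomorphisms for every pair; any residual sign can be absorbed by a fiber scaling $(z, u_1, u_2) \mapsto (z, \lambda u_1, u_2) \in \operatorname{Aut}(W_1)$.

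The main obstacle is the push-forward under $\phi_2$, since its formula has a pole at $z = 0$ and genuinely swaps the two charts. The chain rule gives $\phi_{2*} \partial_z = z^2 \partial_z - z u_1 \partial_{u_1} - z u_2 \partial_{u_2}$ and $\phi_{2*} \partial_{u_i} = z^{-1} \partial_{u_i}$ after re-expressing at the image point, and then a short wedge computation yields $\phi_{2*} (-\partial_z \wedge \partial_{u_2}) = u_1 \partial_{u_1} \wedge \partial_{u_2} - z \partial_z \wedge \partial_{u_2} = e_3$. Globally $\phi_2$ is a well-defined automorphism because it descends from the conifold symmetry $(u_i, v_i) \mapsto (-v_i, u_i)$, which preserves the defining equation and so lifts to the small resolution.
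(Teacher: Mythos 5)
Your proof is correct and is essentially the paper's own argument: the paper establishes the result via the same two automorphisms, one swapping the fiber coordinates ($s_0(z,u_1,u_2)=(z,u_2,u_1)$) and one swapping the charts ($s_1(z,u_1,u_2)=(\xi,v_1,v_2)$), acting transitively on $\{e_1,e_2,e_3,e_4\}$; your $\phi_1,\phi_2$ are signed versions of exactly these maps, and the conifold/$SL_2\times SL_2$ interpretation is added context rather than a different mechanism. (Incidentally, your pairings $e_1\leftrightarrow e_2$ and $e_3\leftrightarrow e_4$ under the fiber swap are the correct ones --- the paper's diagram attaches its $s_0$-edges to the pairs $e_3,e_2$ and $e_1,e_4$ instead, a mislabeling that does not affect the conclusion since the resulting graph on the four generators remains connected.)
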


Since the generators give isomorphic Poisson structures, it is enough to describe the foliation 
corresponding to one of them, we choose $e_2= \partial_0 \wedge \partial_1$.

\begin{theorem*}[\ref{w1fol}]
The symplectic foliation for $(W_1, \partial_0 \wedge \partial_1)$ is given by:

\begin{itemize}
\item $ \partial_0 \wedge \partial_1$ has degeneracy locus on the line 
$\{v_2= \xi=0\}$, where the leaves are $0$-dimensional, consisting of single points, and 
\item
$2$-dimensional symplectic leaves cut out on the $U$ chart by $u_2$ constant.
\end{itemize}
\end{theorem*}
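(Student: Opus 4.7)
The plan is to work in the standard two-chart atlas on $W_1$, write the bivector $\sigma = \partial_0 \wedge \partial_1$ explicitly on each chart, and then read off the degeneracy locus and the symplectic distribution directly from the resulting formulas.

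I would cover $W_1$ by the affine charts $U$ with coordinates $(z,u_1,u_2)$ and $V$ with coordinates $(\xi,v_1,v_2)$, glued on the overlap by $\xi = 1/z$ and $v_i = z u_i$. In $U$-coordinates $\sigma = \partial_z \wedge \partial_{u_1}$ by definition. Applying the chain rule to rewrite $\partial_z$ and $\partial_{u_1}$ in $V$-coordinates, and simplifying with the identity $z\xi = 1$, the bivector becomes
\[
\sigma|_V \;=\; -\,v_2\,\partial_{v_1}\wedge\partial_{v_2} \;-\; \xi\,\partial_\xi\wedge\partial_{v_1}.
\]
This transition computation is the most technical ingredient; the remaining assertions follow by direct inspection.

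From these two expressions the degeneracy locus is immediate: $\sigma$ is nowhere zero on $U$, while on $V$ the two coefficients vanish simultaneously if and only if $\xi = 0$ and $v_2 = 0$, yielding the claimed line $D(\sigma) = \{\xi = v_2 = 0\}$. Away from this locus $\sigma$ is nonzero; since a holomorphic bivector on a complex threefold has rank $0$ or $2$, the rank is exactly $2$ at every point of $W_1 \setminus D(\sigma)$, so every such point lies on a $2$-dimensional symplectic leaf. To identify these leaves I would observe that on $U$ the bivector $\partial_z \wedge \partial_{u_1}$ has no $\partial_{u_2}$ component, so $u_2$ is a Casimir and the Hamiltonian distribution is spanned by $\partial_z$ and $\partial_{u_1}$; consequently the leaves intersect $U$ precisely in the level surfaces $\{u_2 = c\}$. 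A direct check with the formula for $\sigma|_V$ confirms that $\xi v_2$, which coincides with $u_2$ on the overlap, remains a Casimir on $V$, so the description is globally consistent. The only real obstacle is keeping the signs straight in the coordinate transformation.
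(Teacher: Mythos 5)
Your proof is correct and follows essentially the same route as the paper: express $\partial_0\wedge\partial_1$ in the $V$-chart via the change of coordinates, read off the degeneracy locus $\{\xi=v_2=0\}$ from the simultaneous vanishing of the coefficients, and identify $u_2$ as a Casimir to get the $2$-dimensional leaves $\{u_2=\text{const}\}$ on $U$. (The paper records $\sigma|_V$ with the opposite sign on the $\xi\,\partial_\xi\wedge\partial_{v_1}$ term; your sign agrees with a direct chain-rule computation, and in any case the vanishing locus and the conclusion are unaffected.)
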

  
We summarize this result in a small table.

\begin{center}
\begin{tabular}{c|c|l}
\multicolumn{3}{c}{\sc $W_1$ Poisson structures }\\
\multicolumn{3}{c}{} \\
$\pi$ & degeneracy & Casimir \\ \hline
$e_2$  & \chfan &$ f(u_2)$
\end{tabular}
\end{center}
We then see the Poisson structures as determined by surface embeddings.
 
\begin{theorem*}[\ref{princ}] 
The 4 principal  embeddings of the Poisson surface $(Z_1,\pi_0)$ generate all Poisson structures on $W_1$.
\end{theorem*}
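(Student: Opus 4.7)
The plan is to exhibit four natural embeddings $\iota_j : Z_1 \hookrightarrow W_1$ (for $j=1,\ldots,4$) along which the Poisson structure $\pi_0$ on the surface pushes forward to the four bivector fields $e_1,e_2,e_3,e_4$ generating the module $M_1$ from Lemma \ref{biW1}, and then to invoke Theorem \ref{t1} together with Proposition \ref{subm} to conclude that every Poisson structure on $W_1$ is obtained from this data.

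First I would write the four embeddings explicitly in the charts $U=\{(z,u_1,u_2)\}$ and $V=\{(\xi,v_1,v_2)\}$ of $W_1$. The natural candidates are the coordinate surfaces cut out by a single fiber coordinate: $\{u_1=0\}$ and $\{u_2=0\}$ in the $U$-chart, together with $\{v_1=0\}$ and $\{v_2=0\}$ in the $V$-chart. Each of these is a globally defined surface inside $W_1$ isomorphic to $Z_1=\Tot(\mathcal{O}_{\mathbb{P}^1}(-1))$, and the four embeddings they define are distinguished by which coordinate is suppressed on which chart; I would check that these four $\iota_j$ are compatible with the transition $\xi=z^{-1},\; v_i=z u_i$ in the sense needed for a global embedding.

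Next I would compute each pushforward $(\iota_j)_*\pi_0$ in coordinates, starting from the local expression $\pi_0=\partial_z\wedge\partial_u$ on $Z_1$. On the $U$-chart, the embedding $\{u_i=0\}$ sends $\pi_0$ to the bivector $\partial_z\wedge\partial_{u_{3-i}}$ extended by zero in the $\partial_{u_i}$ direction, and analogously for the $V$-chart surfaces. Comparing the resulting four bivectors with the description of $M_1$ in Lemma \ref{biW1}, I would identify each $(\iota_j)_*\pi_0$ with one of the generators $e_j$, the relation $zu_2e_1-zu_1e_2-u_2e_3+u_1e_4=0$ appearing precisely as the cocycle condition relating the $U$- and $V$-chart pushforwards.

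Finally, a Poisson structure on $W_1$ is, by Theorem \ref{t1}, an element $\sum p^i e_i$ of $M_1$ with $(p^1,\ldots,p^4)\in B^{-1}(0)$; since the $e_i$ are realized as pushforwards from the four principal embeddings and the coefficients $p^i$ are global holomorphic functions (so that each summand $p^ie_i$ is integrable by Proposition \ref{subm}), every Poisson structure on $W_1$ is generated by the four principal embeddings of $(Z_1,\pi_0)$. The main obstacle will be the bookkeeping in the second step: one must verify that the coordinate-chart pushforwards genuinely glue to global bivector fields on $W_1$ and that, after this gluing, they match $e_1,\ldots,e_4$ exactly — not merely up to the syzygy — so that no spurious relation is introduced.
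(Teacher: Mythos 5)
Your proposal follows essentially the same route as the paper: identify the pushforwards of $\pi_0$ along the four principal embeddings with the generators $e_1,\dots,e_4$ of Lemma \ref{biW1}, and conclude from Theorem \ref{t1} (with Proposition \ref{subm}) that these generate everything. The one slip is that $\{u_i=0\}$ and $\{v_i=0\}$ cut out the \emph{same} global surface in $W_1$, so there are only two image surfaces rather than four; the four principal embeddings differ as maps (namely $j_1$ composed with the symmetries $s_0$ and $s_1$ of Notation \ref{pis}), which is what your chart-by-chart computation in fact uses.
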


From the viewpoint of Poisson structures, $W_2$ is the best of our local  Calabi--Yaus, since it 
is the only one that admits a nondenegerate Poisson structure, see \Cref{degeneracy2}, 
although this comes as no surprise since $W_2 \simeq T^*\mathbb P^1\times \mathbb C$ 
is a product of symplectic manifolds. 

\begin{lemma*}[\ref{W2gens}]
The space  $M_2 $
of holomorphic bivector fields  on $W_2$ 
has the following structure as a module over global holomorphic functions:
$$M_2 = \langle e_1,e_2,e_3,e_4, e_5 \rangle / \langle u_1e_3-zu_1e_1, 
 u_2e_5-zu_2e_3-2zu_2e_2 \rangle \text{ .} $$
\end{lemma*}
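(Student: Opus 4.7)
The strategy is a direct chart computation on the standard cover of $W_2$ by two affine opens $U$ and $V$ with coordinates $(z,u_1,u_2)$ and $(\xi,v_1,v_2)$ respectively, glued on the overlap by
\[
\xi = 1/z, \qquad v_1 = z^2 u_1, \qquad v_2 = u_2,
\]
the twists reflecting the summands $\mathcal{O}(-2)$ and $\mathcal{O}(0)$. First I would compute the induced transformation of the three coordinate wedge-generators, obtaining
\[
\partial_z \wedge \partial_{u_1} = -\partial_\xi \wedge \partial_{v_1}, \quad
\partial_z \wedge \partial_{u_2} = -\xi^2 \partial_\xi \wedge \partial_{v_2} + 2\xi v_1\, \partial_{v_1}\wedge\partial_{v_2}, \quad
\partial_{u_1}\wedge\partial_{u_2} = \xi^{-2}\, \partial_{v_1}\wedge\partial_{v_2}.
\]

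Next, I write a general bivector on $U$ as $\sigma = P\,\partial_z\wedge\partial_{u_1} + Q\,\partial_z\wedge\partial_{u_2} + R\,\partial_{u_1}\wedge\partial_{u_2}$ and impose holomorphicity of its $V$-chart expression monomial-by-monomial. For a monomial $z^i u_1^j u_2^k$ this yields the degree bounds $i \leq 2j$ for terms in $P$, $i \leq 2j + 2$ for terms in $Q$, and $i \leq 2j - 2$ for terms in $R$. The essential subtlety is that the $\partial_{v_1}\wedge\partial_{v_2}$ coefficient mixes contributions of $Q$ and $R$: a monomial $z^{2j+2} u_1^j u_2^k$ in $Q$ produces a $\xi^{-1}$-pole that must be cancelled by the companion monomial $-2\,z^{2j+1} u_1^{j+1} u_2^k$ in $R$. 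This coupling is the source of non-triviality in the module structure.

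From this classification I read off five explicit global bivectors
\[
e_1 = \partial_z\wedge\partial_{u_2}, \qquad e_3 = z\,\partial_z\wedge\partial_{u_2}, \qquad e_5 = z^2\,\partial_z\wedge\partial_{u_2} - 2zu_1\,\partial_{u_1}\wedge\partial_{u_2},
\]
together with $e_4 = \partial_z\wedge\partial_{u_1}$ and $e_2$ proportional to $u_1\,\partial_{u_1}\wedge\partial_{u_2}$ (sign fixed so that the second stated relation holds). Each is verified global by the transition formulas above. The first relation is immediate: both $u_1 e_3$ and $zu_1\,e_1$ equal $zu_1\,\partial_z\wedge\partial_{u_2}$. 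The second relation reads component-by-component: the $\partial_z\wedge\partial_{u_2}$ coefficient of $u_2 e_5$ matches that of $zu_2\,e_3$, while the $R$-correction $-2zu_1 u_2\,\partial_{u_1}\wedge\partial_{u_2}$ built into $u_2 e_5$ is absorbed by $2zu_2\,e_2$ (the apparently non-global coefficients $zu_2$ are meaningful because the total combination clears through multiplication by $u_1$ into a genuine $\mathcal{O}(W_2)$-linear identity).

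To finish I would verify generation and completeness of the relations. The $P$-sector is a free rank-one $\mathcal{O}(W_2)$-module generated by $e_4$; the pure $R$-sector (with $R\in u_1 \mathcal{O}(W_2)$) is free of rank one on $e_2$; and any admissible $Q$ is an $\mathcal{O}(W_2)$-linear combination of $1, z, z^2$ (i.e.\ of $e_1, e_3, e_5$) according to whether $i - 2j \in \{0,1,2\}$ for each monomial, with the forced $R$-correction tracking automatically because it is built into $e_5$. A syzygy count among $\{e_1, e_3, e_5\}$ in the $Q$-sector produces precisely the two stated relations, modulo those subsumed in the free $P$- and $R$-summands. The main technical obstacle is the bookkeeping of the $R$-correction: it ties the $Q$- and $R$-generators together, forces the second relation, and is the reason $M_2$ fails to be free.
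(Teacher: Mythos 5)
Your proposal follows essentially the same route as the paper: both compute global sections of $\Lambda^2 TW_2$ from the transition matrix on the two-chart cover $U\cup V$, extract the five generators, and obtain the two relations by clearing the syzygies $e_3 = ze_1$ and $e_5 = ze_3 + 2ze_2$ with a global function. Your monomial degree bounds ($i\le 2j$, $i\le 2j+2$, $i\le 2j-2$) and the $Q$--$R$ pole cancellation are just a more explicit organization of the paper's formal-neighborhood computation, and you correctly flag that the second relation as literally stated has non-global coefficients $zu_2$ and only becomes a genuine $\mathcal{O}(W_2)$-linear identity after clearing by $u_1$ --- a point the paper itself glosses over (its proof says ``multiply the equations by $u_1$'' yet writes that relation with $u_2$).
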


By \Cref{iso1-5}, $e_1$ and $e_5$ give isomorphic Poisson structures, whereas 
the others are distinct, giving interesting symplectic foliations, as follows.

\begin{theorem*}[\ref{foliations2}]
The symplectic foliations on $W_2$ 
have $0$-dimensional leaves consisting of single points over each 
of their corresponding  degeneracy loci described in \Cref{degeneracy2},
and their generic leaves, which are $2$-dimensional, are as follows:
\begin{itemize}
\item surfaces of constant $u_1$  for $e_1$ and $e_3$,  one of them   isomorphic to $\mathbb P^1 \times \mathbb C$.
\item isomorphic to $\mathbb C-\{0\} \times \mathbb C$ for $e_2$ (contained in the fibers of the projection to $\mathbb P^1$).
\item isomorphic to the surface $Z_2$ and cut out by $u_2=v_2$ constant for $e_4$.
\end{itemize}
\end{theorem*}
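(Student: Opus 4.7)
The plan is to compute the Casimir algebra of each generator and take its level sets on the complement of the degeneracy locus as the $2$-dimensional leaves. By \Cref{iso1-5} the foliations of $e_1$ and $e_5$ agree, so only $e_1,e_2,e_3,e_4$ need to be analyzed. The $0$-dimensional leaves over the degeneracy loci in \Cref{degeneracy2} follow immediately from \Cref{leaves}, since on a complex threefold a holomorphic Poisson bivector has rank $0$ or $2$.

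I would work in the local coordinates $(z,u_1,u_2)$ on the $U$ chart and $(\xi,v_1,v_2)$ on the $V$ chart, with transitions $\xi=1/z$, $v_1=z^2u_1$, $v_2=u_2$, writing each $e_i$ as a linear combination of $\partial_z\wedge\partial_{u_1}$, $\partial_z\wedge\partial_{u_2}$, $\partial_{u_1}\wedge\partial_{u_2}$. A holomorphic function $f$ is a Casimir precisely when the anchor $e_i^\sharp(df)$ vanishes, a first-order linear system solved directly: $u_2=v_2$ is a global Casimir of $e_4$, the coordinate $z$ (equivalently $\xi$ on $V$) is a local Casimir of $e_2$, and $u_1$ is a local Casimir of $e_1$ and $e_3$. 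The candidate leaves are then the level sets of these Casimirs, and it remains to identify their isomorphism types and to verify their extensions across $\{\xi=0\}$.

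For $e_4$ the level set $\{u_2=c\}\subset W_2$ is cut out globally, and under the product structure $W_2\cong Z_2\times\mathbb C$ coming from $\mathcal O_{\mathbb P^1}(0)\cong\mathcal O_{\mathbb P^1}$ this slice is exactly $Z_2$, since the restriction of $v_1=z^2u_1$ is precisely the gluing data of $\mathcal O_{\mathbb P^1}(-2)$. For $e_2$ each leaf lies in a single fiber $\mathbb C^2_{u_1,u_2}$ of $W_2\to\mathbb P^1$, and removing the degeneracy locus yields $\mathbb C^\ast\times\mathbb C$. For $e_1$ and $e_3$ the generic level set $\{u_1=c\}$ with $c\neq 0$ is a plane $\mathbb C^2$ in the $U$ chart that does not extend to $V$, because $v_1=cz^2$ diverges as $\xi\to 0$, whereas the distinguished level set $\{u_1=0\}=\{v_1=0\}$ extends globally to the zero section of $\mathcal O_{\mathbb P^1}(-2)$ times the $\mathcal O_{\mathbb P^1}(0)$ fiber, hence to $\mathbb P^1\times\mathbb C$. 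The main obstacle is this global extension analysis for $e_1,e_2,e_3$, whose Casimirs exist only locally; the case of $e_4$ is the cleanest since its Casimir is global and the identification of the leaf with $Z_2$ then reduces to comparing transition functions.
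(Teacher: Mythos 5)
Your proposal is correct and follows essentially the same route as the paper: the paper's argument consists precisely of computing the Casimir algebras $f(u_1),\ f(z),\ f(u_1),\ f(u_2)$ for $e_1,\dots,e_4$ via $\ker[e_i,\cdot]$ (Lemmata \ref{coh-beta0}--\ref{coh-gamma}), then combining these level sets with \Cref{leaves} and the degeneracy loci of \Cref{degeneracy2}, which is exactly your plan. Your additional chart-by-chart identification of the leaf types (including the extension analysis across $\{\xi=0\}$) only makes explicit what the paper leaves implicit.
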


We depict their degeneracy loci and Casimir functions in the following table.

 \begin{center}
\begin{tabular}{c|c|l}
\multicolumn{3}{c}{\sc $W_2$ Poisson structures }\\
\multicolumn{3}{c}{} \\
$\pi$ & degeneracy & Casimir \\ \hline
$e_1$ & \ccfan & $ f(u_1)$\\ \hline
$e_2$  & \Zzero& $ f(z)$ \\ \hline
$e_3$ &  \ccfan $\cup$ \cfan & $f(u_1)$ \\ \hline
$e_4$  & $\emptyset$ & $f(u_2)$ 
\end{tabular}
\end{center}

\vspace{3mm}

We then continue onto the case of $W_3$, obtaining:

\begin{lemma*}[\ref{W3gens}] 
The space  $M_3 $
of holomorphic bivector fields on $W_3$ 
has the following structure as a module over global holomorphic functions:
$$M_3= \mathbb C \langle e_1, \dots, e_{13} \rangle / R$$
     with the set of relations $R$ is the ideal generated by   the expressions  
     $$\begin{array}{l}
u_1  e_2  - u_1u_2 e_1 \\
u_1  e_{10} -u_1 u_2 e_3 \\
u_1e_{13} - u_1u_2e_7 \\
\end{array}
\quad\quad \begin{array}{l}
zu_1e_{12} - u_1u_2 e_6 \\
zu_1e_{13} - u_1u_2e_8  \\
u_1e_{11} - zu_1 e_{10} \\
\end{array}
\quad \begin{array}{l}
u_1 e_4 - z u_1e_3   \\
 u_1  e_5 - zu_1e_4  \\
u_1e_8 - zu_1e_7  \\
\end{array} \quad 
\begin{array}{l}
u_1 e_6 - zu_1 e_5 - 3z^2u_1e_1 \\
u_1e_9 - zu_1e_8 + zu_1e_2 \\
u_1e_{12} - zu_1e_{11} - 3zu_1e_1 .\\
\end{array}$$
\end{lemma*}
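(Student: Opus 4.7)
The plan is to repeat the chart-by-chart computation carried out in the proofs of \Cref{biW1} and \Cref{W2gens}, now for $W_3=\Tot(\mathcal O_{\mathbb P^1}(-3)\oplus\mathcal O_{\mathbb P^1}(1))$. Cover $W_3$ by the two standard charts $U$ and $V$ with coordinates $(z,u_1,u_2)$ and $(\xi,v_1,v_2)$, glued via $\xi=1/z$, $v_1=z^3 u_1$, $v_2=z^{-1}u_2$. Any holomorphic bivector restricted to $U$ has the form $\sigma=P_1\,\partial_z\wedge\partial_{u_1}+P_2\,\partial_z\wedge\partial_{u_2}+P_3\,\partial_{u_1}\wedge\partial_{u_2}$ with $P_k\in\mathbb C[z,u_1,u_2]$, and the task is to determine when such an expression extends holomorphically across $V$.

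First I would compute the chain-rule transformation of the elementary wedges. From $\partial_{u_1}=z^3\partial_{v_1}$, $\partial_{u_2}=z^{-1}\partial_{v_2}$ and $\partial_z=-\xi^2\partial_\xi+3\xi v_1\partial_{v_1}-\xi v_2\partial_{v_2}$, each wedge acquires explicit factors of $z$ (both positive and negative) in $V$-coordinates. Rewriting $\sigma$ in the $V$-chart and demanding polynomiality yields monomial-by-monomial admissibility inequalities of the form $i+l\leq 3j+c_k$ on the exponents $(i,j,l)$ in $P_k$, where the shift $c_k$ depends on the wedge and on the correction terms in $\partial_z$. Tabulating the minimal admissible monomial bivectors, i.e.\ those not of the form $f\cdot e$ for a non-constant global function $f\in\mathcal O(W_3)$ and a simpler admissible bivector $e$, produces exactly the thirteen generators $e_1,\dots,e_{13}$.

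The twelve relations are then read off from the ambiguities in writing a single bivector as an $\mathcal O(W_3)$-combination of those generators. The crucial point is that $\mathcal O(W_3)$ consists only of monomials $z^iu_1^ju_2^l$ with $i+l\leq 3j$, a proper subring of $\mathbb C[z,u_1,u_2]$; products such as $u_2\cdot e_i$ or $z\cdot e_i$ may fail to be global while $u_1u_2\cdot e_i$ or $zu_1\cdot e_i$ are, forcing syzygies of the shape $u_1 e_j=u_1u_2\,e_i$, $u_1 e_j=zu_1\,e_i$ and $zu_1 e_j=u_1u_2\,e_i$, which account for nine of the listed relations. The three inhomogeneous relations in the last column, with correction terms $3z^2u_1\,e_1$, $-zu_1\,e_2$ and $3zu_1\,e_1$, trace back to the fiber-twisting contributions $3\xi v_1\partial_{v_1}$ and $-\xi v_2\partial_{v_2}$ in the transformation of $\partial_z$, which mix a ``higher'' generator into lower ones upon crossing charts. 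The main obstacle is the bookkeeping: a finite but delicate verification of completeness of both lists, which I would organize as an induction on the bidegree $(j,l)$ once the admissible monomials have been enumerated.
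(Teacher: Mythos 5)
Your proposal follows essentially the same route as the paper's proof: write a bivector on the $U$-chart, impose holomorphy of its $V$-chart expression monomial by monomial to enumerate a minimal set of generators over $\mathcal O(W_3)$, and then obtain the syzygies by multiplying the non-global identities (which involve the non-global functions $z$ and $u_2$) by $u_1$, with the three inhomogeneous relations coming from the off-diagonal terms in the transformation of $\partial_z$. The only cosmetic differences are that the paper organizes the holomorphy check via the transition matrix of $\Lambda^2 TW_3$ and formal neighbourhoods of the zero section rather than your chain-rule/bidegree induction, and that the coefficients should be taken as convergent power series rather than polynomials; the computation is otherwise identical.
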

 
Subsequently, we describe some features of the symplectic foliations
corresponding to the generating Poisson structures on $W_3$.

\begin{theorem*}[\ref{W3foliation}]
The symplectic foliations on $W_3$ 
have 0-dimensional  leaves consisting of single points over each 
of their corresponding  degeneracy loci described in 
the proofs of Lemmata \ref{alphas}, \ref{betas}, \ref{gammas}, and their generic  leaves, which  are $2$-dimensional, 
are as follows:

\begin{itemize}
\item  Isomorphic to $\mathbb C^*\times \mathbb C$ for $e_1$.
\item  Isomorphic to $\mathbb C^*\times \mathbb C^*$ for $e_2$.
\item  Surfaces of constant $u_1$, for $e_3, e_4,e_5, e_{10}, e_{11}$ and $e_{13}$. 
\item Surfaces of constant $u_2$,  for $e_7$ and $e_8$.
\end{itemize}
\end{theorem*}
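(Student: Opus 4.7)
The strategy is generator-by-generator: for each of the ten holomorphic bivector fields $e_1,e_2,e_3,e_4,e_5,e_7,e_8,e_{10},e_{11},e_{13}$ from Lemma \ref{W3gens} I compute the anchor map $e_i^\sharp:T^*W_3 \to TW_3$, identify its kernel on $1$-forms (the Casimir differentials), and then describe the generic level sets of a complete set of Casimir functions. By \Cref{leaves}, combined with the fact recalled in the introduction that a holomorphic Poisson structure on a complex threefold has pointwise rank $0$ or $2$, the generic leaves are the $2$-dimensional connected components of these level sets on the complement of the degeneracy locus $D(e_i)$, which has been computed in Lemmata \ref{alphas}, \ref{betas}, and \ref{gammas}.

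Concretely, I fix the two standard charts $U=\{(z,u_1,u_2)\}$ and $V=\{(\xi,v_1,v_2)\}$ on $W_3$, related by $\xi=1/z$, $v_1=z^3 u_1$, $v_2=z^{-1}u_2$. For each $e_i$ I pull the explicit local expression used in the proof of Lemma \ref{W3gens} into one of these charts and contract with $du_1$, $du_2$, and $dz$. For the six generators $e_3,e_4,e_5,e_{10},e_{11},e_{13}$, the contraction with $du_1$ vanishes identically in $U$, and a routine check via the transition rule shows that $u_1$ defines a global foliation; this gives the surfaces of constant $u_1$. The same argument with $du_2$ handles $e_7$ and $e_8$.

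The case of $e_1$ and $e_2$ is more delicate, since it is here that the specific biholomorphism types $\mathbb{C}^*\times\mathbb{C}$ and $\mathbb{C}^*\times\mathbb{C}^*$ appear. After identifying a Casimir in the $U$-chart, I describe a generic leaf $L$ by its restrictions $L\cap U$ and $L\cap V$, each cut out in the affine chart by the Casimir equation. The punctured-line factors arise from the transitions $v_i = z^{n_i} u_i$: requiring both $u_i$ and $v_i$ to take specified nonzero values forces $z\in\mathbb{C}^*$, so that the leaf's image in the base $\mathbb{P}^1$ is $\mathbb{P}^1\setminus\{0,\infty\}\cong\mathbb{C}^*$, while an analogous condition on the remaining fiber coordinate discriminates between a $\mathbb{C}$ factor (for $e_1$) and a $\mathbb{C}^*$ factor (for $e_2$).

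The main obstacle I expect is precisely this global identification of the leaves for $e_1$ and $e_2$: one must verify that the bivector is tangent to the proposed surfaces on \emph{both} charts simultaneously, that the restriction across the overlap $U\cap V$ really glues to a smooth surface of the stated type, and that no additional Casimir further foliates these leaves. For the generators in the third and fourth bullets this reduces to a short unwinding of the expressions from Lemma \ref{W3gens}, but for $e_1$ and $e_2$ it requires keeping careful track of the powers of $z$ and $\xi$ introduced by the transition functions.
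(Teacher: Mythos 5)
Your overall strategy is the same as the paper's: the paper's proof is a one-line combination of the Casimir computations in Lemma \ref{3classes} with Remark \ref{leaves}, and your anchor-map/kernel computation is exactly how those Casimirs are found. The bullets for $e_3,e_4,e_5,e_{10},e_{11},e_{13}$ (constant $u_1$) and for $e_7,e_8$ (constant $u_2$) are handled correctly.

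However, your analysis of $e_1$ and $e_2$ contains a genuine error. Since $e_1=u_1\,\partial_1\wedge\partial_2$ and $e_2=u_1u_2\,\partial_1\wedge\partial_2$ have no $\partial_0=\partial/\partial z$ component, the image of the anchor map never contains the base direction; equivalently, the Casimirs are the functions of $z$ alone (Lemma \ref{3classes}), so $z$ is constant on every leaf. A generic leaf is therefore contained in a \emph{single} fiber $\{z=c\}\cong\mathbb C^2_{u_1,u_2}$ of the projection to $\mathbb P^1$; its image in the base is one point, not $\mathbb P^1\setminus\{0,\infty\}$. The $\mathbb C^*$ factors in the statement arise from deleting the degeneracy divisor from that fiber: for $e_1$ the leaf is $\{z=c,\ u_1\neq 0\}\cong\mathbb C^*\times\mathbb C$ (punctured $u_1$-line times the $u_2$-line), and for $e_2$ it is $\{z=c,\ u_1\neq 0,\ u_2\neq 0\}\cong\mathbb C^*\times\mathbb C^*$. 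Your proposed mechanism --- ``requiring both $u_i$ and $v_i$ to take specified nonzero values forces $z\in\mathbb C^*$, so that the leaf's image in the base is $\mathbb C^*$'' --- would produce a leaf transverse to the fibers, which directly contradicts $z$ being a Casimir; it describes the wrong surfaces. (Compare the analogous statement for $W_2$ in Theorem \ref{foliations2}, where the leaves of $u_1\,\partial_1\wedge\partial_2$ are explicitly noted to be contained in the fibers of the projection to $\mathbb P^1$.) With this correction the rest of your argument goes through.
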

 
Geometrically, these structures can be obtained from surface embeddings.

\begin{theorem*}[\ref{emb3}]
The embeddings of  Poisson surfaces $j_0(\mathbb C^2, \pi_0),$ $j_1(Z_3,\pi_i)$ with $i=0,1,2$ and $j_2(Z_{-1},\pi_i)$ 
with $i=0,1,2,3$
 generate 
all Poisson structures on $W_3$.
\end{theorem*}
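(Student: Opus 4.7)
The plan is to realize every generator $e_i$ of $M_3$ listed in \Cref{W3gens} as the image of a Poisson bivector on a surface under one of the three embeddings, and then invoke \Cref{subm} to conclude that every Poisson structure on $W_3$ (being a combination over global functions of these generators, subject to the relations in $R$) sits in the module spanned by the surface contributions.

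First I would fix explicit coordinates. On the chart $U$ with coordinates $(z,u_1,u_2)$ and the chart $V$ with $(\xi,v_1,v_2)$ related by $\xi=1/z$, $v_1=z^3 u_1$, $v_2=z^{-1}u_2$, the three natural embeddings are: $j_0 \colon \mathbb C^2 \hookrightarrow W_3$ as the fiber $\{z=\mathrm{const}\}$ over a point of $\mathbb P^1$; $j_1 \colon Z_3 \hookrightarrow W_3$ as the sub-line-bundle $\{u_2=0\}$; and $j_2 \colon Z_{-1} \hookrightarrow W_3$ as the sub-line-bundle $\{u_1=0\}$. Each of these is a Poisson submanifold for suitable bivectors on $W_3$ (namely those whose bivector field lies in $\Lambda^2 T\iota(S)$ along the image). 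Conversely, for any bivector $\pi$ on the surface $S$, extending by zero in the normal direction yields a global bivector on $W_3$ once one checks that the extension glues across the chart transition. I would therefore list the Poisson generators $\pi_i$ on each surface from \cite{BG1,BG2,BeG}: on $\mathbb C^2$ only $\pi_0 = \partial_{u_1}\wedge\partial_{u_2}$; on $Z_3$ the three structures $\pi_i = z^i\,\partial_z\wedge\partial_{u_1}$ for $i=0,1,2$ (corresponding to the three independent bivectors on $Z_3$ modulo its function ring); and on $Z_{-1}$ the four structures $\pi_i$, indexed by the four independent bivectors on $Z_{-1}$.

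Next I would compute, in coordinates, the pushforward of each $(\mathrm{surface},\pi_i)$ to $W_3$ and identify it with one of the generators $e_j$. From the fiber embedding $j_0$, $\pi_0$ extends to $\partial_{u_1}\wedge\partial_{u_2}$, identified with $e_1$ (the unique bivector purely in the fiber directions). The three Poisson structures on $Z_3$ sitting inside $\{u_2=0\}$ give three bivectors of the form $z^i\,\partial_z\wedge\partial_{u_1}$ on $W_3$, which should be matched with three of the generators $e_3,e_4,e_5$ (or alternatively $e_6$, accounting for the relation $u_1e_6-zu_1e_5-3z^2u_1e_1$, which reflects precisely the fact that the section $z^3\partial_z\wedge\partial_{u_1}$ on $Z_3$ extends globally only after correcting by a term in the fiber direction). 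The four structures from $Z_{-1}$ in $\{u_1=0\}$ give bivectors of the form $z^i\,\partial_z\wedge\partial_{u_2}$ for $i=0,1,2,3$, which should match the remaining generators $e_7,e_8,e_{10},e_{11},e_{13}$ (with the relations in $R$ involving $e_9,e_{12},e_{13}$ encoding the twist needed to extend $\mathcal O_{\mathbb P^1}(1)$-valued vector fields across charts).

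After this identification, I would close the argument by invoking \Cref{subm}: since every Poisson structure is a sum of holomorphic-function multiples of the $e_j$'s, and each $e_j$ is itself a (twist of an) embedded surface Poisson structure, the module of Poisson structures on $W_3$ is generated by the modules coming from these eight surface-Poisson pairs.

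The main obstacle will be the bookkeeping for generators in the $Z_{-1}$-direction: because $u_2$ is a section of a positive bundle while $u_1$ lies in a negative one, extending a bivector like $z^3\partial_z\wedge\partial_{u_2}$ from $Z_{-1}$ across the transition forces corrective terms proportional to $u_2\partial_{u_2}$ and $u_1\partial_{u_1}$, precisely explaining the three most subtle relations $u_1e_6-zu_1e_5-3z^2u_1e_1$, $u_1e_9-zu_1e_8+zu_1e_2$ and $u_1e_{12}-zu_1e_{11}-3zu_1e_1$ in \Cref{W3gens}. Verifying that the geometric extensions exactly reproduce these relations (with the correct coefficients $\pm 3$, $\pm 1$ coming from the degrees of the line bundles) is the step where care must be taken; once this matching is established, the statement follows.
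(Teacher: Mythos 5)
Your overall strategy is the same as the paper's: realize each generator $e_j$ of $M_3$ as the pushforward of a surface Poisson structure under $j_0$, $j_1$ or $j_2$ (possibly multiplied by a function or corrected by a term in the normal direction), and then conclude via the module structure of \Cref{W3gens} together with \Cref{subm}. However, the concrete identifications you propose are wrong, and since the entire content of the theorem is precisely this matching, the gap is substantive. You have swapped the roles of the two sub-line-bundles: the generators $e_3,e_4,e_5$ are $z^i\,\partial_{u_2}\wedge\partial_z$ ($i=0,1,2$), i.e.\ multiples of $b_1$, hence tangent to $\{u_1=0\}\simeq Z_{-1}$; they arise from $j_2(Z_{-1},\pi_i)$ with $\pi_i=z^i\partial_z\wedge\partial_u$, with $e_6$ the corrected extension of $\pi_3$. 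Conversely $e_7,e_8$ (and, with correction, $e_9$) are $z^iu_1\,\partial_z\wedge\partial_{u_1}$, tangent to $\{u_2=0\}\simeq Z_3$, and arise from $j_1(Z_3,\pi_i)$ --- whose generators are $z^iu\,\partial_z\wedge\partial_u$, $i=0,1,2$ (see \Cref{Zs}), \emph{not} $z^i\partial_z\wedge\partial_{u_1}$ as you write: dropping the factor of $u$ matters, since $z^i\partial_z\wedge\partial_{u_1}$ is not a global bivector on $W_3$ for any $i$. Your assignment of the $Z_{-1}$ structures to $e_7,e_8,e_{10},e_{11},e_{13}$ is therefore incorrect, and it also misses that $e_{10},e_{11}$ (and $e_{12}$) are obtained from the $Z_{-1}$ structures only after multiplying by the function $g(z,u)=u$, which is the mechanism the paper uses.

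A second error is the claim that for any surface bivector, ``extending by zero in the normal direction yields a global bivector on $W_3$ once one checks that the extension glues.'' It does not glue in the key case: $\partial_{u_1}\wedge\partial_{u_2}$ transforms by the factor $z^2=\xi^{-2}$ and is not holomorphic at $\xi=0$. The generators coming from the fiber are $e_1=u_1\,\partial_{u_1}\wedge\partial_{u_2}$ and $e_2=u_1u_2\,\partial_{u_1}\wedge\partial_{u_2}$, i.e.\ the pushforwards of $u\,\pi_0$ and $uv\,\pi_0$, not of $\pi_0$ itself. Since you defer the verification of the matching to a later ``careful step'' and the tentative matching you do give is the wrong one, the proposal as written does not establish the statement; carrying out the coordinate computations (as the paper does, chart by chart, using the transition matrix of $\Lambda^2TW_3$) is exactly what is needed and would have revealed the swap.
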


We finish by showing that except for $e_6,e_9, e_{12}$, the Poisson structures $e_i$
are all pairwise non-isomorphic, which can be seen from their degeneracy loci.

\begin{center}
\begin{tabular}{c|c|c}
\multicolumn{3}{c}{\sc $W_3$ Poisson structures }\\
\multicolumn{3}{c}{} \\
$\pi$ & degeneracy & Casimir \\ \hline
$e_1$   & \Zmone $\cup$ \ccfan & $f(z)$ \\ \hline
$e_2$ & \Zmone $\cup$ \Zthree & $ f(z)$ \\ \hline
$e_3$  & \ccfan & $ f(u_1)$ \\ \hline
$e_4$  &\ccfan $\cup$ \ccfan & $f(u_1)$ \\ \hline
$e_5$  & \ccfan $\cup$ \cfan & $ f(u_1)$ \\ \hline
$e_7$ & \Zmone $\cup$ \ccfan & $ f(u_2)$ \\ \hline
$e_8$ & \Zmone $\cup$ \ccfan $\cup$ \cfan & $ f(u_2)$ \\ \hline
$e_{10}$ & \Zthree $\cup$ \ccfan & $ f(u_1)$ \\ \hline
$e_{11}$ & \Zmone $\cup$ \ccfan $\cup$ \cfan  & $f(u_1)$ \\ \hline
$e_{13}$   & \Zmone $\cup$ \Zthree  & $ f(u_2)$
\end{tabular}
\end{center}

\begin{remark}
\Cref{biW1} (resp.\ \ref{W2gens}, \ref{W3gens}) shows that
the space of holomorphic bivector fields on $W_1$ (resp.\ $W_2$, $W_3$) is generated
as a module over global holomorphic functions by $4$ (resp.\ $5$,  $13$) holomorphic bivectors.
Moreover, we find that $\mathbb{C}$-linear combinations of the basis vectors are integrable,
and by \Cref{subm} any multiples thereof by holomorphic functions are integrable, too. In the
case of $W_1$, \Cref{t1} describes the space of integrable bivectors as the kernel of
an explicit differential operator.
\end{remark}

However, we note that general combinations of the  basis vectors with global functions as coefficients 
may not be integrable. For example, on $W_1$ the expression $zu_2e_1+e_3$ gives a nonintegrable bivector field, 
despite the fact that both $e_1$ and $e_3$ are integrable.

\section{Vector fields on \texorpdfstring{$W_k$}{W\_k}}

\begin{definition}\label{WKdef}For integers $k_1$ and $k_2$, we set
\[
W_{k_1,k_2} = \Tot (\mathcal{O}_{\mathbb{P}^1}(-k_1) \oplus \mathcal{O}_{\mathbb{P}^1}(-k_2)) \text{ .}
\]
The complex manifold structure  can be described by gluing the open sets 
$$U = \mathbb{C}^3_{\{z,u_1,u_2\}}  \quad \mbox{and} \quad  V = \mathbb{C}^3_{\{\xi,v_1,v_2\}}$$ 
by the relation
\begin{equation}\label{canonical}
(\xi, v_1, v_2) = (z^{-1}, z^{k_1} u_1, z^{k_2} u_2)
\end{equation}
whenever $z$ and $\xi$ are not equal to 0.
We call \eqref{canonical} the canonical coordinates for $W_{k_1,k_2}$.
\end{definition}

\begin{lemma}
The threefold $W_{k_1,k_2}$ is Calabi--Yau if and only if $k_1 + k_2 = 2$.
\end{lemma}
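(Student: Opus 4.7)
The plan is to compute the canonical bundle $K_{W_{k_1,k_2}}$ directly from the gluing data of \Cref{canonical} and show that it is isomorphic to the pullback of $\mathcal O_{\mathbb P^1}(k_1+k_2-2)$, which is trivial precisely when $k_1+k_2=2$.

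First I would exhibit the natural candidate for a trivialising top-form on each chart. On $U$ take $\Omega_U \ce dz\wedge du_1\wedge du_2$ and on $V$ take $\Omega_V \ce d\xi\wedge dv_1\wedge dv_2$; both are nowhere-vanishing holomorphic $3$-forms on their respective charts. Differentiating the relations $\xi = z^{-1}$ and $v_i = z^{k_i}u_i$ gives $d\xi = -z^{-2}dz$ and $dv_i = k_i z^{k_i-1}u_i\,dz + z^{k_i}du_i$. Since $d\xi\wedge dz = 0$, only the $z^{k_i}du_i$ term of each $dv_i$ contributes to $d\xi \wedge dv_1\wedge dv_2$, and a direct wedge computation on the overlap $U\cap V$ yields
$$\Omega_V \;=\; -\,z^{k_1+k_2-2}\,\Omega_U\text{ .}$$
Hence the transition cocycle for $K_{W_{k_1,k_2}}$ between the two charts is (up to sign) $z^{k_1+k_2-2}$, identifying $K_{W_{k_1,k_2}} \cong \pi^*\mathcal O_{\mathbb P^1}(k_1+k_2-2)$, where $\pi\colon W_{k_1,k_2}\to\mathbb P^1$ is the bundle projection.

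As an independent sanity check I would verify this abstractly from the tangent sequence $0\to \pi^*E \to TW_{k_1,k_2}\to \pi^*T\mathbb P^1\to 0$ associated to $E=\mathcal O(-k_1)\oplus\mathcal O(-k_2)$; taking determinants gives $K_{W_{k_1,k_2}} = \pi^*(K_{\mathbb P^1}\otimes \det E^{-1}) = \pi^*\mathcal O(k_1+k_2-2)$, in agreement with the coordinate calculation.

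To conclude, I would note that $\pi$ is a homotopy equivalence (the total space deformation-retracts to its zero section), so $\operatorname{Pic}(W_{k_1,k_2}) \cong \operatorname{Pic}(\mathbb P^1)\cong\mathbb Z$, and $\pi^*\mathcal O_{\mathbb P^1}(n)$ is trivial if and only if $n=0$. Therefore $K_{W_{k_1,k_2}}\cong \mathcal O_{W_{k_1,k_2}}$ if and only if $k_1+k_2-2=0$, which is the claim. There is no real obstacle here; the only point requiring care is a bookkeeping check in the wedge computation to confirm that it is genuinely the exponent $k_1+k_2-2$ and not, say, $k_1+k_2$ or $k_1+k_2-1$ that appears.
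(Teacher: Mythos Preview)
Your proof is correct and follows essentially the same approach as the paper: both compute the transition function of the canonical bundle from the gluing data and find it to be $-z^{k_1+k_2-2}$. Your version is simply more detailed, supplying the explicit wedge computation, the tangent-sequence sanity check, and the Picard-group argument for the ``only if'' direction that the paper leaves implicit.
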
 
 
\begin{proof}
The canonical bundle is given by the transition 
$-z^{k_1+k_2-2}$, 
so it is trivial if and only if $k_1 + k_2 = 2$.
\end{proof} 

\begin{notation}
We denote by $W_k$ the Calabi--Yau threefold 
\[
W_k\ce W_{k,-k+2} = \Tot (\mathcal{O}_{\mathbb{P}^1}(-k) \oplus \mathcal{O}_{\mathbb{P}^1}(k+2)) \text{ .}
\]
\end{notation}

Let $U \subset W_k$ be our usual chart with coordinates $\{z, u_1, u_2\}$.
As a module over the ring of functions $H^0(U; \mathcal{O})$, the module of
global sections of vector fields over $U$, $H^0(U; \; TU)$ is spanned by the
coordinate partial derivatives, which we relabel for convenience:
\[ \pd{z}    \equiv \frac{\partial}{\partial x^0} \equiv \partial_0 \text{ ,\quad}
   \pd{u_1}  \equiv \frac{\partial}{\partial x^1} \equiv \partial_1 \text{ , and }
   \pd{u_2}  \equiv \frac{\partial}{\partial x^2} \equiv \partial_2 \text{ .}
\]
The exterior powers are spanned by the appropriate wedge products:
\begin{eqnarray*}
  H^0(U; \; \Lambda^1 TU) & = & \bigl\langle \left\{ \partial_i \right\}_{i=0}^{2} \bigr\rangle \\
  H^0(U; \; \Lambda^2 TU) & = & \left\langle \left\{ b_0 \equiv \partial_1 \wedge \partial_2 \text{ , \ }
                                                     b_1 \equiv \partial_2 \wedge \partial_0 \text{ , \ }
                                                     b_2 \equiv \partial_0 \wedge \partial_1 \right\} \right\rangle \\
  H^0(U; \; \Lambda^3 TU) & = & \left\langle \left\{ \partial_0 \wedge \partial_1 \wedge \partial_2 \right\} \right\rangle 
\end{eqnarray*}
We are interested in bivectors, i.e.\ elements of $H^0(U; \; \Lambda^2 TU)$.
We write a bivector field as
\begin{equation}\label{eq.defq}
  q = q^i b_i = \frac{1}{2} q^i \varepsilon_{i}^{jk} \partial_j \wedge \partial_k \text{ ,}
\end{equation}
where the coefficients $q^i$ are functions on $U$.
We are using Einstein summation convention throughout,
and we write $f_{,i}$ for $\frac{\partial f}{\partial x^i}$.


We collect a few useful identities involving Lie brackets, and some
preliminary expressions used to compute the Schouten--Nijenhuis
brackets. Let $X$, $Y$ be vector fields and $f$, $g$ be functions. Then:
\begin{itemize}
\item
  For the coordinate partial derivatives, the Lie bracket vanishes: $[\partial_j, \partial_k] = 0$ for all $j$, $k$.
\item
  $[X, g Y] = X(g)Y + g [X,Y]$, so in particular, $[\partial_j, g \partial_k] = \frac{\partial g}{\partial x^j}\partial_k$
  and $[f \partial_j, \partial_k] = -\frac{\partial f}{\partial x^k}\partial_j$.
\item
  The SN-bracket of two bivectors is commutative and results in a degree-$3$ trivector,
  i.e.\ a scalar multiple of $\partial_0 \wedge \partial_1 \wedge \partial_2$. On basis elements, it is given by:
  \begin{align}\label{eqn.snbasic}
    \bigl[ f \, \partial_j \wedge \partial_k, g \, \partial_m \wedge \partial_n \bigr] = & \ 
    \bigl[ f \, \partial_j, g \, \partial_m \bigr] \wedge \partial_k \wedge \partial_n - \nonumber
    \bigl[ f \, \partial_j, \partial_n \bigr] \wedge \partial_k \wedge g \, \partial_m \\ & -
    \bigl[ \partial_k, g \, \partial_m \bigr] \wedge f \, \partial j \wedge \partial_n +
    \cancel{\bigl[ \partial_k, \partial_n \bigr]} \wedge f \, \partial_j \wedge g \, \partial_m \nonumber \\ = & \ 
    f g_{,j} \,\trwdg{m}{k}{n} - g f_{,m} \,\trwdg{j}{k}{n} \nonumber \\ & +
    g f_{,n} \,\trwdg{j}{k}{m} - f g_{,k} \,\trwdg{m}{j}{n}.
  \end{align}
\end{itemize}
We are now in a position to compute the self-bracket of a general bivector field $q$:
\[ \bigl[ q, q \bigr] =
   \bigl[ q^0 \partial_1 \wedge \partial_2 + q^1 \partial_2 \wedge \partial_0 + q^2 \partial_0 \wedge \partial_1, \;
          q^0 \partial_1 \wedge \partial_2 + q^1 \partial_2 \wedge \partial_0 + q^2 \partial_0 \wedge \partial_1 \bigr]. \]
Consider distributing the sums out of this expression.
From the basis expression in Equation~\eqref{eqn.snbasic} we see that terms vanish unless
the indices in the triple wedge product are pairwise distinct, so that self-brackets of individual summands vanish.
Furthermore, commutativity of the SN-bracket on bivectors means that the cross terms group in pairs, so we have:
\[ \bigl[ q, q \bigr] = 2 \times \Bigl(
     \bigl[ q^0 \partial_1 \wedge \partial_2,\; q^1 \partial_2 \wedge \partial_0 \bigr] +
     \bigl[ q^1 \partial_2 \wedge \partial_0,\; q^2 \partial_0 \wedge \partial_1 \bigr] +
     \bigl[ q^2 \partial_0 \wedge \partial_1,\; q^0 \partial_1 \wedge \partial_2 \bigr] \Bigr). \]
Now we apply Equation~\eqref{eqn.snbasic} to each term and group the results. Since the
four indices $j$, $k$, $m$, $n$ are always three distinct numbers and $k = m$, only two terms are non-zero, namely
$- g f_{,m} \,\trwdg{j}{k}{n} - f g_{,k} \,\trwdg{m}{j}{n} = \bigl(f g_{,k} - g f_{,k} \bigr) \, \trwdg{j}{k}{n}$.
We find:
\[ \bigl[ q, q \bigr] = 2 \, \trwdg{0}{1}{2} \Bigl(
     q^1 q^2_{,0} - q^2 q^1_{,0}  +  q^2 q^0_{,1} - q^0 q^2_{,1}  +  q^0 q^1_{,2} - q^1 q^0_{,2}\Bigr). \]

A bivector field $q$ is called a \emph{Poisson bivector} if it is integrable,
which happens if and only if its SN-self-bracket vanishes, $[q, q] = 0$.
If $q$ is given in coordinates by Equation~\eqref{eq.defq}, with $q^0, q^1, q^2 \in H^0(U; \mathcal{O})$,
then the integrability condition is:
\begin{align}\label{eq.sn0}
  0 &= q^1 \frac{\partial q^2}{\partial x^0} - q^2 \frac{\partial q^1}{\partial x^0}
  + q^2 \frac{\partial q^0}{\partial x^1} - q^0 \frac{\partial q^2}{\partial x^1}
  + q^0 \frac{\partial q^1}{\partial x^2} - q^1 \frac{\partial q^0}{\partial x^2} \nonumber \\
  &= q^1 \frac{\partial q^2}{\partial z} - q^2 \frac{\partial q^1}{\partial z}
  + q^2 \frac{\partial q^0}{\partial u_1} - q^0 \frac{\partial q^2}{\partial u_1}
  + q^0 \frac{\partial q^1}{\partial u_2} - q^1 \frac{\partial q^0}{\partial u_2}.
\end{align}
Note that by \Cref{subm}, $\mathcal{O}$-multiples of Poisson bivectors are themselves Poisson,
which we can also see directly from the above explicit expressions.

Some Poisson structures on local surfaces will be useful. We summarize a few results.

\begin{remark}[surfaces]\label{Zs}Using  canonical coordinate charts 
$Z_k= \Tot(\mathcal O_{\mathbb P^1}(-k))$
\cite[Lem.\thinspace 2.8]{BG2} calculated all their
 Poisson structures, obtaining generators 
as:
$ (1,-\xi), (z,-1) $  for $k =1$; 
$(1,-1)$ for  $k =2$;
$(u,-\xi^2v), (zu, -\xi v), (z^2u,-v)$ for $k \geq 3$,
written  in the basis $(\partial_z \wedge \partial_u, \partial_\xi \wedge\partial_v ).$
We will also use the generators  for  Poisson structures on $Z_0$ which are $(1,-\xi^2),(z,-\xi ),(z^2,-1)$, and 
 for $Z_{-1}$ which are $(1,-\xi^3),(z,-\xi^2 ),(z^2,-\xi), (z^3,1)$. 
\end{remark}

%

\section{Poisson  structures on \texorpdfstring{$W_1$}{W\_1}}

Let $\imath\colon U \hookrightarrow W_1$ denote the inclusion. We
actually demand that the coefficients of $q$ are functions on all of
$W_1$, i.e.\ that they should be in the image of $\imath^* \colon
R \ce H^0(W_1;\mathcal{O}_{W_1}) \to H^0(U;\mathcal{O}_{U})$.
(We will not distinguish between $R$ and its image over $U$:
we are only working in local coordinates on $U$, but with the understanding
that we are describing global objects on $W_1$.)
In local coordinates on $U$, $R$ consists of convergent power series in
\[ \bigl\{ 1, u_1, z u_1, u_2, z u_2 \bigr\} \text{ .} \]
This imposes additional conditions on the coefficients $q^i$.

\begin{lemma}\label{biW1}
The space  $M_1 = H^0(W_1,\Lambda^2TW_1)$
parametrizing all holomorphic bivector fields on $W_1$ 
has the following structure as a module over global holomorphic functions:
 $$M_1 = \langle e_1,e_2,e_3,e_4 \rangle / \langle zu_2e_1-zu_1 e_2- u_2 e_3 +u_1e_4 \rangle \text{ .}$$
\end{lemma}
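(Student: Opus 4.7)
The plan is to work in both coordinate charts, exhibit four explicit global bivectors as $R$-module generators, and then verify both generation and the single syzygy by comparing coefficients in the $U$-frame basis $(b_0, b_1, b_2)$.

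First I would compute the bivector frame transition between $U$ and $V$. The chain rule applied to $(\xi, v_1, v_2) = (z^{-1}, zu_1, zu_2)$ yields $\partial_\xi = -z^2 \partial_0 + z u_1 \partial_1 + z u_2 \partial_2$ and $\partial_{v_i} = z^{-1} \partial_i$, so the $V$-frame basis bivectors $\partial_{v_1} \wedge \partial_{v_2}$, $\partial_{v_2} \wedge \partial_\xi$, $\partial_\xi \wedge \partial_{v_1}$ have explicit $U$-expressions. I would take as generators
\[
e_1 = \partial_{u_2} \wedge \partial_z, \quad e_2 = \partial_z \wedge \partial_{u_1}, \quad e_3 = \partial_\xi \wedge \partial_{v_2}, \quad e_4 = \partial_{v_1} \wedge \partial_\xi.
\]
The first two are manifestly holomorphic on $U$; the latter two on $V$. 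The frame transition rewrites $e_3, e_4$ on $U$ as $e_3 = z b_1 + u_1 b_0$ and $e_4 = z b_2 + u_2 b_0$, confirming they extend holomorphically to $U$. Substituting these $U$-expressions into $zu_2 e_1 - zu_1 e_2 - u_2 e_3 + u_1 e_4$ and collecting $b_\ell$-components makes everything cancel, verifying the relation.

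The main work is surjectivity of the map $R^{\oplus 4} \to M_1$. Writing a global bivector as $q = q^0 b_0 + q^1 b_1 + q^2 b_2$ and equating with $\sum p^i e_i$ in the $U$-frame reduces generation to solving the system $q^0 = p^3 u_1 + p^4 u_2$ and $q^\ell = p^\ell + z p^{\ell+2}$ for $\ell = 1,2$ with $p^i \in R$. The gluing condition across $U \cap V$ translates into explicit constraints on Taylor coefficients: every monomial $z^i u_1^j u_2^k$ of $q^1$ or $q^2$ satisfies $i \leq j+k+1$, while $q^0$ is supported on $i \leq j+k-1$, with its boundary coefficient at $i = j+k-1$ rigidly forced by the $i = j+k+1$ coefficients of $q^1$ and $q^2$ (this is the content of the transition on the $\tilde b_0$-component). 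The strategy is to set the \emph{top} of $p^3, p^4$ (the coefficients at $i = j+k$) so as to absorb the excess $z$-powers of $q^1, q^2$; the rigidity just described then forces $u_1 p^3 + u_2 p^4$ to automatically reproduce the boundary of $q^0$, so the remaining interior of $q^0$ can be distributed between $p^3$ and $p^4$ according to divisibility of each monomial by $u_1$ or $u_2$, yielding elements of $R$. Setting $p^\ell \ce q^\ell - z p^{\ell+2}$ is then automatically in $R$, because its excess $z$-monomials have been cancelled by construction.

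For the kernel, comparing $(b_0, b_1, b_2)$-components of $\sum r^i e_i = 0$ yields $r^3 u_1 + r^4 u_2 = 0$ together with $r^\ell + z r^{\ell+2} = 0$. Since $R$ is an integral domain and $u_1, u_2$ form a regular sequence in it, the first equation forces $(r^3, r^4) = g(u_2, -u_1)$ for some unique $g \in R$, yielding $r^1 = -zu_2 g$, $r^2 = zu_1 g$; every syzygy is an $R$-multiple of $zu_2 e_1 - zu_1 e_2 - u_2 e_3 + u_1 e_4 = 0$, as claimed.

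The main obstacle will be the surjectivity bookkeeping: precisely matching the Taylor-coefficient constraints imposed by the $U$-$V$ gluing on a global bivector with the image of $R^{\oplus 4}$, and verifying that the constructed $p^i$ actually lie in the global ring $R$ rather than merely in the larger ring $\mathcal{O}(U)$ of holomorphic functions on one chart.
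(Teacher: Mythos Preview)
Your approach is more direct than the paper's: you propose $e_1,\dots,e_4$ from the outset (as the $U$- and $V$-frame basis bivectors) and argue surjectivity by a Taylor-coefficient analysis, whereas the paper first produces thirteen $\mathbb{C}$-generators by computing on the second formal neighbourhood of $\mathbb{P}^1$ and then reduces. Your surjectivity argument is correct and cleaner than the paper's computation; the bookkeeping you flag as the ``main obstacle'' works exactly as you describe.

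There is, however, a genuine gap in your kernel argument. The claim that $u_1,u_2$ form a regular sequence in $R$ is false. Recall that $R$ is the ring of global functions on the resolved conifold; algebraically $R\cong\mathbb{C}[a,b,c,d]/(ad-bc)$ with $a=u_1$, $b=zu_1$, $c=u_2$, $d=zu_2$. In $R/u_1R$ the relation becomes $bc=0$, so $u_2=c$ annihilates the nonzero class of $b=zu_1$ and is therefore a zerodivisor. Concretely, $(r^3,r^4)=(zu_2,-zu_1)$ satisfies $u_1r^3+u_2r^4=0$ in $R$ but is not an $R$-multiple of $(u_2,-u_1)$, since that would require $g=z\notin R$.

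The statement you want is nevertheless true, but you must use \emph{all three} component equations, not just the $b_0$-component. From $r^\ell+zr^{\ell+2}=0$ with $r^\ell\in R$ you get $zr^3,zr^4\in R$, which forces the top coefficients of $r^3,r^4$ (those with $i=j+k$) to vanish. Then the $b_0$-equation $u_1r^3+u_2r^4=0$, read monomial by monomial, shows that $r^3$ is divisible by $u_2$ and $r^4$ by $u_1$; crucially, the absence of top terms guarantees that the quotients $s=r^3/u_2$ and $t=r^4/u_1$ lie in $R$ (each monomial $z^iu_1^ju_2^{k-1}$ of $s$ has $i\le j+k-1=j+(k-1)$). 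Now $u_1u_2(s+t)=0$ in the domain $R$ gives $t=-s$, and the syzygy is $-s\cdot(zu_2,-zu_1,-u_2,u_1)$ as required. The spurious syzygy $(zu_2,-zu_1)$ above is excluded precisely because it has top terms and would force $r^1=-z^2u_2\notin R$.
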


\begin{proof}
Since $M_1$ is given by global holomorphic sections of $\Lambda^2TW_1$,
using \v{C}ech cohomology, we search for  $a,b,c$  holomorphic functions on $U$ such that 
$$
\left[
\begin{matrix}
z^2 & -zu_1 & -zu_2 \\
0 & z^{-1} & 0 \\
0 & 0 & z^{-1}
\end{matrix}
\right]
\left[
\begin{matrix}
a \\
b \\
c
\end{matrix}
\right]
$$
is holomorphic on $V$.
To start with $\displaystyle a= \sum_{l=0}^\infty\sum_{i=0}^\infty\sum_{s=0}^\infty a_{lis} z^lu_1^ iu_2^s$
and similar for $b$ and $c$.
Direct calculation by formal neighborhoods of $\mathbb P^1 \subset W_1$ gives the expression 
of the sections. It turns out that all generators we need already appear on the second formal 
neighborhood, where we have: 
$$\left[\begin{matrix}
a \\
b \\
c
\end{matrix}\right]= 
b_{000}
 \left[
\begin{matrix}0 \\ 1\\ 0
\end{matrix}\right]+
c_{000}
 \left[
\begin{matrix}0 \\ 0\\ 1
\end{matrix}\right]+
b_{100}
\left[\begin{matrix} u_1\\ z \\0
\end{matrix}\right]
+
c_{100}
\left[\begin{matrix} u_2\\ 0 \\ z
\end{matrix}\right]
+
a_{020}
 \left[
\begin{matrix} u_1^2\\ 0\\ 0
\end{matrix}\right]+
a_{002}
 \left[
\begin{matrix} u_2^2 \\ 0\\ 0
\end{matrix}\right]+
a_{011}
\left[\begin{matrix} u_1u_2\\ 0 \\0
\end{matrix}\right]
$$ 
$$+b_{010}
 \left[
\begin{matrix}0 \\ u_1\\ 0
\end{matrix}\right]+
b_{110}
 \left[
\begin{matrix}0 \\ zu_1\\ 0
\end{matrix}\right]+
b_{210}
 \left[
\begin{matrix}zu_1^2 \\ z^2u_1\\ 0
\end{matrix}\right]+
b_{001}
 \left[
\begin{matrix}0 \\ u_2\\ 0
\end{matrix}\right]+
b_{101}
 \left[
\begin{matrix}0 \\ zu_2\\ 0
\end{matrix}\right]+
b_{201}
 \left[
\begin{matrix}zu_1u_2 \\ z^2u_2\\ 0
\end{matrix}\right].$$

At this point we have 13  generators of $M_1$ as a vector space over $\mathbb C$:
$$e_1\ce
 \left[
\begin{matrix}0 \\ 1\\ 0
\end{matrix}\right],
e_2\ce
 \left[
\begin{matrix}0 \\ 0\\ 1
\end{matrix}\right],
e_3\ce
\left[\begin{matrix} u_1\\ z \\0
\end{matrix}\right],
e_4\ce
\left[\begin{matrix} u_2\\ 0 \\ z
\end{matrix}\right],
e_5\ce
 \left[
\begin{matrix} u_1^2\\ 0\\ 0
\end{matrix}\right],
e_6\ce
 \left[
\begin{matrix} u_2^2 \\ 0\\ 0
\end{matrix}\right],
e_7\ce
\left[\begin{matrix} u_1u_2\\ 0 \\0
\end{matrix}\right],
$$ 
$$e_8\ce 
 \left[
\begin{matrix}0 \\ u_1\\ 0
\end{matrix}\right],
e_9\ce
 \left[
\begin{matrix}0 \\ zu_1\\ 0
\end{matrix}\right],
e_{10}\ce 
 \left[
\begin{matrix}zu_1^2 \\ z^2u_1\\ 0
\end{matrix}\right],
e_{11}\ce 
 \left[
\begin{matrix}0 \\ u_2\\ 0
\end{matrix}\right],
e_{12}\ce
 \left[
\begin{matrix}0 \\ zu_2\\ 0
\end{matrix}\right],
e_{13}\ce
 \left[
\begin{matrix}zu_1u_2 \\ z^2u_2\\ 0
\end{matrix}\right].$$

These satisfy the set of relations:
$$ zu_2e_1-zu_1e_2-u_2 e_3 +u_1e_4=0, \quad   u_2^2e_5 -  u_1^2e_ 6=0$$
$$e_5-u_1e_3+zu_1e_1=0\quad  e_6-u_2e_4+zu_2e_2=0$$
$$e_7-u_2e_3+zu_2e_1=0\quad  e_7-u_1e_4+zu_1e_2=0$$
$$   u_2e_5 - u_1e_7=0, \quad   u_1e_6- e_7u_2=0.$$
We then proceed to obtain simpler presentations for $M_1$. For instance, clearly the  relations
on lines 2 and 3 
may be used to remove $e_5,e_6,e_7$, simplifying the presentation of $M_1$ to  
a set of 10 generators with 4 relations, and so on. 

After a long series of reductions, or else, using a computer 
algebra, we arrive at a far simpler presentation:
$M_1= \langle e_1,e_2,e_3,e_4 \rangle$ with the single relation. 
$$ zu_2e_1-zu_1 e_2- u_2 e_3 +u_1e_4=0.$$ 
\end{proof}

\begin{theorem}\label{t1}
Every holomorphic Poisson structure on $W_1$ has the form $\sum_{i=1}^4p^ie_i$ where 
$ (p^1,p^2,p^3,p^4) \in \ker B$.
\end{theorem}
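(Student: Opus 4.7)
The plan is to couple \Cref{biW1} with the Schouten--Nijenhuis self-bracket formula \eqref{eq.sn0}. By the lemma, any holomorphic bivector on $W_1$ has a representative $q = \sum_{i=1}^4 p^i e_i$ with $p^i \in R$. Reading off the column vectors assigned to each $e_i$ in the proof of \Cref{biW1} identifies the coefficients $(q^0, q^1, q^2)$ of \eqref{eq.defq} as
$$
  q^0 = u_1 p^3 + u_2 p^4, \qquad q^1 = p^1 + z\, p^3, \qquad q^2 = p^2 + z\, p^4,
$$
so the integrability condition $[q,q] = 0$ is just \eqref{eq.sn0} with these substitutions.

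The next step is mechanical: substitute and reorganize. Each of the six terms of \eqref{eq.sn0} expands into a sum of products $p^i\, \partial_\mu p^j$ plus terms of the form $f(z, u_1, u_2)\, p^i p^j$ arising from differentiating the variable coefficients $u_1, u_2, z$ that appear inside $e_3$ and $e_4$. Because \eqref{eq.sn0} is the value of the symmetric $\mathbb{C}$-bilinear pairing $[\cdot,\cdot]$ applied to $(q,q)$, and the parametrization $\mathbf{p} \mapsto q$ is $\mathbb{C}$-linear, the outcome is a single expression quadratic in $\mathbf{p} = (p^1, p^2, p^3, p^4)$. Packaging its coefficients into a symmetric $4 \times 4$ matrix $Q$ whose entries are first-order linear differential operators, we obtain
$$
  [q,q] = 2\,(\partial_0 \wedge \partial_1 \wedge \partial_2)\, \mathbf{p}^t Q\, \mathbf{p} = 2\,(\partial_0 \wedge \partial_1 \wedge \partial_2)\, B(\mathbf{p}),
$$
which is exactly the operator $B$ introduced in \Cref{operator}. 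The theorem follows at once: $q$ is Poisson iff the scalar $B(\mathbf{p})$ vanishes, iff $\mathbf{p} \in \ker B$.

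Two comments are in order. First, the map $\mathbf{p} \mapsto q$ has nontrivial fibres coming from the module relation of \Cref{biW1}, but since $B(\mathbf{p})$ is extracted from the intrinsic bivector $[q,q]$, it descends unambiguously to $M_1$; one should nonetheless verify directly that $\mathbf{p}^t Q\, \mathbf{p}$ is invariant under $\mathbf{p} \mapsto \mathbf{p} + g\,(zu_2, -zu_1, -u_2, u_1)$ for any $g \in R$, serving as a consistency check. Second, the main obstacle is the bookkeeping in the expansion: the substitution produces many terms that must be grouped correctly into the symmetric operator-valued form $\mathbf{p}^t Q\, \mathbf{p}$. The cleanest sanity check is to verify that each of the four generators is individually integrable, i.e.\ $B(e_i) = 0$ for $i = 1, \dots, 4$, which is moreover needed as input for the subsequent \Cref{iso}.
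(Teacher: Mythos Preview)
Your proposal is correct and follows essentially the same route as the paper: invoke \Cref{biW1} to write $q=\sum p^ie_i$, read off $(q^0,q^1,q^2)=(u_1p^3+u_2p^4,\;p^1+zp^3,\;p^2+zp^4)$, substitute into \eqref{eq.sn0}, and repackage the resulting bilinear expression as $\mathbf{p}^tQ\mathbf{p}=B(\mathbf{p})$. One small correction: the operator-valued matrix $Q$ in \Cref{operator} is \emph{antisymmetric} rather than symmetric (since terms like $p^1p^2_{,0}-p^2p^1_{,0}$ force $Q_{12}=-Q_{21}=\partial_0$), but this does not affect your argument.
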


Specifically, by \Cref{biW1}
global bivectors on $W_1$ are generated by four elements over $R$,
given on the $U$ chart by
$$e_1=
	\left[\begin{matrix}
		0 \\
		1 \\
  		0
	\end{matrix}\right] ,
e_2=
	\left[\begin{matrix}
		0 \\
		0 \\
  		1
	\end{matrix}\right] ,
e_3=
	\left[\begin{matrix}
		u_1 \\
		z \\
  		0
	\end{matrix}\right],
e_4=
	\left[\begin{matrix}
		u_2 \\
		0 \\
  		z
	\end{matrix}\right].
$$ 
Now write $ p = \sum_{h=1}^4 p^h e_h $
for a bivector field $p$ that extends to all of $W_1$, $p \in H^0(W_1; \Lambda^2 TW_1)$, that is,
\begin{align*}
  p &= p^1 b_1 + p^2 b_2 + p^3 (u_1 b_0 + z b_1) + p^4 (u_2 b_0 + z b_2) \\
    &= (\underbrace{u_1 p^3 + u_2 p^4}_{q^0}) b_0 + (\underbrace{p^1 + z p^3}_{q^1}) b_1 + (\underbrace{p^2 + z p^4}_{q^2}) b_2 \text{ ,}
\end{align*}
where $p^h \in R$ for $h=1, 2, 3, 4$.
We consider the integrability condition \eqref{eq.sn0} with:
\begin{align*}
  q^0(z, u_1, u_2) &= u_1 p^3(z, u_1, u_2) + u_2 p^4(z, u_1, u_2) \\
  q^1(z, u_1, u_2) &= p^1(z, u_1, u_2) + z p^3(z, u_1, u_2) \\
  q^2(z, u_1, u_2) &= p^2(z, u_1, u_2) + z p^4(z, u_1, u_2)
\end{align*}
The condition becomes:
\begin{align}\label{eq.sn0w1}
  0 =& \ (p^1 + z p^3) \frac{\partial(p^2 + z p^4)}{\partial z} - (p^2 + z p^4) \frac{\partial(p^1 + z p^3)}{\partial z} \nonumber \\
    &+ (p^2 + z p^4) \frac{\partial(u_1 p^3 + u_2 p^4)}{\partial{u_1}} - (u_1 p^3 + u_2 p^4) \frac{\partial(p^2 + z p^4)}{\partial{u_1}} \nonumber \\
    &- (p^1 + z p^3) \frac{\partial(u_1 p^3 + u_2 p^4)}{\partial{u_2}} + (u_1 p^3 + u_2 p^4) \frac{\partial(p^1 + z p^3)}{\partial{u_2}} \nonumber \\[1em]
    =& \ (p^1 + z p^3)(p^2_{,0} + p^4 + z p^4_{,0}) - (p^2 + z p^4)(p^1_{,0} + p^3 + z p^3_{,0}) \nonumber \\
    &+ (p^2 + z p^4)(p^3 + u_1 p^3_{,1} + u_2 p^4_{,1}) - (u_1 p^3 + u_2 p^4)(p^2_{,1} + z p^4_{,1}) \nonumber \\
    &- (p^1 + z p^3)(p^4 + u_2 p^4_{,2} + u_1 p^3_{,2}) + (u_1 p^3 + u_2 p^4)(p^1_{,2} + z p^3_{,2}) \nonumber \\[1em]
    =& \ \pbra p120 + z(\pbra p140 + \pbra p320) + z^2(\pbra p340) \nonumber \\
    &+ u_1(\pbra p231 + \pbra p312) + zu_1(\pbra p431) \nonumber \\
    &+ u_2(\pbra p241 + \pbra p412) + zu_2(\pbra p432).
\end{align}\\

\begin{note}
The vectors $\langle \{ e_1, e_2, e_3, e_4 \} \rangle_{\mathbb{C}}$ generate a
submodule of Poisson bivector fields over $R$: any such vector field
is of the form $\mu_1 p e_1 + \mu_2 p e_2 + \mu_3 p e_3 + \mu_4 p e_4$
for some $\mu_1, \mu_2, \mu_3, \mu_4 \in \mathbb{C}$ and $p \in R$,
say, which can be seen to satisfy Equation~\eqref{eq.sn0w1}, with $p^h = \mu_h p$ for $h=1,2,3,4$:
each antisymmetric term $\pbra pijk = \mu_i \mu_j (p\,p_{,k} - p\,p_{,k})$ vanishes.
The connection between the $U$ and the $V$ chart is described above.
\end{note}

For $h=1, 2, 3, 4$, write:
\begin{align}
  p^h(z, u_1, u_2)      &= \sum_{s=0}^{\infty}\sum_{t=0}^{\infty} \sum_{r=0}^{s+t} p^h_{rst} z^r u_1^s u_2^t \label{eq.p_series} \\
  p^h_{,0}(z, u_1, u_2) &= \sum_{s=0}^{\infty}\sum_{t=0}^{\infty} \sum_{r=1}^{s+t} p^h_{rst} r z^{r-1} u_1^s u_2^t \label{eq.p_series_z} \\
  p^h_{,1}(z, u_1, u_2) &= \sum_{s=1}^{\infty}\sum_{t=0}^{\infty} \sum_{r=0}^{s+t} p^h_{rst} s z^r u_1^{s-1} u_2^t \label{eq.p_series_u1} \\
  p^h_{,2}(z, u_1, u_2) &= \sum_{s=0}^{\infty}\sum_{t=1}^{\infty} \sum_{r=0}^{s+t} p^h_{rst} t z^r u_1^s u_2^{t-1} \label{eq.p_series_u2}.
\end{align}
We can substitute these power series expansions into condition
\eqref{eq.sn0w1} and derive conditions on every infinitesimal
neighbourhood, i.e.\ for bounded values of $s$ and $t$:
The restriction to the $n^\text{th}$ infinitesimal neighbourhood
sets to zero all terms $u_1^s u_2^t$ for which $s + t > n$.
Note that the series for $p^h$ has $(n + 1)^2$ terms on the $n^\text{th}$ infinitesimal neighbourhood
(more precisely: in the kernel of $\mathcal{O}_{\ell^{(n)}} \to \mathcal{O}_{\ell^{(n - 1)}}$),
where $n = s + t$.

\begin{note}
The expression in Equation~\eqref{eq.sn0w1} is an element of $R$,
i.e.\ a globally holomorphic function. This is to be expected, since
$p = \sum_{h=1}^4 p^h e_h$ is (the restriction to $U$ of) a global
bivector field, and the NS-bracket maps global (multi)vector fields to
global (multi)vector fields (being a composition of Lie brackets,
which map vector fields to vector fields). We can also verify this in
local coordinates: Let $[p, p] = f(p^i, p^i_{,j}) \; \partial_0 \wedge
\partial_1 \wedge \partial_2$, so that $f$ is the right-hand side of
Equation~\eqref{eq.sn0w1}. Note that $\partial_0 \wedge \partial_1
\wedge \partial_2 = \partial_{\tilde{0}} \wedge \partial_{\tilde{1}}
\wedge \partial_{\tilde{2}}$ on $U \cap V$ (after all, $W_1$ is
Calabi-Yau); we show that $f$ is globally holomorphic on $W_1$: If
$p^h \in R$, then $p^h_{,0}$ and $zp^h_{,0}$ are in $R$, too, as is
clear from considering \eqref{eq.p_series_z}.  Terms $u_1 p^h_{,1}$
and $u_2 p^h_{,1}$ are holomorphic, as can be seen from
\eqref{eq.p_series_u1}, and similarly for $u_1 p^h_{,2}$ and $u_2
p^h_{,2}$. The remaining terms are not individually globally
holomorphic, but they group as follows:
\[ p^3 \Bigl( z^2 p^4_{,0} - z u_1 p^4_{,1} - z u_2 p^4_{,2} \Bigr) -
   p^4 \Bigl( z^2 p^3_{,0} - z u_1 p^3_{,1} - z u_2 p^3_{,2} \Bigr) .\]
By considering \eqref{eq.p_series_z}, \eqref{eq.p_series_u1}, and
\eqref{eq.p_series_u2}, we see that the only non-holomorphic terms are
$z^{s + t + 1}u_1^s u_2^t$, and those appear with coefficient
$\bigl((s + t) - s - t\bigr)\bigl(p^4_{s+t,s,t} - p^3_{s+t,s,t}\bigr) = 0$.
\end{note}

\begin{note}\label{operator}
The quasi-linear differential operator $B$ defined above can be written as follows:
\begin{multline*}
  B(p^1, p^2, p^3, p^4) = \mathbf{p}^T Q \, \mathbf{p} = \\
  \Bigl[ p^1 \ p^2 \ p^3 \ p^4 \Bigr]
  \begin{bmatrix}
    0                              & \partial_0      & -u_1 \partial_2                                       & z \partial_0 - u_2 \partial_2 \\
    -\partial_0                    & 0               & -z \partial_0 + u_1 \partial_1                        & u_2 \partial_1 \\
    u_1\partial_2                  & z\partial_0 -u_1\partial_1    & 0                                                     & z^2 \partial_0 - zu_1 \partial_1 - zu_2 \partial_2 \\
    -z \partial_0 + u_2 \partial_2 & -u_2 \partial_1 & -z^2 \partial_0 + z u_1 \partial_1 + z u_2 \partial_2 & 0
  \end{bmatrix}
  \begin{bmatrix}p^1 \\ p^2 \\ p^3 \\ p^4 \end{bmatrix}
\end{multline*}
where have expressed $f$ using the quadratic form $Q$. We may linearize
this differential equation around a fixed solution $\mathbf{p}^T = [p^1 \ p^2 \ p^3 \ p^4]$:
\[ \lim_{\varepsilon \to 0} \frac{1}{\varepsilon} \Bigl( f(\mathbf{p} + \varepsilon \Delta\mathbf{p}) - f(\mathbf{p}) \Bigr) =
    \Delta\mathbf{p}^T Q \, \mathbf{p} + \mathbf{p}^T Q \, \Delta\mathbf{p} .\]
\end{note}

\subsection{Symmetries and embeddings}

We now give isomorphism between the Poisson structures on $W_1$.

\begin{remark} Note that there are two clear symmetries of $W_1$:
\begin{itemize}
\item exchanging the radial directions $s_0(z,u_1,u_2) = (z,u_2,u_1)$ and
\item exchanging the  charts $U$ and $V$, that is, $s_1(z,u_1,u_2)= (\xi,v_1,v_2).$
\end{itemize}
\end{remark}

These symmetries are automorphisms of $W_1$ and are also Poisson isomorphisms between some structures 
on $W_1$ as shown in the diagram below:

\begin{center}
\begin{tikzcd}
(W_1,e_3)		\arrow[<->]{r}{s_1}
					\arrow[swap,<->]{d}{s_0}
& 
(W_1,e_1)		\arrow[<->]{d}{s_0} 
\\
(W_1,e_2)	\arrow[swap,<->]{r}{s_1}
&
(W_1,e_4)
\end{tikzcd}.
\end{center}

In other words, we have that 
$e_1= s_0^*(e_2)$, $e_4= s_1^*(e_2)$, and $e_4 =s_1^* s_0^* e_2$.

\begin{theorem}\label{iso}
The Poisson structures $e_1,e_2,e_3,e_4$ are all  pairwise isomorphic.
\end{theorem}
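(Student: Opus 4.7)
The plan is to promote the diagram of symmetries in the preceding remark to a fully verified set of Poisson isomorphisms and then close under transitivity. The two involutions are $s_0$, the swap of the two fiber coordinates $u_1 \leftrightarrow u_2$, and $s_1$, the chart-exchange $(z, u_1, u_2) \mapsto (z^{-1}, zu_1, zu_2)$. First I would verify these are genuine biholomorphisms of $W_1$. For $s_0$ this is immediate from the symmetric gluing $v_i = zu_i$ of the two $\mathcal{O}_{\mathbb{P}^1}(-1)$-summands, which lets $s_0$ extend to the $V$-chart as $(\xi, v_1, v_2) \mapsto (\xi, v_2, v_1)$. For $s_1$ one checks that its $U$-chart formula (which contains $z^{-1}$) agrees with the analogous expression in the $V$-chart, gluing into a global involution; this verification is the principal technical point of the proof.

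Next I would compute the pushforwards $(s_0)_* e_i$ and $(s_1)_* e_i$ using the Jacobians of the two maps. The Jacobian of $s_0$ is the permutation matrix swapping $\partial_1$ and $\partial_2$ and fixing $\partial_0$, so a wedge-product calculation gives (up to sign) that $s_0$ interchanges $\{e_1, e_2\}$ and $\{e_3, e_4\}$. The Jacobian of $s_1$ is
\[
 J_{s_1} = \begin{pmatrix} -z^{-2} & 0 & 0 \\ u_1 & z & 0 \\ u_2 & 0 & z \end{pmatrix} \text{ ,}
\]
and applying it to each $e_i$ yields, again up to sign, $(s_1)_* e_2 = e_4$ and $(s_1)_* e_3 = e_1$, i.e.\ $s_1$ interchanges $\{e_1, e_3\}$ and $\{e_2, e_4\}$. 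Together these identities realise the four edges of the diagram of the preceding remark.

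The overall signs that appear in each pushforward identity can be absorbed by composing with the obvious Poisson automorphisms $(z, u_1, u_2) \mapsto (-z, u_1, u_2)$, $(z, u_1, u_2) \mapsto (z, -u_1, u_2)$, and $(z, u_1, u_2) \mapsto (z, u_1, -u_2)$, each of which negates exactly one of $\partial_0, \partial_1, \partial_2$ and hence exactly one of the $e_i$. With every arrow of the diagram promoted to a bona fide Poisson isomorphism, the four structures lie in a single orbit under the subgroup of $\mathrm{Aut}(W_1)$ generated by $s_0$, $s_1$ and these sign flips, and transitivity of Poisson isomorphism delivers all $\binom{4}{2} = 6$ pairwise isomorphisms, completing the proof.
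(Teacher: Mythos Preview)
Your approach is the paper's own: the remark immediately preceding the theorem already introduces the involutions $s_0$ and $s_1$, displays the commuting square, and records the identities $e_1 = s_0^*(e_2)$, $e_4 = s_1^*(e_2)$, $e_4 = s_1^* s_0^* e_2$ as the entire argument. Your proposal simply fills in the verifications the paper leaves implicit---that $s_1$ glues to a global involution and that the pushforwards on the $e_i$ come out as claimed---so the two proofs coincide in strategy.

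One small correction is worth making. The claim that each coordinate sign flip ``negates exactly one of the $e_i$'' is not quite right. For example, $(z,u_1,u_2)\mapsto(z,-u_1,u_2)$ sends $\partial_1\mapsto-\partial_1$ and $u_1\mapsto-u_1$, hence $b_0\mapsto-b_0$, $b_1\mapsto b_1$, $b_2\mapsto-b_2$; this fixes $e_1$ and $e_3$ but negates both $e_2$ and $e_4$. Each of your three sign flips in fact negates exactly \emph{two} of the four generators. This does no harm to the argument: all you actually need is, for each $i$, some global automorphism carrying $e_i$ to $-e_i$, and the maps you list already supply that. Just adjust the sentence accordingly.
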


There are 2 obvious inclusions of the surface $Z_1$ into the threefold $W_1$. 

\begin{notation}\label{pis} We denote by $\pi_{i}$ the Poisson structure on $Z_k$ 
that is given on the $U$-chart by $z^iu^\varepsilon$ where 
$\varepsilon = 0$ if  $i \leq 2$ and $\varepsilon = 1$ if $i \geq 3$.
We denote by $j_1\colon Z_1 \rightarrow W_1$ (resp. $j_2$) the inclusion into the first  (resp. second) fiber coordinate, that is, 
on the $U$-chart  $j_1(z,u) = (z,u,0)$ (resp.  $j_2(z,u) = (z,0,u)$). 
We call $j_1 $, $s_0j_1, s_1 j_1, s_1s_0j_1$ the {\bf principal embeddings} of $Z_1$ into $W_1$.
\end{notation}

\begin{theorem}\label{princ}
The 4 principal  embeddings of the Poisson surface $(Z_1,\pi_0)$ generate all Poisson structures on $W_1$.
\end{theorem}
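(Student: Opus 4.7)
The plan is to establish the theorem by computing, in $U$-coordinates, the bivector field on $W_1$ that each of the four principal embeddings produces from $\pi_0 = \partial_z \wedge \partial_u$, and then invoking the structural results about $M_1$ that have already been proved.

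First I would handle the two embeddings whose images lie in the $U$-chart. For $j_1(z,u) = (z,u,0)$ the induced bivector is simply $\partial_z \wedge \partial_{u_1}$, which in the $b$-basis is $e_2$. For $s_0 j_1(z,u) = (z,0,u)$ it is $\partial_z \wedge \partial_{u_2}$, which equals $-e_1$ (since $e_1 = b_1 = \partial_{u_2} \wedge \partial_z$). Next I would treat the $V$-chart embeddings $s_1 j_1$ and $s_1 s_0 j_1$ by a direct Jacobian computation with the transition $(\xi,v_1,v_2) = (z^{-1},zu_1,zu_2)$. This yields
\[
\partial_\xi = -z^2 \partial_z + zu_1\partial_{u_1} + zu_2\partial_{u_2}, \qquad \partial_{v_i} = z^{-1}\partial_{u_i},
\]
from which one computes $\partial_\xi \wedge \partial_{v_1} = -z\,\partial_z \wedge \partial_{u_1} + u_2\,\partial_{u_2} \wedge \partial_{u_1} = -e_4$ and similarly $\partial_\xi \wedge \partial_{v_2} = e_3$. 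Up to sign (which is immaterial for generation), the four principal embeddings therefore deliver exactly the four bivectors $e_1, e_2, e_3, e_4$.

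To conclude, I would combine this with the two earlier structural results. By Lemma~\ref{biW1}, the module $M_1$ of holomorphic bivector fields on $W_1$ is generated over $R$ by $e_1,e_2,e_3,e_4$ (modulo a single relation), and by Theorem~\ref{t1} every holomorphic Poisson structure on $W_1$ takes the form $\sum_{i=1}^4 p^i e_i$ with $(p^1,p^2,p^3,p^4) \in \ker B$. Since each generator $e_i$ has just been identified (up to sign) with the image of $\pi_0$ under a principal embedding, every Poisson structure on $W_1$ is an $R$-combination of such principal-embedding contributions.

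The main obstacle is purely bookkeeping: one has to track carefully the sign conventions when converting $\partial_\xi \wedge \partial_{v_i}$ back to the $U$-chart in the basis $(b_0,b_1,b_2)$, and confirm that the result really lands on $\{\pm e_1,\pm e_2,\pm e_3,\pm e_4\}$ rather than on some spurious combination. Once these identifications are made, the theorem is an immediate consequence of Lemma~\ref{biW1} and Theorem~\ref{t1}.
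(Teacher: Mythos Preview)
Your proposal is correct and follows essentially the same strategy as the paper: identify each of the four principal embeddings of $(Z_1,\pi_0)$ with one of the generators $e_1,e_2,e_3,e_4$ (up to sign), and then appeal to the module description of $M_1$. The paper carries this out slightly differently---it works with $j_1$ and $j_2=s_0 j_1$ and obtains the remaining two generators via the chart-swap $s$ on $Z_1$ rather than via $s_1$ on $W_1$ as you do (these are equivalent since $s_1\circ j_i = j_i\circ s$), and it leaves the final appeal to Lemma~\ref{biW1} implicit---but your explicit Jacobian computation for $\partial_\xi\wedge\partial_{v_i}$ and the concluding invocation of Lemma~\ref{biW1} and Theorem~\ref{t1} are entirely in line with the paper's argument.
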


\begin{proof}
Let $j_1(Z_1)$ (resp. $j_2(Z_1)$) be the embedding of the surface $Z_1$  into  $W_1$ 
 by $u_2=0$ and $v_2=0$ (resp. $u_1=0$ and $v_1=0$). Then  Poisson structures induced by the first embedding are:
 $$(j_1)_*(1)_U= \left[\begin{matrix}0 \\ 0 \\ 1\end{matrix}\right]_U,\quad 
(j_1)_*(-\xi)_V= \left[\begin{matrix}0 \\ 0 \\ -\xi\end{matrix}\right]_V, \quad \text{hence} \quad 
e_2\vert_{j_1(Z_1)}=(j_1)_*\pi_0, $$
analogously, 
$ \gamma_1\vert_{j_1(Z_1)}=j_1(s \pi_0) $.
The   induced Poisson structures by the second embedding are 
$$(j_2)_*(1)_U= \left[\begin{matrix}0 \\ 1 \\ 0\end{matrix}\right]_U, \quad
(j_2)_*(-\xi)_V= \left[\begin{matrix}0 \\ -\xi\\ 0\end{matrix}\right]_V, \quad  \text{hence} \quad 
 e_3\vert_{j_2(Z_1)}=(j_2)_*\pi_0,$$
analogously
$e_1\vert_{j_2(Z_1)} =(j_2)_*(s\pi_0) $.

\end{proof}

\subsection{Symplectic foliations on \texorpdfstring{\except{toc}{$\bm{W_1}$}\for{toc}{$W_1$}}{W\_1}}\label{leaf1}

Since  $e_1,e_2,e_3,e_4$ are all isomorphic, to understand their corresponding 
symplectic foliations, it is enough to describe the symplectic foliation in one case.
We consider  $e_2$ whose expression in canonical coordinates is
$\{f,g\}_{e_2} =  (df\wedge dg)  \lrcorner  (\frac{\partial}{\partial z} \wedge \frac{\partial}{\partial u_1})=
 (df\wedge dg)  \lrcorner  (\partial_0 \wedge \partial_1).$ 

\begin{theorem}\label{w1fol}
The symplectic foliation for $(W_1, \partial_0 \wedge \partial_1)$ is given by:

\begin{itemize}
\item $ \partial_0 \wedge \partial_1$ has degeneracy locus on the line 
$\{v_2= \xi=0\}$, where the leaves are $0$-dimensional, consisting of single points, and 
\item
$2$-dimensional symplectic leaves cut out on the $U$ chart by $u_2$ constant.
\end{itemize}
\end{theorem}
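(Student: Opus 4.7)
My plan is to verify the two claims in the two canonical charts of $W_1$, since the statement is entirely local in nature and all the content lies in the change of coordinates between $U$ and $V$. Since $e_2 = \partial_0 \wedge \partial_1$ is expressed very explicitly in $U$-coordinates, I get item (ii) essentially for free there; the only real computation is to transport $e_2$ to the $V$-chart and read off when the resulting bivector vanishes.

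For item (ii), the bracket $\{f,g\}_{e_2} = \partial_z f\,\partial_{u_1} g - \partial_{u_1} f\,\partial_z g$ on $U$ gives Hamiltonian vector fields $X_z = \partial_{u_1}$, $X_{u_1} = -\partial_z$, and $X_{u_2} = 0$. So the singular distribution spanned by Hamiltonians is the constant rank-$2$ distribution $\langle \partial_z, \partial_{u_1}\rangle$ on $U$; in particular $e_2$ is nondegenerate at every point of $U$. The unique integral surfaces are the level sets $\{u_2 = \text{const}\}$, and $u_2$ is a Casimir, confirming item (ii) and the table entry $f(u_2)$.

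For item (i), I need to transport $e_2$ to $V$ using the canonical gluing \eqref{canonical} with $k_1 = k_2 = 1$, i.e.\ $z = \xi^{-1}$, $u_1 = \xi v_1$, $u_2 = \xi v_2$. A short chain-rule computation gives
\[
\partial_z = -\xi^2\partial_\xi + \xi v_1\,\partial_{v_1} + \xi v_2\,\partial_{v_2},\qquad \partial_{u_1} = \xi^{-1}\partial_{v_1},
\]
and wedging produces
\[
e_2 = \partial_z \wedge \partial_{u_1} = -\xi\,\partial_\xi \wedge \partial_{v_1} \;-\; v_2\,\partial_{v_1} \wedge \partial_{v_2},
\]
which is manifestly holomorphic on $V$, as it must be. The antisymmetric coefficient matrix of $e_2$ viewed as $T^{*}W_1 \to TW_1$ on $V$ then has its two independent entries equal to $-\xi$ and $-v_2$, so it drops rank (to $0$) precisely on the locus $\{\xi = 0\} \cap \{v_2 = 0\}$. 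Since $e_2$ has rank $2$ throughout $U$ by the computation above, the full degeneracy locus in $W_1$ is exactly the line $\{v_2 = \xi = 0\}$. At each such point all Hamiltonians of $e_2$ vanish, so the leaf through it is the point itself, proving item (i).

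The calculation is routine; the only place I need to be a little careful is the chain-rule conversion to the $V$-chart (tracking the two $\xi$ factors that cancel in the wedge) and confirming that the pulled-back Casimir $u_2 = \xi v_2$ is consistent with the bivector one reads off in $V$. There is no substantive obstacle.
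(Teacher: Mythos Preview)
Your proof is correct and follows essentially the same approach as the paper: identify the Casimir $u_2$ on the $U$-chart to get the $2$-dimensional leaves, then transport $e_2$ to the $V$-chart to read off the degeneracy locus $\{\xi=v_2=0\}$. The only cosmetic difference is that the paper applies the transition matrix for $\Lambda^2 TW_1$ to the column vector $(0,0,1)$, whereas you carry out the equivalent chain-rule computation on $\partial_z\wedge\partial_{u_1}$ directly; the resulting bivector on $V$ agrees (up to an immaterial sign in the $\partial_\xi\wedge\partial_{v_1}$ coefficient) and the conclusion is identical.
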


\begin{proof}
To find the symplectic leaves we compute Poisson cohomology $H^0(W_1,e_2)$, 
obtaining that $ e_2 =  \, \partial_0 \wedge \partial_1$
has 2 dimensional symplectic leaves cut out on the $U$ chart by $u_2$ constant  (the Casimir functions), 
and
next changing coordinates
$$
\left[
\begin{matrix}
z^2 & -zu_1 & -zu_2 \\
0 & z^{-1} & 0 \\
0 & 0 & z^{-1}
\end{matrix}
\right]
\left[
\begin{matrix}
0 \\0 \\  1
\end{matrix}
\right]= \left[
\begin{matrix}
-zu_2 \\
0 \\
 z^{-1}
\end{matrix}
\right]= 
 \left[
\begin{matrix}
-v_2 \\
0 \\
 \xi
\end{matrix}
\right]
$$
so, we see that the expression of $e_2$ on the $V$-coordinate 
is $-v_2\frac{\partial}{\partial v_1}\wedge \frac{\partial}{\partial v_2}+\xi \frac{\partial}{\partial \xi} \wedge \frac{\partial}{\partial  v_1}$, which 
vanishes when $\xi=v_2=0$.
 Hence
 $e_2$ has degeneracy locus on the line 
$D(e_2)=\{v_2= \xi=0\}$, where the leaves are
 0 dimensional,
consisting of each of the points in the line $\xi=v_2=0$.
\end{proof}

\section{Poisson structures on \texorpdfstring{$W_2$}{W\_2}}

The Calabi--Yau threefold we consider in this section is
 $$W_2\ce \Tot (\mathcal O_{\mathbb P^1}(-2) \oplus \mathcal O_{\mathbb P^1}) = Z_2 \otimes \mathbb C.$$
We will carry out calculations using the canonical coordinates $W_1= U \cup V$ 
where $U \simeq \mathbb C^3 \simeq V$ with coordinates $U = \{z,u_1,u_2\}$, $V= \{\xi, v_1,v_2\}$,
and change of coordinates on $U\cap V \simeq \mathbb C^* \times \mathbb C\times \mathbb C$ given by 
\[ \bigl\{ \xi = z^{-1} \text{ ,\quad}
         v_1 = z^2 u_1 \text{ ,\quad}
         v_2 =  u_2 \bigr\} \text{ ,} \]
so that $z = \xi^{-1}$, $u_1 = \xi^2 v_1$, and $u_2 =  v_2$.

The transition matrix for the tangent bundle  is the Jacobian matrix of the change of 
coordinates, and taking the second exterior power we obtain the transition matrix for $\Lambda^2TW_2$:
\[
\left[
\begin{matrix}
z^2 & -2zu_1 & 0 \\
0 & -z^{-2} & 0 \\
0 & 0 & -1
\end{matrix}
\right]
.\]

Let $\imath\colon U \hookrightarrow W_2$ denote the inclusion. We
actually demand that the coefficients of $q$ are functions on all of
$W_2$, i.e.\ that they should be in the image of $\imath^* \colon
R \ce H^0(W_2;\mathcal{O}_{W_2}) \to H^0(U;\mathcal{O}_{U})$.
(We will not distinguish between $R$ and its image over $U$:
we are only working in local coordinates on $U$, but with the understanding
that we are describing global objects on $W_2$.)
In local coordinates on $U$, $R$ consists of convergent power series in
\[ \bigl\{    1, u_1,zu_1,z^2u_1,u_2 \bigr\} \text{ .} \]
Now write $p = \sum_{h=1}^5 p^h e_h$
for a bivector field $p$ that extends to all of $W_2$, $p \in H^0(W_2; \Lambda^2 TW_2)$.

\begin{lemma} \label{W2gens}The space  $M_2 = H^0(W_2,\Lambda^2TW_2)$
parametrizing all holomorphic bivector fields  on $W_2$ 
has the following structure as a module over global holomorphic functions:
$$M_2 = \langle e_1,e_2,e_3,e_4, e_5 \rangle / \langle  u_1e_3-zu_1e_1,
 u_2e_5-zu_2e_3-2zu_2e_2\rangle.$$
\end{lemma}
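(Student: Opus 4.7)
The plan is to follow closely the approach of \Cref{biW1}, adapted to the new transition matrix. First I would represent a global bivector on $U$ as a triple $(a,b,c)^T$ of convergent power series in $z, u_1, u_2$, apply the given transition matrix, and require holomorphicity on $V$. Using the substitution $z^l u_1^i u_2^s \mapsto \xi^{2i-l} v_1^i v_2^s$, the third row immediately forces $c \in R$; the second row forces $b_{lis} = 0$ whenever $l > 2i+2$; and the first row $z^2 a - 2zu_1 b$ couples $a$ and $b$. Matching indices of the negative-$\xi$ contributions, I expect the only surviving diagonal to be $a_{2i-1, i, s} = 2\,b_{2i, i-1, s}$ for $i \geq 1$, with all other bad $a$-coefficients (those with $l \geq 2i-1$ off this diagonal) forced to vanish, and in particular $a_{l,0,s}=0$ for all $l,s$.

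Next I would exhibit the five generators explicitly, namely
\[ e_1=(0,1,0),\quad e_2=(u_1,0,0),\quad e_3=(0,z,0),\quad e_4=(0,0,1),\quad e_5=(2zu_1, z^2, 0), \]
and verify by direct inspection that each is holomorphic on $V$ (the only nontrivial case is $e_5$, where the cancellation $z^2 \cdot 2zu_1 - 2zu_1 \cdot z^2 = 0$ kills the first $V$-component). The two listed syzygies are then routine componentwise checks: $u_1 e_3 = zu_1 e_1 = (0, zu_1, 0)$, and $u_2 e_5 = (2zu_1 u_2,\, z^2 u_2,\, 0)$ equals $zu_2 e_3 + 2zu_2 e_2$ at the level of bivectors.

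Finally I would prove that $e_1, \dots, e_5$ generate $M_2$ over $R$ and that no further relations are needed, by peeling off each layer in turn: subtract $c \cdot e_4$ to kill $c$; then an $R$-multiple of $e_2$ to eliminate the free $a$-terms with $l \leq 2i-2$, $i\geq 1$; then $R$-multiples of $e_1$ and $e_3$ (using the factorization $z^{2i+1} u_1^i u_2^s = z \cdot (z^2 u_1)^i u_2^s$) to absorb the $b$-terms with $l \leq 2i+1$; and finally $(z^2 u_1)^i u_2^s \cdot e_5$ to absorb each boundary monomial $b_{2i+2, i, s}$ together with its coupled partner $a_{2i+1, i+1, s}$. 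The hard part will be this last step: checking that the coupled-boundary layer is exactly spanned by $R$-multiples of $e_5$ and that no syzygies beyond the two stated arise. This coupling is the feature distinguishing $W_2$ from $W_1$ --- the $-2zu_1 b$ term in the transition matrix is absent for $W_1$, and it is precisely what produces both the new generator $e_5$ and the second syzygy. As in the proof of \Cref{biW1}, the verification that the two listed relations suffice could be completed either by a careful combinatorial bookkeeping argument or by a Gr\"obner-basis calculation in a computer algebra system.
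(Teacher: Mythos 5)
Your proposal is correct and follows essentially the same route as the paper: a \v{C}ech computation of $H^0(W_2,\Lambda^2 TW_2)$ by expanding $(a,b,c)$ in power series against the transition matrix of $\Lambda^2 TW_2$, yielding the same five generators $e_1,\dots,e_5$ and the same two syzygies. Your index bookkeeping (the coupling $a_{2i-1,i,s}=2b_{2i,i-1,s}$ and the peeling-off argument for generation) is in fact more explicit than the paper's formal-neighbourhood computation, and both treatments leave the verification that the two listed relations exhaust the syzygy module at the same level of detail --- the paper simply multiplies the $\mathbb{C}$-linear equations $e_3-ze_1=0$ and $e_5-ze_3-2ze_2=0$ by a global function.
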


\begin{proof}

To find 
 $H^0(W_2,\Lambda^2TW_2)$
 we need global holomorphic sections,  that is, we must find $a,b,c$  holomorphic on $U$ such that 
$$
\left[
\begin{matrix}
z^2 & -2zu_1 & 0 \\
0 & -z^{-2} & 0 \\
0 & 0 & -1
\end{matrix}
\right]
\left[
\begin{matrix}
a \\
b \\
c
\end{matrix}
\right]
$$
is holomorphic on $V$.
To start with $\displaystyle a= \sum_{l=0}^\infty\sum_{i=0}^\infty\sum_{s=0}^\infty a_{lis} z^lu_1^ iu_2^s$
and similar for $b$ and $c$.
     We proceed by calculations on formal neighborhoods of the $\mathbb P^1 \subset W_2$ 
      and verify that generators for all global sections are already found on the first formal neighborhood,
      where the general expression of a section of $\Lambda^2TW_2$ is:
$$
\left[\begin{matrix}
		a \\
		b \\
  		c
	\end{matrix}\right] =
c_{000}
	\left[\begin{matrix}
		0 \\
		0 \\
  		1
	\end{matrix}\right] 
+c_{010}
	\left[\begin{matrix}
		0 \\
		0 \\
  		u_1
	\end{matrix}\right]
+c_{110}
	\left[\begin{matrix}
		0 \\
		0 \\
  		zu_1
	\end{matrix}\right]
+c_{210}
	\left[\begin{matrix}
		0 \\
		0 \\
  		z^2u_1
	\end{matrix}\right]
+c_{001}
	\left[\begin{matrix}
		0 \\
		0 \\
  		u_2
	\end{matrix}\right]
+a_{010}
	\left[\begin{matrix}
		u_1 \\
		0 \\
  		0
	\end{matrix}\right] $$
	$$
+b_{000}
	\left[\begin{matrix}
		0 \\
		1 \\
  		0
	\end{matrix}\right] 
+b_{100}
	\left[\begin{matrix}
		0 \\
		z \\
  		0
	\end{matrix}\right]
+b_{200}
	\left[\begin{matrix}
		2 zu_1 \\
		z^2 \\
  		0
	\end{matrix}\right]
+b_{010}
	\left[\begin{matrix}
		0 \\
		u_1 \\
  		0
	\end{matrix}\right]\\
+b_{110}
	\left[\begin{matrix}
		0 \\
		zu_1 \\
  		0
	\end{matrix}\right]
+b_{210}
	\left[\begin{matrix}
		0 \\
		z^2u_1 \\
  		0
	\end{matrix}\right] \\
+b_{310}
	\left[\begin{matrix}
		0 \\
		z^3u_1 \\
  		0
	\end{matrix}\right].
     $$     
We  then need the structure of    $M = H^0(W_2,TW_2)$ as a module over global functions.
At first this gives us  potentially 13 generators, but since   $u_1,zu_1,z^2u_1,u_2$
are global functions, we obtain that in fact all sections can be obtained from the smaller set of
     generators:
$$
e_1=
	\left[\begin{matrix}
		0 \\
		1 \\
  		0
	\end{matrix}\right], 
e_2=
	\left[\begin{matrix}
		u_1 \\
		0 \\
  		0
	\end{matrix}\right] ,	
e_3=
\left[\begin{matrix}
		0 \\
		z \\
  		0
	\end{matrix}\right],
e_4=\left[\begin{matrix}
		0 \\
		0 \\
  		1
	\end{matrix}\right] ,
e_5=
	\left[\begin{matrix}
		2zu_1 \\
		z^2 \\
  		0
	\end{matrix}\right].
$$     
To describe the module structure over global sections, 
we write  relations among the generators. 
We have the equations:
$$e_3-ze_1=0$$
$$e_5-ze_3-2ze_2 =0.$$
Note that there are no equations involving $e_4$. This corresponds to the fact that the geometry of $W_2= Z_2 \times \mathbb C$
is that of a surface product $\mathbb C$. Accordingly, we shall not involve $u_2$ in the relations to be obtained from these equations. 
To get relations as an $\mathcal O(W_2)$-module, we multiply the equations by $u_1$, obtaining:
$$u_1e_3-zu_1e_1=0$$
$$u_2e_5-zu_3e_4-2zu_2e_2 =0.$$
\end{proof}

Next we discuss which of these bivector fields give isomorphic Poisson structures.

\begin{lemma}\label{iso1-5}
The Poisson manifolds $(W_2, e_1)$ and $(W_2,e_5)$ are isomorphic.
\end{lemma}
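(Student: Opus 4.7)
The plan is to exhibit an explicit biholomorphic automorphism $\Phi\colon W_2 \to W_2$ and then verify by direct computation that $\Phi_* e_1 = e_5$. The guiding observation is that the change of coordinates for the $\mathcal{O}_{\mathbb{P}^1}(-2)$ summand is palindromic: the transition $v_1 = z^2 u_1$ is equivalent to $u_1 = \xi^2 v_1$, so the surface $Z_2$ carries an involutive automorphism interchanging its two standard charts. Since $W_2 \simeq Z_2 \times \mathbb{C}$, this lifts to $W_2$, and combining it with a sign flip of the trivial $\mathbb{C}$-factor will supply the desired Poisson isomorphism; this is the analogue for $W_2$ of the map $s_1$ used for $W_1$ in \Cref{iso}.

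Concretely, I define $\Phi$ by declaring that a point with $U$-coordinates $(z,u_1,u_2)$ is sent to the point with $V$-coordinates $(z,u_1,-u_2)$, and, symmetrically, that a point with $V$-coordinates $(\xi,v_1,v_2)$ is sent to the point with $U$-coordinates $(\xi,v_1,-v_2)$. The first step is to check that these two prescriptions agree on $U\cap V$, which reduces to the palindromic identity above. The second step is to read off the pushforward on vector fields: under $\Phi$ the $U$-coordinates $(z,u_1,u_2)$ at $p$ are identified with the $V$-coordinates $(z,u_1,-u_2)$ at $\Phi(p)$, which gives
\[
\Phi_*\partial_z \,=\, \partial_\xi, \qquad \Phi_*\partial_{u_1} \,=\, \partial_{v_1}, \qquad \Phi_*\partial_{u_2} \,=\, -\partial_{v_2}.
\]
The third step is to apply this to $e_1 = \partial_{u_2}\wedge\partial_z$, obtaining $\Phi_* e_1 = (-\partial_{v_2})\wedge\partial_\xi = \partial_\xi\wedge\partial_{v_2}$, and to compare this with the $V$-chart expression of $e_5$ obtained by applying the transition matrix for $\Lambda^2 TW_2$ from the proof of \Cref{W2gens} to the $U$-chart representative $(2zu_1,\,z^2,\,0)^T$; the result is $(0,-1,0)^T$, i.e.\ $\partial_\xi\wedge\partial_{v_2}$. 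Hence $\Phi_* e_1 = e_5$.

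The main obstacle is not computational but sign bookkeeping. The pure chart-swap without the $u_2$-negation already sends $e_1$ to $-e_5$ (the extra minus is traceable to the $-z^{-2}$ entry in the transition matrix for $\Lambda^2 TW_2$), so including the flip $u_2\mapsto -u_2$ is essential to land on $e_5$ rather than on $-e_5$. Once this sign is tracked correctly, the remaining verifications are routine.
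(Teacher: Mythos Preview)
Your argument is correct and follows essentially the same route as the paper: the key isomorphism is the involution of $W_2$ that swaps the two canonical charts, which the paper uses to identify $(W_2,e_1)$ with $(W_2,-e_5)$ before appealing (without details) to $(W_2,-e_5)\simeq(W_2,e_5)$. The only difference is that you fold these two steps into a single explicit automorphism by composing the chart swap with the sign flip $u_2\mapsto -u_2$; this has the modest advantage of making the last isomorphism explicit rather than asserted, and of landing directly on $e_5$.
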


\begin{proof}
Note that by writing $e_1$ in $V$-coordinates we get:
\[
\left[
\begin{matrix}
z^2 & -2zu_1 &  0 \\
0 &- z^{-2} & 0 \\
0 & 0 & -1
\end{matrix}
\right]
\left[
\begin{matrix}
0 \\ 1 \\ 0
\end{matrix}
\right]
=
\left[
\begin{matrix}
-2zu_1 \\ -z^{-2} \\ 0
\end{matrix}
\right]
=
\left[
\begin{matrix}
-2\xi v_1 \\ -\xi^2 \\ 0
\end{matrix}
\right].
\]
So we get an isomorphism between $(W_2, e_1)$ and $(W_2,-e_5)$ by mapping the $U$-chart of one to the $V$-chart of the other and vice-versa. Then the desired isomorphism follows from the fact that $(W_2,e_5)$ and $(W_2,-e_5)$ are isomorphic.
\end{proof}

We now describe the loci where Poisson structures on $W_2$ degenerate.

\begin{lemma} \label{degeneracy2}
The degeneracy loci of Poisson structures on $W_2$ are:  
\begin{itemize}
\item isomorphic to $\mathbb C^2$ for $e_1$, and
\item isomorphic to $\mathbb P^1 \times \mathbb C$ for $e_2$, 
\item isomorphic to $\mathbb C^2\cup \mathbb C$  for $e_3$, and
\item empty for $e_4$. 
\end{itemize}
\end{lemma}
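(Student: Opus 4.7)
The plan is to examine each generator $e_i$ in turn, compute the simultaneous vanishing locus of its three coefficients on the $U$-chart, transport the bivector to the $V$-chart via the $\Lambda^2 TW_2$ transition matrix displayed above, compute the vanishing locus there, and finally glue the two pieces along $U\cap V$. The four loci claimed in the statement should then read off directly from this chart-by-chart analysis.

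For $e_4=(0,0,1)_U$ the third coefficient is constant and nonzero, and the transition matrix sends it to $(0,0,-1)_V$, so $D(e_4)=\emptyset$. For $e_1=(0,1,0)_U$ the middle coefficient $1$ is nonvanishing on $U$, while on $V$ one computes $(-2\xi v_1,-\xi^2,0)_V$, which vanishes exactly along $\{\xi=0\}\simeq \mathbb C^2_{v_1,v_2}$. For $e_2=(u_1,0,0)_U$ the $U$-zero locus is $\{u_1=0\}$ and the $V$-expression is $(v_1,0,0)_V$ with zero locus $\{v_1=0\}$; since $v_1=z^2u_1$, the two surfaces agree on the overlap and assemble into the zero section of the $\mathcal O(-2)$-summand, fibred trivially in the $u_2/v_2$ direction, hence $\mathbb P^1\times\mathbb C$. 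For $e_3=(0,z,0)_U$ the $U$-zero locus is $\{z=0\}\simeq\mathbb C^2_{u_1,u_2}$, while on $V$ the expression is $(-2v_1,-\xi,0)_V$ with zero locus $\{\xi=v_1=0\}\simeq\mathbb C_{v_2}$; these two pieces lie over opposite points of $\mathbb P^1$ and so are disjoint, yielding $\mathbb C^2\sqcup\mathbb C$.

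All the coordinate computations are routine once the transition matrix is in hand; the only step requiring conceptual attention is the $e_2$ case, where one must recognise $\{u_1=0\}\cup\{v_1=0\}$ as a globally defined surface in $W_2$, namely the zero section of the $\mathcal O(-2)$-summand carried along with the free $\mathbb C$-fibre of the $\mathcal O(0)$-summand. This is the step I expect to be the main (modest) obstacle, but it reduces to the transition relation $v_1=z^2u_1$ already implicit in the definition of $W_2$. By contrast, the $e_3$ case superficially looks similar but is forced apart into a disjoint union by the extra factor of $z$, which shifts the $V$-vanishing locus away from $\{u_1=0\}$; keeping this distinction clear is essentially the only care needed in writing up the argument.
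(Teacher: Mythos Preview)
Your proposal is correct and follows essentially the same approach as the paper: compute the coefficients of each $e_i$ in both charts via the transition matrix for $\Lambda^2 TW_2$, read off the vanishing loci, and glue. Your write-up is in fact slightly more explicit than the paper's about the gluing step for $e_2$ (identifying the locus as the zero section of the $\mathcal O(-2)$-summand carried along the trivial $\mathbb C$-factor) and about the disjointness of the two pieces for $e_3$.
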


\begin{proof} The coefficients of the Poisson structures in coordinate charts are:
$$
e_1 \ce   \left[\begin{matrix} 0 \\ 1 \\ 0 \end{matrix}  \right] _U =  \left[\begin{matrix}  -2 \xi v_1 \\ -\xi^2 \\ 0  \end{matrix} \right]  _V 
e_2 \ce \left[\begin{matrix}  u_1 \\ 0 \\ 0 \end{matrix} \right] _U = \left[ \begin{matrix}  v_1 \\ 0 \\ 0  \end{matrix}  \right] _V
e_3  \ce \left[\begin{matrix}  0 \\ z \\ 0 \end{matrix}  \right] _U =  \left[\begin{matrix}  -2 v_1 \\ \xi \\ 0  \end{matrix}  \right] _V 
e_4 \ce   \left[ \begin{matrix}  0 \\ 0 \\ 1 \end{matrix}  \right] _U = \left[ \begin{matrix} 0 \\ 0 \\ -1  \end{matrix}  \right] _V.
$$

On the $V$ chart we have that $e_1$ degenerates when $\xi=0$ which is copy of $\mathbb C^2$.
Therefore, we have that $e_2$ degenerates when $u_1=v_1=0$ which gives a trivial product $\mathbb P^1 \times \mathbb C$.
On the $U$ chart we have that $e_3$ degenerates when $z=0$ which is a copy of $\mathbb C^2$, on the $V$ chart we have that $e_3$ degenerates when $\xi=v_1=0$ which is copy of $\mathbb C$.
For $e_4$ the degeneracy locus is empty.
\end{proof}

\begin{corollary} The  brackets $e_1, e_2, e_3, e_4$ give $W_2$ non-isomorphic Poisson structures. 
\end{corollary}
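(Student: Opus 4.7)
The plan is to distinguish the four Poisson structures using their degeneracy loci, which have already been computed in \Cref{degeneracy2}. First I would record the general fact that any Poisson isomorphism $\phi \colon (W_2, \pi) \to (W_2, \pi')$ restricts to a biholomorphism $D(\pi) \to D(\pi')$: the rank of $\pi^{\sharp}\colon T^*W_2 \to TW_2$ at a point is intrinsic, and is preserved under pullback by $\phi$. Consequently, non-biholomorphic degeneracy loci force non-isomorphic Poisson structures.

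Then I would simply compare the four loci read off from \Cref{degeneracy2}, namely $D(e_1) \cong \mathbb{C}^2$, $D(e_2) \cong \mathbb{P}^1 \times \mathbb{C}$, $D(e_3) \cong \mathbb{C}^2 \sqcup \mathbb{C}$, and $D(e_4) = \emptyset$. The first distinction is trivial: $e_4$ is nondegenerate, so $(W_2, e_4)$ cannot be isomorphic to any of the other three, whose degeneracy loci are nonempty. The second distinction is topological: $D(e_3)$ is disconnected with irreducible components of different dimensions, while $D(e_1)$ and $D(e_2)$ are both connected and pure-dimensional, so $e_3$ is distinguished from both $e_1$ and $e_2$. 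Finally, $D(e_2)$ contains the compact curve $\mathbb{P}^1 \times \{0\}$, while $D(e_1) \cong \mathbb{C}^2$ is affine and therefore contains no compact analytic curves; hence $e_1$ and $e_2$ are not Poisson-isomorphic. These three comparisons exhaust the six pairs.

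The only delicate point — and the closest thing to an obstacle — is confirming that the two pieces in $D(e_3)$ really are disjoint components and not an artifact of the chart splitting: the locus $\{z = 0\} \subset U$ lies in the fiber over one point of $\mathbb{P}^1$, while $\{\xi = v_1 = 0\} \subset V$ lies over the opposite point, and these two fibers do not meet. This check is immediate from the transition functions, so no substantial obstacle remains.
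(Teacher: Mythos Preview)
Your proposal is correct and follows precisely the approach the paper intends: the corollary is stated immediately after \Cref{degeneracy2} without a separate proof, so the implied argument is exactly the comparison of degeneracy loci that you carry out. You have simply made explicit the invariance of $D(\pi)$ under Poisson isomorphism and the pairwise distinctions among $\emptyset$, $\mathbb{C}^2$, $\mathbb{P}^1\times\mathbb{C}$, and $\mathbb{C}^2\sqcup\mathbb{C}$, which is what the paper leaves to the reader.
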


There are  natural inclusions of the surfaces $\mathbb C$, $Z_0$, and $Z_2$ into $W_2$:

\begin{notation}
We denote by $j_s $ for $s=0,1,2$ the inclusions of $\mathbb C^2$, $Z_0$, and $Z_2$ into the threefold $W_2$.
Hence, in coordinates we have: 
\begin{itemize}
\item   $j_0\colon \mathbb C^2 \rightarrow W_2$ includes $\mathbb C^2$ as the fiber  $z=\xi=1$,
\item $j_1\colon Z_2 \rightarrow W_2$ includes $Z_2$ as the surface $u_2=v_2=0$, 
\item $j_2\colon Z_0 \rightarrow W_2$ includes $Z_0$ as the surface $u_1=\xi^2 v_1=0$.
\end{itemize}
\end{notation}

\begin{theorem} \label{emb2}The embedded Poisson surfaces $j_0(\mathbb C^2, \pi_0),$ $j_1(Z_2,\pi_0)$ and $j_2(Z_0,\pi_i)$ 
with $i=0,1,2,$
 generate 
all Poisson structures on $W_2$.
\end{theorem}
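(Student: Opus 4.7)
The proof plan parallels \Cref{princ} for $W_1$: one identifies each of the five module generators $e_1,\dots,e_5$ of $M_2$ from \Cref{W2gens} as the pushforward of a Poisson structure on one of the listed embedded surfaces, using \Cref{iso1-5} to absorb the redundancy $e_1 \simeq e_5$. All of the work is local, comparing expressions in the $U$-chart and then checking holomorphy on $V$ via the transition matrix of \Cref{W2gens}.

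The cases $j_1$ and $j_2$ are mechanical. By \Cref{Zs}, $\pi_0$ on $Z_2$ is $\partial_z \wedge \partial_u$ in the $U$-chart; since $j_1(z,u) = (z,u,0)$ sends $u$ to $u_1$, we get $(j_1)_* \pi_0 = \partial_z \wedge \partial_{u_1} = b_2 = e_4$. For $j_2(z,u) = (z,0,u)$, the three generators $\pi_i = z^i \partial_z \wedge \partial_u$ on $Z_0$ ($i=0,1,2$) push forward to $z^i \partial_z \wedge \partial_{u_2} = -z^i b_1$; these agree, up to overall sign, with $e_1 = b_1$, $e_3 = z b_1$, and with the restriction of $e_5 = 2zu_1 b_0 + z^2 b_1$ to $\{u_1 = 0\} = j_2(Z_0)$ (since the $b_0$-term vanishes there). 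Signs can be absorbed in the trivial Poisson isomorphism $(W_2, p) \simeq (W_2, -p)$, so the three structures $j_2(Z_0, \pi_i)$ produce $e_1$, $e_3$, and $e_5$ respectively, the last being redundant by \Cref{iso1-5}.

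The subtle case is $j_0$, whose image $\{z = \xi = 1\}$ is a single fiber $\mathbb C^2_{u_1,u_2}$, and whose standard symplectic structure $\pi_0 = \partial_{u_1} \wedge \partial_{u_2} = b_0$ is \emph{not} globally holomorphic on $W_2$: the transition of \Cref{W2gens} sends $b_0 \mapsto z^2 b_0 = \xi^{-2} b_0$, which is not holomorphic on $V$. Multiplying by the global section $u_1$ cancels the pole and yields $u_1 b_0 = e_2$, satisfying $e_2|_{j_0(\mathbb C^2)} = u_1 \cdot (j_0)_* \pi_0$. I expect this to be the main obstacle: justifying that this canonical rescaling by $u_1$ is the correct interpretation of ``generating''. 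The cleanest way is to observe that among all $\mathcal O(W_2)$-multiples of $b_0$, the minimal one that is globally holomorphic is precisely $u_1 b_0 = e_2$, uniquely up to a holomorphic unit. Once $j_0$ is handled in this way and all of $e_1, e_2, e_3, e_4, e_5$ are realized, \Cref{W2gens} shows that every global bivector on $W_2$ is an $\mathcal O(W_2)$-combination of these, completing the proof.
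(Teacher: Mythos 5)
Your proposal is correct and follows essentially the same route as the paper: each generator of $M_2$ from \Cref{W2gens} is identified with the pushforward of a Poisson structure on one of the embedded surfaces, with \Cref{iso1-5} absorbing the redundancy of $e_5$. If anything, your version is slightly more careful than the paper's own proof, which leaves $e_5$ untreated and appears to swap the roles of $e_2$ and $e_3$ in its displayed identifications, whereas your assignment ($j_1\mapsto e_4$; $j_2(Z_0,\pi_i)\mapsto e_1,e_3,e_5$; $j_0\mapsto e_2$ after the forced multiplication by $u_1$ to restore global holomorphy) is the internally consistent one.
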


\begin{proof}
Let $j_1(Z_2)$ (resp. $j_2(Z_0)$) be the embedding of the surface $Z_2$ (resp. $Z_0$) into  $W_2$ 
cut out by $u_2=v_2=0$ (resp. $u_1=\xi^2v_1=0$). 
 Then Poisson structure induced by the first embedding is:
 $$(j_1)_*(1)_U= \left[\begin{matrix}0 \\ 0 \\ 1\end{matrix}\right]_U,\quad 
(j_1)_*(-\xi)_V= \left[\begin{matrix}0 \\ 0 \\ -1\end{matrix}\right]_V, \quad \text{hence} \quad 
e_4\vert_{j_1(Z_2)}=(j_1)_*\pi_0. $$
The Poisson structures induced  by the second embedding are
$$(j_2)_*(1)_U = \left[\begin{matrix}0 \\ 1 \\ 0\end{matrix}\right]_U, \quad 
(j_2)_*(-\xi^2)_V = \left[\begin{matrix}0 \\ -\xi^2\\ 0\end{matrix}\right]_V, \quad \text{hence} \quad
 e_1\vert_{j_2(Z_0)}=(j_2)_*\pi_0,$$
and analogously
$e_2\vert_{j_2(Z_0)} =(j_2)_*(s\pi_1)$.
Since  $j_0$ has image at  $u=\xi=1$, we obtain

$(j_0)_*(1)_U=   \left[\begin{matrix}1 \\ 0 \\ 0\end{matrix}\right]_U , \quad  (j_0)_*(1)_V= \left[\begin{matrix}1 \\ 0 \\ 0\end{matrix}\right]_V 
\Leftrightarrow  (j_0)_*(z)_U=   \left[\begin{matrix}u_1 \\ 0 \\ 0\end{matrix}\right]_V , \quad  (j_0)_*(\xi)_V= \left[\begin{matrix}v_1 \\ 0 \\ 0\end{matrix}\right]_V ,$

\noindent hence 
 $ e_3\vert_{j_0(Z_0)}=(j_0)_*\pi_0.$
\end{proof}

\subsection{Symplectic foliations on \texorpdfstring{$\bm{W_2}$}{W\_2}}

In this section we perform the cohomological calculations, and identify the leaves of the symplectic foliation associated 
to each Poisson structure on $W_2$. 

\begin{lemma}\label{coh-beta0}
$H^0(W_2, e_1) = \{ f \in \mathcal{O}(W_2) / f = f(u_1) \}$
\end{lemma}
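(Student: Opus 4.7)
The plan is to unwind the definition: $H^0(W_2, e_1) = \mathrm{Cas}(e_1)$ is the algebra of holomorphic functions $f$ on $W_2$ killed by the Poisson differential $d_{e_1} = [e_1,\cdot]$. By \Cref{W2gens}, in the $U$-chart we have $e_1 = b_1 = \partial_{u_2}\wedge \partial_z$, so the associated Hamiltonian vector field reads $X_f = (\partial_{u_2}f)\,\partial_z - (\partial_z f)\,\partial_{u_2}$. The Casimir condition $X_f \equiv 0$ is therefore equivalent, on the whole $U$-chart, to the pair of first-order PDEs $\partial_z f = 0$ and $\partial_{u_2} f = 0$.

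I would then observe that these two equations force $f|_U$ to be a holomorphic function of $u_1$ alone, i.e.\ $f|_U = \varphi(u_1)$. To upgrade this local statement to a global one, I would use the transition formula $u_1 = \xi^2 v_1$ of \Cref{WKdef}: any entire function $\varphi$ of a single complex variable therefore extends to a globally defined holomorphic function on $W_2$, since $\varphi(u_1)$ matches $\varphi(\xi^2 v_1)$ on $V$. Conversely, every global Casimir restricts on $U$ to a function satisfying the two PDEs above and is therefore of the stated form. Putting these two implications together yields the equality of sets claimed in the lemma.

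The main obstacle is essentially trivial: the Casimir equations are as simple as possible and the coordinate extension $u_1 = \xi^2 v_1$ is explicit and holomorphic on all of $V$. The only item requiring any care is matching the convergence of the power-series expansion of $\varphi(u_1)$ on $U$ with the expansion in $\xi$ and $v_1$ on $V$, but this compatibility is the same one already used when describing the ring $R$ of global holomorphic functions preceding \Cref{W2gens}, where $u_1$ is listed among the generators.
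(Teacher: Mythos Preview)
Your argument is correct and follows essentially the same route as the paper: compute the Schouten bracket $[f,e_1]$ (equivalently, the Hamiltonian vector field of $f$) on the $U$-chart using $e_1=b_1=\partial_{u_2}\wedge\partial_z$, read off the two PDEs $\partial_z f=0$, $\partial_{u_2}f=0$, and conclude that $f$ depends only on $u_1$. Your added remark on globalizing via $u_1=\xi^2 v_1$ is a harmless elaboration the paper leaves implicit.
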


\begin{proof}
Recall that $e_1 = - \partial_0 \wedge \partial_2$. Then we have 
\[
[f, e_1] = -[f, \partial_0 \wedge \partial_2] = -[f, \partial_0] \wedge \partial_2 + \partial_0 \wedge [f, \partial_2] =
 \dfrac{\partial f}{\partial u_2} \partial_0 - \dfrac{\partial f}{\partial z} \partial_2,
\]
so that $ f \in \ker [e_1, \cdot] $ if and only if 
$\displaystyle 
\dfrac{\partial f}{\partial z} = \dfrac{\partial f}{\partial u_2} = 0,
$
i.e., $f$ does not depend on $z$ and $u_2$.
\end{proof}

\begin{lemma}\label{coh-alpha}
$H^0(W_2, e_2 ) = \{ f \in \mathcal{O}(W_2) / f = f(z) \}$
\end{lemma}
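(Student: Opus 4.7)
The plan is to follow the same template as the proof of Lemma~\ref{coh-beta0}, just replacing $e_1$ by $e_2$. First, reading off \Cref{W2gens}, the bivector $e_2$ on the $U$-chart is $e_2 = u_1 \, \partial_1 \wedge \partial_2$, in terms of the basis $b_0, b_1, b_2$ introduced in Section~2. Then, using the Leibniz rule $[f, A \wedge B] = [f, A] \wedge B - A \wedge [f, B]$ together with the elementary identity $[f, X] = X(f)$ for a vector field $X$ (both used implicitly in Lemma~\ref{coh-beta0}), I would expand
\[
[f, e_2] \;=\; [f, u_1 \partial_1] \wedge \partial_2 \;-\; (u_1 \partial_1) \wedge [f, \partial_2]
        \;=\; u_1 \frac{\partial f}{\partial u_1} \, \partial_2 \;-\; u_1 \frac{\partial f}{\partial u_2} \, \partial_1.
\]

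The Casimir condition $[f, e_2] = 0$ then becomes the pair of equations $u_1 \, \partial f / \partial u_1 = 0$ and $u_1 \, \partial f / \partial u_2 = 0$ as identities between holomorphic functions on $U$. The one step that is not purely formal is passing from $u_1 \cdot g = 0$ to $g = 0$ for $g$ either of the partial derivatives: since each such $g$ is holomorphic on $U$ and vanishes on the dense open set $\{u_1 \neq 0\}$, the identity principle forces $g \equiv 0$ on all of $U$.

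Consequently $\partial f / \partial u_1 = \partial f / \partial u_2 = 0$, so $f$ is a function of $z$ alone, which is the claim. The only substantive difference from the proof of Lemma~\ref{coh-beta0} is the overall factor of $u_1$ in $e_2$, which is handled by the one-line density argument above, so I do not anticipate any real obstacle.
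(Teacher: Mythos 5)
Your proposal is correct and follows essentially the same computation as the paper's proof: expand $[f, e_2]$ for $e_2 = u_1\,\partial_1 \wedge \partial_2$ via the Leibniz rule and read off the vanishing of $\partial f/\partial u_1$ and $\partial f/\partial u_2$. In fact you are slightly more careful than the paper, which silently cancels the overall factor of $u_1$; your identity-principle argument fills that small gap.
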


\begin{proof}
Recall that $e_2 = u_1 \partial_1 \wedge \partial_2$. Then we have 
\[
[f, e_2] = [f, u_1 \partial_1 \wedge \partial_2] = [f, u_1 \partial_1] \wedge \partial_2 - u_1 \partial_1 \wedge [f, \partial_2] = 
u_1 \dfrac{\partial f}{\partial u_1} \partial_2 - u_1\dfrac{\partial f}{\partial u_2} \partial_1, =
\]
so that  $ f \in \ker [e_3, \cdot] $ if and only if 
$\displaystyle
\dfrac{\partial f}{\partial u_1} = \dfrac{\partial f}{\partial u_2} = 0, 
$
i.e., $f$ does not depend on $u_1$ and $u_2$. 
\end{proof}

\begin{lemma}\label{coh-beta1}
$H^0(W_2, e_3) = \{ f \in \mathcal{O}(W_2) / f = f(u_1) \}$
\end{lemma}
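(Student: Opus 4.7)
My plan is to mirror the approach used in the proofs of Lemmas \ref{coh-beta0} and \ref{coh-alpha}: compute the Schouten--Nijenhuis bracket $[f, e_3]$ for an arbitrary global function $f \in \mathcal{O}(W_2)$, and read off the partial differential equations that characterize the Casimirs.

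First I would rewrite $e_3$ in the wedge basis. From \Cref{W2gens}, the coefficient vector of $e_3$ is $(0, z, 0)^T$, so in the basis $(b_0, b_1, b_2)$ we have $e_3 = z \, b_1 = z \, \partial_2 \wedge \partial_0 = -z \, \partial_0 \wedge \partial_2$. Then, applying the graded Leibniz rule exactly as in the previous two lemmas,
\[ [f, e_3] = -[f, z\partial_0] \wedge \partial_2 + z\partial_0 \wedge [f, \partial_2] = z \frac{\partial f}{\partial u_2} \partial_0 - z \frac{\partial f}{\partial z} \partial_2. \]

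Setting this to zero in the $U$-chart and using $\mathcal{O}(U)$-linear independence of $\partial_0, \partial_2$ together with the fact that $\mathcal{O}(U) = \mathbb{C}\{z, u_1, u_2\}$ is an integral domain in which $z$ is a nonzero divisor, I would conclude that $\partial f/\partial z = \partial f/\partial u_2 = 0$, i.e.\ $f = f(u_1)$. Conversely, any holomorphic function of $u_1$ alone is a valid element of $\mathcal{O}(W_2)$ (since $u_1 \in R$) and annihilates $e_3$ by direct inspection.

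I do not anticipate any real obstacle: the argument is essentially a carbon copy of the two preceding lemmas, with the only mild subtlety being the $z$ factor in front, which is harmless because we work in a domain. The result is consistent with the $W_2$ table: both $e_1$ and $e_3$ have Casimir ring $\{f(u_1)\}$, even though they differ on their degeneracy loci (\Cref{degeneracy2}).
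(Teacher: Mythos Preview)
Your proposal is correct and essentially identical to the paper's own proof: both compute $[f,e_3]$ via the Leibniz rule to obtain $z\,\partial f/\partial u_2\,\partial_0 - z\,\partial f/\partial z\,\partial_2$ and conclude $f=f(u_1)$. Your remark that the factor $z$ is a nonzero divisor in $\mathcal{O}(U)$ is a small but welcome justification that the paper leaves implicit.
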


\begin{proof}
Recall that $e_3 = - z\partial_0 \wedge \partial_2$. Then we have 
\[
[f, e_3] = -[f, -z \partial_0 \wedge \partial_2] = -[f, z\partial_0] \wedge \partial_2 + z\partial_0 \wedge [f, \partial_2] = z\dfrac{\partial f}{\partial u_2} \partial_0 - z\dfrac{\partial f}{\partial z} \partial_2,
\]
so  that  $ f \in \ker [e_3, \cdot] $ if and only if 
$\displaystyle
\dfrac{\partial f}{\partial z} = \dfrac{\partial f}{\partial u_2} = 0,
$
i.e., $f$ does not depend on $z$ and $u_2$. 
\end{proof}

\begin{lemma}\label{coh-gamma}
$H^0(W_2, e_4) = \{ f \in \mathcal{O}(W_2) / f = f(u_2) \}$.
\end{lemma}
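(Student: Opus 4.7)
The plan is to follow exactly the template used for the three preceding cohomology lemmas (Lemmas on $e_1$, $e_2$, $e_3$). First I would read off from the generator list that $e_4$, whose coefficient vector on the $U$-chart is $(0,0,1)$ in the basis $(b_0,b_1,b_2)=(\partial_1\wedge\partial_2,\partial_2\wedge\partial_0,\partial_0\wedge\partial_1)$, is simply
\[
e_4 = \partial_0 \wedge \partial_1.
\]
By definition, $H^0(W_2,e_4)=\ker[e_4,\cdot\,]$, so I need to compute the Schouten--Nijenhuis bracket of a holomorphic function $f\in\mathcal O(W_2)$ with $e_4$.

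Next I would use the graded Leibniz rule for the SN-bracket with a degree-$0$ element, exactly as in the proofs of Lemmas \ref{coh-beta0}--\ref{coh-beta1}:
\[
[f,e_4] = [f,\partial_0]\wedge\partial_1 - \partial_0\wedge[f,\partial_1]
= -\frac{\partial f}{\partial z}\,\partial_1 + \frac{\partial f}{\partial u_1}\,\partial_0.
\]
Therefore $f\in\ker[e_4,\cdot\,]$ if and only if $\partial f/\partial z=\partial f/\partial u_1=0$, i.e.\ $f$ depends only on $u_2$.

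Finally I would note a small point of well-definedness that is implicit in the earlier lemmas: the Casimir has to be a global function on $W_2$, not just a local one on $U$. For $W_2$ the change of coordinates gives $v_2 = u_2$, so $u_2$ extends to a globally defined holomorphic function on $W_2 = Z_2\times\mathbb C$, and any convergent power series $f(u_2)$ is a genuine element of $\mathcal O(W_2)$. Conversely any global $f\in\mathcal O(W_2)$ with $\partial_z f=\partial_{u_1}f=0$ is of this form. No step here is a serious obstacle; the only care needed is to apply the sign conventions in the Leibniz rule consistently and to check that the resulting functions are indeed global, which in this case is immediate from $u_2=v_2$.
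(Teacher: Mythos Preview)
Your proof is correct and follows essentially the same approach as the paper: identify $e_4=\partial_0\wedge\partial_1$, expand $[f,e_4]$ via the Leibniz rule, and read off that the Casimirs are the functions of $u_2$ alone. Your expression differs from the paper's by an overall sign (a harmless convention issue), and your extra remark that $u_2=v_2$ is global on $W_2$ is a welcome clarification that the paper leaves implicit.
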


\begin{proof}
Recall that $e_4 = \partial_0 \wedge \partial_1$. Then we have
\[
[f, e_4] = [ f, \partial_0 \wedge \partial_1 ] = [f, \partial_0] \wedge \partial_1 - \partial_0 \wedge [f, \partial_1] =
 \dfrac{\partial f}{\partial z} \partial_1 - \dfrac{\partial f}{\partial u_1} \partial_0, 
\]
so that $f \in \ker [e_4, \cdot]$ if and only if 
$\displaystyle
\dfrac{\partial f}{\partial z} = \dfrac{\partial f}{\partial u_1} = 0,
$
i.e., $f$ does not depend on $z$ and $u_1$.
\end{proof}

We then obtain the description of the symplectic foliations on $W_2$ determined by these Poisson structures.

\begin{theorem}\label{foliations2}
The symplectic foliations on $W_2$ 
have $0$-dimensional leaves consisting of single points over each 
of their corresponding  degeneracy loci described in \Cref{degeneracy2},
and their generic leaves, which are $2$-dimensional, are as follows:
\begin{itemize}
\item surfaces of constant $u_1$  for $e_1$ and $e_3$, one of them isomorphic to $\mathbb P^1 \times \mathbb C$.
\item isomorphic to $\mathbb C-\{0\} \times \mathbb C$ for $e_2$ (contained in the fibers of the projection to $\mathbb P^1$).
\item isomorphic to the surface $Z_2$ and cut out by $u_2=v_2$ constant for $e_4$.
\end{itemize}
\end{theorem}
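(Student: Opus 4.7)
The plan is to combine two ingredients. By \Cref{leaves}, on the three-fold $W_2$ every symplectic leaf has complex dimension $0$ or $2$, with the $0$-dimensional leaves being precisely the points of $D(e_i)$; together with \Cref{degeneracy2} this settles the first half of the statement. For the $2$-dimensional leaves, I use that Casimir functions are constant along leaves, so the symplectic foliation refines the level-set foliation of the Casimir ring. The cohomology calculations in Lemmata \ref{coh-beta0}, \ref{coh-alpha}, \ref{coh-beta1}, \ref{coh-gamma} identify that ring for each structure as generated by a single coordinate --- $u_1$ for $e_1$ and $e_3$, $z$ for $e_2$, and $u_2$ for $e_4$. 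Since each level set of such a coordinate is already a $2$-dimensional analytic subset, it must coincide with the (connected component of the) symplectic leaf through any non-degenerate point of $e_i$.

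The remaining task is to recognize these level surfaces as abstract complex manifolds. For $e_4$ this is immediate from the product decomposition $W_2 \simeq Z_2 \times \mathbb{C}$ recorded at the start of the section, under which $u_2 = v_2$ is the projection to the second factor, so the fibres are copies of $Z_2$. For $e_2$, the function $z$ is pulled back from a chart of $\mathbb{P}^1$, and its fibres are the bundle fibres $\mathbb{C}^2_{(u_1, u_2)}$ of $W_2 \to \mathbb{P}^1$; removing the degeneracy locus $\{u_1 = 0\}$ inside each fibre yields $\mathbb{C}^* \times \mathbb{C}$, and by construction each such leaf sits inside a bundle fibre as the statement requires. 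For $e_1$ and $e_3$, one glues the $U$-chart slice $\{u_1 = c\}$ to the $V$-chart slice $\{\xi^2 v_1 = c\}$: for $c \neq 0$ the $V$-chart equation forces $\xi \neq 0$, so the slice is just the chart $\mathbb{C}^2_{(z, u_2)}$, while the special slice $c = 0$ picks up the zero section of the $\mathcal{O}_{\mathbb{P}^1}(-2)$-summand, which is isomorphic to $\mathbb{P}^1 \times \mathbb{C}$ and accounts for the exceptional leaf singled out in the statement.

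The main obstacle will be the chart-by-chart matching in the $e_1$ and $e_3$ cases, where the $V$-chart locus $\{\xi^2 v_1 = c\}$ behaves qualitatively differently at $c = 0$ and $c \neq 0$, and where one must verify that after deleting the (possibly positive-dimensional) degeneracy loci from \Cref{degeneracy2} the resulting set is still a connected $2$-dimensional complex submanifold matching the stated model. Once this geometric identification is carried out, the listed isomorphism types follow directly from the Casimir description together with \Cref{leaves}.
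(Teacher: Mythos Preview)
Your approach is essentially the same as the paper's: the paper records no separate proof for this theorem, treating it as an immediate consequence of the Casimir computations in Lemmata \ref{coh-beta0}--\ref{coh-gamma} together with \Cref{leaves} and \Cref{degeneracy2} (exactly as in the parallel proof of \Cref{W3foliation}). You have simply made this reasoning explicit and added the chart-by-chart identification of the level surfaces, which the paper leaves to the reader.
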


\vspace{3mm}

\section{Poisson structures on \texorpdfstring{$W_3$}{W\_3}}

The Calabi--Yau threefold we consider in this section is
 $W_3\ce \Tot (\mathcal O_{\mathbb P^1}(-3) \oplus \mathcal O_{\mathbb P^1}(1))$.
We will carry out calculations using the canonical coordinates $W_1= U \cup V$ 
with $U \simeq \mathbb C^3 \simeq V$ with coordinates $U = \{z,u_1,u_2\}$ and $V= \{\xi, v_1,v_2\}$
with change of coordinates on $U\cap V \simeq \mathbb C^* \times \mathbb C\times \mathbb C$ given by 
\[ \bigl\{ \xi = z^{-1} \text{ ,\quad}
         v_1 = z^3 u_1 \text{ ,\quad}
         v_2 =  z^{-1}u_2 \bigr\} \text{ ,} \]
so that $z = \xi^{-1}$, $u_1 = \xi^3 v_1$, and $u_2 =  \xi^{-1} v_2$.

In these coordinates, the transition matrix for the tangent bundle of $W_3$ is the Jacobian matrix of the change of 
coordinates, and taking $\Lambda^2$ we obtain the transition matrix for the second exterior power of the tangent bundle:
\[
\left[
\begin{matrix}
z^2 & -3zu_1 & zu_2 \\
0 & -z^{-3} & 0 \\
0 & 0 & -z
\end{matrix}
\right]
.\]

Let $\imath\colon U \hookrightarrow W_3$ denote the inclusion. We
actually demand that the coefficients of $q$ are functions on all of
$W_3$, i.e.\ that they should be in the image of $\imath^* \colon
R \ce H^0(W_3;\mathcal{O}_{W_3}) \to H^0(U;\mathcal{O}_{U})$.
(We will not distinguish between $R$ and its image over $U$:
we are only working in local coordinates on $U$, but with the understanding
that we are describing global objects on $W_2$.)
In local coordinates on $U$, $R$ consists of convergent power series in
\[ \bigl\{ 1, u_1,zu_1,z^2u_1, z^3u_1,u_1u_2, zu_1u_2, z^2u_1u_2   \bigr\} \text{ .} \]

Holomorphic Poisson structures on $W_3$ are parametrized by elements of
 $M_3\ce H^0(W_2,\Lambda^2TW_3)$, which is  infinite dimensional as a vector 
 space over $\mathbb C$. We will describe the structure of $M_3$ as 
 a module over global functions.

\begin{lemma}\label{W3gens} The space  $M_3 = H^0(W_3,\Lambda^2TW_3)$
parametrizing all holomorphic bivector fields on $W_3$ 
has the following structure as a module over global holomorphic functions:
$$M_3= \mathbb C<e_1, \dots, e_{13}
>/R 
$$
     with the set of relations $R$ given by    
         $$\begin{array}{l}
u_1  e_2  - u_1u_2 e_1 \\
u_1  e_{10} -u_1 u_2 e_3 \\
u_1e_{13} - u_1u_2e_7 \\
\end{array}
\quad\quad \begin{array}{l}
zu_1e_{12} - u_1u_2 e_6 \\
zu_1e_{13} - u_1u_2e_8  \\
u_1e_{11} - zu_1 e_{10} \\
\end{array}
\quad \begin{array}{l}
u_1 e_4 - z u_1e_3   \\
 u_1  e_5 - zu_1e_4  \\
u_1e_8 - zu_1e_7  \\
\end{array} \quad 
\begin{array}{l}
u_1 e_6 - zu_1 e_5 - 3z^2u_1e_1 \\
u_1e_9 - zu_1e_8 + zu_1e_2 \\
u_1e_{12} - zu_1e_{11} - 3zu_1e_1 .\\
\end{array}$$  
\end{lemma}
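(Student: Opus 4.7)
The plan is to mirror the strategy used in \Cref{biW1,W2gens}: we compute global sections of $\Lambda^2 TW_3$ via \v{C}ech cohomology, expand candidate sections as power series on the $U$-chart, and then convert the resulting $\mathbb C$-vector space description into a presentation as a module over $R = H^0(W_3,\mathcal O_{W_3})$.

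Concretely, I would look for triples $(a,b,c)$ of holomorphic functions on $U$, written as power series
\[
a = \sum_{l,i,s \geq 0} a_{lis}\, z^l u_1^i u_2^s,
\]
and similarly for $b$ and $c$, such that applying the given transition matrix to $(a,b,c)^T$ yields a triple holomorphic on $V$. Following the pattern seen in the previous cases, I would proceed formal neighborhood by formal neighborhood along $\mathbb P^1 \subset W_3$, extracting the admissible monomials and stopping at the first neighborhood where no new module generator appears. Because the summand $\mathcal O_{\mathbb P^1}(-3)$ permits higher negative powers of $\xi$ to be balanced in the first component of the transition matrix, noticeably more monomial generators will survive than in the computations for $W_1$ and $W_2$, so the list of $\mathbb{C}$-generators will be considerably longer before passing to the module structure.

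Once these $\mathbb C$-generators are in hand, I would recognize which ones are $R$-multiples of others: the ring $R$ is generated by $\{1, u_1, zu_1, z^2u_1, z^3u_1, u_1u_2, zu_1u_2, z^2u_1u_2\}$, so a large portion of the $\mathbb{C}$-generators will be absorbed into the module action, leaving exactly the 13 generators $e_1, \dots, e_{13}$. The 12 listed relations should then arise as the minimal identities needed to express natural coincidences between these generators (for example, between bivectors whose first components agree after multiplication by $u_1$ or $u_1 u_2$, or between those related by shifts of powers of $z$ in the middle component) once both sides are forced to lie in $R \cdot \{e_i\}$.

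The main obstacle will be the bookkeeping: organising the combinatorics of admissible monomials, checking that no further generator appears in higher formal neighborhoods (by verifying that every admissible triple at a new level is already an $R$-combination of the $e_i$), and confirming that the 12 displayed relations generate the full relation submodule. As in the proof of \Cref{biW1}, this last step is most safely handled either by a careful reduction argument or by a computer algebra verification, using the obvious relations obtained at the $\mathbb{C}$-vector-space level as the starting input.
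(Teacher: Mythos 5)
Your proposal follows essentially the same route as the paper's proof: a \v{C}ech computation with the transition matrix of $\Lambda^2 TW_3$, a power-series expansion worked out formal neighborhood by formal neighborhood (the paper finds that the second neighborhood suffices, yielding 42 monomial sections), absorption of redundant generators into the $R$-module action to reach the 13 generators, and finally the observation that the na\"ive identities among the $e_i$ involve the non-global functions $z$ and $u_2$ and must be multiplied by $u_1$ to become genuine relations over $\mathcal O(W_3)$. The only point worth making explicit, which you gesture at but the paper states directly, is that last multiplication-by-$u_1$ step; otherwise the approaches coincide, including the reliance on computer algebra to confirm minimality of the presentation.
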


\begin{remark}
There is no natural  way to simplify the presentation of $M_3$, 
in fact, computer algebra calculations (for example in Macaulay2) 
 also give the same expression for the   minimal presentation of $M_3$. 
So, we really need all 13 generators and 13 relations to describe the space of  Poisson structures 
on $W_3$ as a module over global functions. As a complex vector space it is  infinite dimensional.
\end{remark}

\begin{proof}[Proof of \Cref{W3gens}]
To find $H^0(W_3,\Lambda^2TW_3)$
we need global holomorphic sections  so that we must find $a,b,c$  holomorphic on $U$ such that 
$$
\left[
\begin{matrix}
z^2 & -3zu_1 & zu_2 \\
0 & -z^{-3} & 0 \\
0 & 0 & -z
\end{matrix}
\right]
\left[
\begin{matrix}
a \\
b \\
c
\end{matrix}
\right]
$$
is holomorphic on $V$.
To start with 
$\displaystyle a= \sum_{l=0}^\infty\sum_{i=0}^\infty\sum_{s=0}^\infty a_{lis} z^lu_1^ iu_2^s$ 
and similar for $b$ and $c$.     
     
     We will give a presentation of $M\ce H^0(W_3,\Lambda^2TW_3)$ as a module over global sections. 
     Here we will need to perform calculations up to at least neighborhood 2, unlike the case of $W_2$ 
     where neighborhood 1 was enough. 
     Thus, to calculate the module structure here, 
     we need   the expressions of  sections on the second formal neighborhood, which 
     consist of linear combinations of the following 42 terms:
\begin{flalign*}
	\left[\begin{matrix}
		0 \\
		0 \\
  		z^lu_1
	\end{matrix}\right]_{0\leq l\leq 1}; 
	\left\{	\left[\begin{matrix}
		0 \\
		z^l \\
  		0
	\end{matrix}\right]; 	
	\left[\begin{matrix}
		0 \\
		z^lu_2 \\
  		0
	\end{matrix}\right] \right\}_{0\leq l\leq 2};	
	\left\{ \left[\begin{matrix}
		z^lu_1^2 \\
		0 \\
  		0
	\end{matrix}\right]; 	
	\left[\begin{matrix}
		0 \\
		0 \\
  		z^lu_1^2
	\end{matrix}\right];
		\left[\begin{matrix}
		0 \\
		z^lu_1u_2 \\
  		0
	\end{matrix}\right] \right\}_{0\leq l\leq 4};		
	\left[\begin{matrix}
		0 \\
		z^{l}u_1 \\
  		0
	\end{matrix}\right]_{0\leq l\leq 5};	
	\left[\begin{matrix}
		0 \\
		z^lu_1^2 \\
  		0
	\end{matrix}\right]_{0\leq l\leq 8}; &&
\end{flalign*}
\begin{flalign*}
\left[\begin{matrix}
		u_1 \\
		0 \\
  		0
	\end{matrix}\right];  
\left[\begin{matrix}
		u_1u_2 \\
		0 \\
  		0
	\end{matrix}\right];
\left[\begin{matrix}
		0 \\
		0 \\
  		u_1u_2
	\end{matrix}\right];		
\left[\begin{matrix}
		3z ^2u_1 \\
		z^3 \\
  		0
	\end{matrix}\right]; 	
		\left[\begin{matrix}
		3zu_1u_2 \\
		z^2u_2 \\
  		0
	\end{matrix}\right];	
	\left[\begin{matrix}
		3z^5u_1^2 \\
		z^{6}u_1 \\
  		0
	\end{matrix}\right]; 
		\left[\begin{matrix}
		3z^8u_1^3 \\
		z^9u_1^2 \\
  		0
	\end{matrix}\right];
	\left[\begin{matrix}
		3z^4u_1^2u_2 \\
		z^5u_1u_2 \\
  		0
	\end{matrix}\right];	
		\left[\begin{matrix}
		-zu_1u_2 \\
		0 \\
  		z^2u_1
	\end{matrix}\right];
		\left[\begin{matrix}
		-u_1u_2^2 \\
		0 \\
  		zu_1u_2
	\end{matrix}\right] .&&
\end{flalign*}
\vspace{3mm}

But, upon removing all vectors that can be obtained from others by multiplying by 
a global function we reduce the expression of a global section  to:
$$
\left[\begin{matrix}
		a \\
		b \\
  		c
	\end{matrix}\right] = 
a_{010}
	\left[\begin{matrix}
		u_1 \\
		0 \\
  		0
	\end{matrix}\right]  
+a_{011}
	\left[\begin{matrix}
		u_1u_2 \\
		0 \\
  		0
	\end{matrix}\right] 
+b_{000}
	\left[\begin{matrix}
		0 \\
		1 \\
  		0
	\end{matrix}\right] 
+b_{100}
	\left[\begin{matrix}
		0 \\
		z \\
  		0
	\end{matrix}\right]
+b_{200}
	\left[\begin{matrix}
		0 \\
		z^2 \\
  		0
	\end{matrix}\right]+	
b_{300}
	\left[\begin{matrix}
		3z ^2u_1 \\
		z^3 \\
  		0
	\end{matrix}\right]	
+b_{001}
	\left[\begin{matrix}
		0 \\
		u_2 \\
  		0
	\end{matrix}\right]	+$$
	$$
+b_{101}
	\left[\begin{matrix}
		0 \\
		zu_2 \\
  		0
	\end{matrix}\right]	
+b_{201}
	\left[\begin{matrix}
		3zu_1u_2 \\
		z^2u_2 \\
  		0
	\end{matrix}\right]	
+c_{010}
	\left[\begin{matrix}
		0 \\
		0 \\
  		u_1
	\end{matrix}\right] 
+c_{110}
	\left[\begin{matrix}
		0 \\
		0 \\
  		zu_1
	\end{matrix}\right] 
	+c_{210}
	\left[\begin{matrix}
		-zu_1u_2 \\
		0 \\
  		z^2u_1
	\end{matrix}\right]
+c_{011}
	\left[\begin{matrix}
		0 \\
		0 \\
  		u_1u_2
	\end{matrix}\right].
$$


We now need the module structure of $M = H^0(W_3,TW_3)$ as a module over global functions. 
So, we first write the generators and relations among them. 
We establish the notation for the generators:
\[
e_1=
	\left[\begin{matrix}
		u_1 \\
		0 \\
  		0
	\end{matrix}\right],
e_2=	
	\left[\begin{matrix}
		u_1u_2 \\
		0 \\
  		0
	\end{matrix}\right],
e_3=
	\left[\begin{matrix}
		0 \\
		1 \\
  		0
	\end{matrix}\right],
e_4=
	\left[\begin{matrix}
		0 \\
		z \\
  		0
	\end{matrix}\right],
e_5=
	\left[\begin{matrix}
		0 \\
		z^2 \\
  		0
	\end{matrix}\right]	,
e_6=
	\left[\begin{matrix}
		3z ^2u_1 \\
		z^3 \\
  		0
	\end{matrix}\right] ,
e_7=
	\left[\begin{matrix}
		0 \\
		0 \\
  		u_1
	\end{matrix}\right] ,
\]
\[	
e_8=
	\left[\begin{matrix}
		0 \\
		0 \\
  		zu_1
	\end{matrix}\right],
e_9=
	\left[\begin{matrix}
		-zu_1u_2 \\
		0 \\
  		z^2u_1
	\end{matrix}\right],	
e_{10}=
	\left[\begin{matrix}
		0 \\
		u_2 \\
  		0
	\end{matrix}\right],
e_{11}=
	\left[\begin{matrix}
		0 \\
		zu_2 \\
  		0
	\end{matrix}\right],
e_{12}=
	\left[\begin{matrix}
		3zu_1u_2 \\
		z^2u_2 \\
  		0
	\end{matrix}\right],
e_{13}=
	\left[\begin{matrix}
		0 \\
		0 \\
  		u_1u_2
	\end{matrix}\right].
\]

These then satisfy the equations:
       $$\begin{array}{l}
  e_2  - u_2 e_1=0 \\
  e_{10} - u_2 e_3 =0\\
e_{13} - u_2e_7=0 \\
\end{array}
\quad\quad \begin{array}{l}
ze_{12} - u_2 e_6=0 \\
ze_{13} - u_2e_8=0  \\
e_{11} - z e_{10}=0 \\
\end{array}
\quad \begin{array}{l}
 e_4 - z e_3 =0  \\
   e_5 - ze_4 =0 \\
e_8 - ze_7=0  \\
\end{array} \quad 
\begin{array}{l}
e_6 - z e_5 - 3z^2e_1=0 \\
e_9 - ze_8 + ze_2=0 \\
e_{12} - ze_{11} - 3ze_1=0 .\\
\end{array}$$  

Since neither $z$ nor $u_2$ are global functions, 
we multiply the equations by $u_1$ to obtain relations over 
$\mathcal O(W_3)$, obtaining the claimed module structure.
\end{proof}

We  now proceed  to investigate the question of isomorphism of Poisson structures.

\begin{lemma}\label{isos3}
There are isomorphisms
$e_3 \simeq e_6$,
$e_7 \simeq e_9$, and
$e_{10} \simeq e_{12}$.
\end{lemma}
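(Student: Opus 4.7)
The plan is to produce, for each of the three claimed isomorphisms, a single explicit biholomorphism $W_3\to W_3$ that pushes one Poisson structure onto the other, in the spirit of the proof of \Cref{iso1-5} for $W_2$. All three cases will follow uniformly from the same construction.

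First I would introduce the chart-swap involution $\phi\colon W_3\to W_3$: for $p\in U$, set $\phi(p)$ to be the point whose $V$-coordinates numerically equal the $U$-coordinates of $p$, and define $\phi$ symmetrically on $V$. On $U\cap V$ this reads $(z,u_1,u_2)\mapsto(z^{-1},z^3u_1,z^{-1}u_2)$ in $U$-coordinates; the apparent pole at $z=0$ is harmless because the image then lies in $V\setminus U$, where $\phi$ is literally the identity under the $(U\to V)$-chart identification. The main technical step is verifying that $\phi$ is a well-defined holomorphic involution of $W_3$: this amounts to checking consistency of the two local descriptions on the overlap, after which everything else is routine sign-bookkeeping.

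Next, applying the transition matrix for $\Lambda^2 TW_3$ recorded earlier, I would rewrite $e_6,e_9,e_{12}$ on the $V$-chart as $-\partial_{v_2}\wedge\partial_\xi$, $-v_1\,\partial_\xi\wedge\partial_{v_1}$, and $-v_2\,\partial_{v_2}\wedge\partial_\xi$ respectively. Since by construction $\phi_*\partial_z=\partial_\xi$, $\phi_*\partial_{u_i}=\partial_{v_i}$, and $\phi_*u_i=v_i$, pushing forward the $U$-chart expressions $e_3=\partial_{u_2}\wedge\partial_z$, $e_7=u_1\,\partial_z\wedge\partial_{u_1}$, $e_{10}=u_2\,\partial_{u_2}\wedge\partial_z$ immediately yields $\phi_*e_3=-e_6$, $\phi_*e_7=-e_9$, and $\phi_*e_{10}=-e_{12}$.

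Finally, to absorb the signs I would compose $\phi$ with the order-two automorphism $\sigma\colon(z,u_1,u_2)\mapsto(-z,u_1,-u_2)$, which extends globally since on the $V$-chart it reads $(\xi,v_1,v_2)\mapsto(-\xi,-v_1,v_2)$. A direct coefficient calculation shows that $\sigma_*$ sends each of $e_6,e_9,e_{12}$ to its negative, so $(\sigma\circ\phi)_*e_3=e_6$, $(\sigma\circ\phi)_*e_7=e_9$, and $(\sigma\circ\phi)_*e_{10}=e_{12}$, establishing the three Poisson isomorphisms simultaneously.
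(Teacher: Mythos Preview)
Your proposal is correct and follows essentially the same approach as the paper: both use the chart-exchange involution together with the transition matrix of $\Lambda^2TW_3$, exactly as in the proof of \Cref{iso1-5}. The only difference is that you go one step further and explicitly produce the sign-fixing automorphism $\sigma\colon(z,u_1,u_2)\mapsto(-z,u_1,-u_2)$, whereas the paper (as in \Cref{iso1-5}) simply asserts that $(W_3,e_j)\simeq(W_3,-e_j)$; your extra step is a welcome clarification but not a different method.
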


\begin{proof}
For each isomorphism use the transition function of $\Lambda^2TW_3$ and then exchange the $U$ and $V$ charts
as in the proof of \Cref{iso1-5}. 
\end{proof}

There are  natural inclusions of the surfaces $\mathbb C$, $Z_{-1}$, and $Z_3$ into  $W_3$: 

\begin{notation}
We denote by $j_s $ for $s=0,1,2$ the inclusions of $\mathbb C^2$, $Z_{-1}$, and $Z_3$ into the threefold $W_3$.
Hence, in coordinates we have: 
\begin{itemize}
\item   $j_0\colon \mathbb C^2 \rightarrow W_3$ includes $\mathbb C^2$ as the fiber  $z=0$,  
taking $\frac{\partial}{\partial u} \wedge \frac{\partial}{\partial v} \mapsto \partial_1 \wedge \partial_2$
\item $j_1\colon Z_3 \rightarrow W_3$ includes $Z_3$ as  $u_2=0$, taking $\frac{\partial}{\partial z} \wedge \frac{\partial}{\partial u} \mapsto \partial_0 \wedge \partial_1$
\item $j_2\colon Z_{-1} \rightarrow W_3$ includes $Z_{-1}$ as $u_1=0$, taking $\frac{\partial}{\partial z} \wedge \frac{\partial}{\partial u} \mapsto \partial_0 \wedge \partial_2$.
\end{itemize}
\end{notation}

\begin{theorem} \label{emb3}
The embeddings of  Poisson surfaces $j_0(\mathbb C^2, \pi_0),$ $j_1(Z_3,\pi_i)$ with $i=0,1,2$ and $j_2(Z_{-1},\pi_i)$ 
with $i=0,1,2,3$
 generate 
all Poisson structures on $W_3$.
\end{theorem}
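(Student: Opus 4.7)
The plan is to mirror the proofs of \Cref{princ} and \Cref{emb2}: for each embedding $j_s$ and each Poisson structure $\pi_i$ on the source surface listed in \Cref{Zs}, I compute the pushforward $(j_s)_*\pi_i$ in the canonical $U$-chart of $W_3$ and identify it with one of the module generators $e_1,\dots,e_{13}$ of $M_3$ from \Cref{W3gens}, possibly up to sign and possibly recognized as the restriction to the embedded surface of a specific generator.

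I begin with $j_1\colon Z_3 \hookrightarrow W_3$ as $\{u_2 = v_2 = 0\}$, under which $\partial_z\wedge\partial_u \mapsto \partial_0 \wedge \partial_1$. Using $\pi_0 = (u,-\xi^2 v)$, $\pi_1 = (zu,-\xi v)$, $\pi_2 = (z^2 u,-v)$ on $Z_3$, the pushforwards on $U$ are $u_1\partial_0\wedge\partial_1$, $zu_1\partial_0\wedge\partial_1$, $z^2 u_1\partial_0\wedge\partial_1$, matching $e_7$, $e_8$, and the restriction to $\{u_2=0\}$ of $e_9 = -zu_1u_2 b_0 + z^2 u_1 b_2$. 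Next, for $j_2\colon Z_{-1}\hookrightarrow W_3$ as $\{u_1 = v_1 = 0\}$, one has $\partial_z\wedge\partial_u \mapsto \partial_0\wedge\partial_2$; the four generators $\pi_i = (z^i, \ldots)$ of \Cref{Zs} give pushforwards $z^i \partial_0\wedge\partial_2$ for $i=0,1,2,3$, matching (up to sign) $e_3$, $e_4$, $e_5$ and the restriction to $\{u_1=0\}$ of $e_6 = 3z^2 u_1 b_0 + z^3 b_1$. For $j_0\colon\mathbb{C}^2\hookrightarrow W_3$ as $\{z=0\}$, the pushforward of $\pi_0 = \partial_u\wedge\partial_v$ is $\partial_1\wedge\partial_2 = b_0$ on the fiber, whose natural global extensions (after multiplication by the global functions $u_1$ or $u_1 u_2$) are $e_1$ and $e_2$.

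To finish, I invoke \Cref{isos3} to fold in $e_3 \simeq e_6$, $e_7 \simeq e_9$, $e_{10}\simeq e_{12}$, and I use \Cref{subm} together with the relations from \Cref{W3gens} to realize the remaining generators as Poisson multiples of the pushforwards already obtained. Specifically, the relations $u_1 e_{10} = u_1 u_2 e_3$, $u_1 e_{13} = u_1 u_2 e_7$, and $u_1 e_{11} = zu_1 e_{10}$ show that $e_{10},e_{11},e_{13}$ (and, via isomorphism, $e_{12}$) lie in the submodule of Poisson structures generated over $\mathcal{O}(W_3)$ by the pushforwards from the $j_s(\pi_i)$.

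The main obstacle I expect is the careful bookkeeping between pushforwards and global extensions. A pushforward $(j_s)_*\pi_i$ is only defined on the image of $j_s$, while a generator $e_j$ is a global bivector on $W_3$; for the indices $j = 6, 9, 12$ the generator combines several $b_\bullet$-components (prescribed by the non-trivial off-diagonal entries of the transition matrix $\bigl(\begin{smallmatrix}z^2 & -3zu_1 & zu_2\\ 0 & -z^{-3} & 0\\ 0 & 0 & -z\end{smallmatrix}\bigr)$), so one must verify using this matrix that the chosen generator indeed restricts to the pushforward on the embedded surface and is holomorphic on the $V$-chart.
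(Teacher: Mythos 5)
Your computations for the three embeddings reproduce the paper's: $(j_1)_*\pi_0,(j_1)_*\pi_1$ give $e_7,e_8$; $(j_2)_*\pi_0,(j_2)_*\pi_1,(j_2)_*\pi_2$ give $e_3,e_4,e_5$ up to sign; $(j_0)_*$ of function multiples of $\pi_0$ gives $e_1,e_2$; and \Cref{isos3} folds in $e_6,e_9,e_{12}$. The genuine gap is in your final step. From the relation $u_1e_{10}-u_1u_2e_3=0$ in $M_3$ you conclude that $e_{10}$ lies in the $\mathcal O(W_3)$-submodule generated by the pushforwards already obtained; but the relation only shows that $u_1e_{10}$ does, and $u_1$ is not invertible in $\mathcal O(W_3)$, so you cannot cancel it. In fact $e_{10}=u_2e_3$ on the $U$-chart and $u_2$ is \emph{not} a global function on $W_3$ (every generator of $\mathcal O(W_3)$ containing $u_2$ also contains $u_1$); comparing $b_1$-components, writing $e_{10}$ as an $\mathcal O(W_3)$-combination of $e_1,\dots,e_9$ would require expressing $u_2$ as $f_3+f_4z+f_5z^2+f_6z^3$ with $f_i\in\mathcal O(W_3)$, which is impossible. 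The same objection defeats your treatment of $e_{11}$ and $e_{13}$ (and hence of $e_{12}$).

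The paper closes this for $e_{10}$ and $e_{11}$ by working on the surface rather than in the module: it introduces $g\colon Z_{-1}\to\mathbb C$ with $g|_U(z,u)=u$, $g|_V(\xi,v)=\xi^{-1}v$, and pushes forward the bivectors $g\cdot(1,-\xi^3)=(u,-\xi^2v)$ and $g\cdot(z,-\xi^2)=(zu,-\xi v)$ --- which are globally defined bivectors on $Z_{-1}$ even though $g$ itself is not globally holomorphic, and are automatically Poisson since $Z_{-1}$ is a surface --- obtaining $e_{10}|_{j_2(Z_{-1})}=(j_2)_*(g\pi_0)$ and $e_{11}|_{j_2(Z_{-1})}=(j_2)_*(g\pi_1)$ directly. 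You should replace your module-relation step by such a direct realization. Note also that $e_{13}=u_1u_2\,\partial_0\wedge\partial_1$ restricts to zero on $j_1(Z_3)$ and on $j_2(Z_{-1})$ and is not tangent to the fiber $j_0(\mathbb C^2)$, so it requires a separate argument in any case; the paper's own proof is silent on this generator, and your relation $u_1e_{13}=u_1u_2e_7$ does not supply one for the reason above.
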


\begin{proof} 

Let $j_1(Z_3)$ (resp. $j_2(Z_{-1})$) be the embedding of the surface 
$Z_3$ (resp. $Z_{-1}$) into  $W_3$ by $u_2=v_2=0$ (resp. $u_1=v_1=0$). 
 Then Poisson structures induced by the first embedding are:
 $$(j_1)_*(1)_U= \left[\begin{matrix}0 \\ 0 \\ 1\end{matrix}\right]_U,\quad 
(j_1)_*(-\xi^2 v)_V= \left[\begin{matrix}0 \\ 0 \\ -\xi^2 v\end{matrix}\right]_V, \quad \text{hence} \quad 
e_7\vert_{j_1(Z_3)}=(j_1)_*\pi_0. $$
Analogously, $e_8\vert_{j_1(Z_3)}=j_1(\pi_0)$.
%
The Poisson structures induced by the second embedding are 
$$(j_2)_*(1)_U = \left[\begin{matrix}0 \\ 1 \\ 0\end{matrix}\right]_U, \quad 
(j_2)_*(-\xi^3)_V = \left[\begin{matrix}0 \\ -\xi^3\\ 0\end{matrix}\right]_V, \quad \text{hence} \quad 
 e_3\vert_{j_2(Z_{-1})}=(j_2)_*\pi_0.$$
Analogously
$e_4\vert_{j_2(Z_{-1})}=(j_2)_*\pi_1$, 
$e_5\vert_{j_2(Z_{-1})}=(j_2)_*\pi_2$.

Next, take $g \colon Z_{-1} \to \mathbb{C}$  defined by
$g\vert_U (z,u) = u$ and $g\vert_V (\xi,v) = \xi^{-1}v$,
then 
$e_{10}\vert_{j_2(Z_{-1})} =( j_2)_* (g .(1,-\xi^3))$ 
and
$e_{11}\vert_{j_2(Z_{-1})} =( j_2)_*(g. (z,-\xi^2 ))$.
Finally, 
$(j_0)_*(u) = e_1= u_1 \partial_1 \wedge \partial_2$ and
$(j_0)_*(uv) = e_2= u_1u_2 \partial_1 \wedge \partial_2$  give  $e_1$ and $e_2$. 
\end{proof}


\subsection{Poisson cohomology for \texorpdfstring{$\bm{W_3}$}{W\_3}}\label{cohw3}

\begin{lemma}\label{3classes}
The generators of Poisson structures on $W_3$ are divided up into 3 groups
according to their Casimir functions:
$\{e_1, e_2\}, \quad \{e_3, e_4,e_5,e_{10},e_{11}\}, \quad \{e_7, e_8,e_{13}\}.$
\end{lemma}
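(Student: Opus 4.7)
The plan is to compute, for each of the ten generators $e_i$ listed in the three groups, the Poisson cohomology $H^0(W_3, e_i) = \ker[e_i, \cdot]$ directly on the $U$-chart, in the spirit of Lemmata~\ref{coh-beta0}--\ref{coh-gamma} for $W_2$. The three generators $e_6, e_9, e_{12}$ not appearing on the list have already been identified with $e_3, e_7, e_{10}$ in \Cref{isos3}, so their Casimir rings are inherited by isomorphism and need not be computed separately.

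The key observation I will exploit is that each of the ten generators in question is of the \emph{pure} form $g\,\partial_j \wedge \partial_k$ for a single pair of indices $(j,k)$: inspecting \Cref{W3gens} gives $e_1 = u_1 b_0$, $e_2 = u_1 u_2 b_0$; then $e_3 = b_1$, $e_4 = z b_1$, $e_5 = z^2 b_1$, $e_{10} = u_2 b_1$, $e_{11} = z u_2 b_1$; and $e_7 = u_1 b_2$, $e_8 = z u_1 b_2$, $e_{13} = u_1 u_2 b_2$. For any such pure bivector and any holomorphic function $f$, the graded Leibniz rule for the Schouten--Nijenhuis bracket together with $[\partial_j, \partial_k] = 0$ yields
\[
[f,\, g\,\partial_j \wedge \partial_k] \;=\; g\,\frac{\partial f}{\partial x^k}\,\partial_j \;-\; g\,\frac{\partial f}{\partial x^j}\,\partial_k,
\]
which vanishes, on the open dense locus $\{g \neq 0\}$ and hence everywhere by continuity, exactly when $\partial_j f = \partial_k f = 0$.

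Applied group by group, this immediately gives the claimed partition: for $\{e_1, e_2\}$ the active pair is $(j,k) = (1,2)$, forcing $\partial_{u_1} f = \partial_{u_2} f = 0$, i.e.\ $f = f(z)$; for $\{e_3, e_4, e_5, e_{10}, e_{11}\}$ the active pair is $(j,k) = (2,0)$, forcing $\partial_z f = \partial_{u_2} f = 0$, i.e.\ $f = f(u_1)$; for $\{e_7, e_8, e_{13}\}$ the active pair is $(j,k) = (0,1)$, forcing $\partial_z f = \partial_{u_1} f = 0$, i.e.\ $f = f(u_2)$. These are precisely the three Casimir types of the statement.

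I do not anticipate any substantive obstacle; the main calculation is a routine extension of the $W_2$ case. The only point deserving some care is that the excluded generators $e_6, e_9, e_{12}$ are genuinely \emph{not} of the pure form above, but mix two different $b_i$'s (for instance $e_6 = 3z^2 u_1 b_0 + z^3 b_1$), so a direct Casimir computation would impose strictly more restrictive conditions on $f$. This is exactly why they are absent from the three listed groups and must instead be brought in via \Cref{isos3}.
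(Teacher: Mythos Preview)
Your proposal is correct and follows essentially the same approach as the paper: both compute $H^0(W_3,e_i)=\mathrm{Cas}(e_i)$ for each of the ten generators and read off the three groups. The paper's proof is extremely terse (it simply asserts the outcome of ``calculating $0$-th Poisson cohomology''), whereas you spell out the mechanism---observing that each $e_i$ in the list is a pure bivector $g\,\partial_j\wedge\partial_k$ and that the Casimir condition then reduces to $\partial_j f=\partial_k f=0$ on the dense set $\{g\neq 0\}$---exactly in the style of the $W_2$ computations the paper does in detail; your remark on why $e_6,e_9,e_{12}$ fail to be pure and must be handled via \Cref{isos3} is a helpful clarification that the paper leaves implicit.
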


\begin{proof}
By \Cref{W3gens} all isomorphism classes of Poisson structures on $W_3$ 
are generated by $
e_1 ,
e_2 ,
e_3 ,
e_4 ,
e_5,
e_7,
e_8,
e_{10} ,
e_{11} ,
e_{13}$ and 
calculating $0$-th Poisson cohomology, we obtain:

$$\mbox{Cas}(\pi) =\left\{\begin{array}{lll}
 f \in \mathcal{O}(W_3) / f = f(z)  & if &   \pi = e_1, e_2,\\
 f \in \mathcal{O}(W_3) / f = f(u_1)& if & \pi=
e_3, e_4,e_5,e_{10},e_{11},\\
f \in \mathcal{O}(W_3) / f = f(u_2) & if &  \pi = e_7, e_8,e_{13}.
\end{array}
\right.$$
\end{proof}

\begin{lemma}\label{alphas}
The Poisson manifolds $(W_3,e_1)$ and $(W_3,e_2)$ are not isomorphic.
\end{lemma}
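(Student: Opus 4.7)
The plan is to distinguish $(W_3, e_1)$ from $(W_3, e_2)$ through their degeneracy loci viewed as analytic subvarieties of $W_3$. A Poisson isomorphism $\phi\colon (W_3, e_1) \to (W_3, e_2)$ must restrict to a biholomorphism $D(e_1) \to D(e_2)$, since by definition $D(\sigma)$ is the rank-drop locus of the intrinsic map $\mathcal{T}^*_{W_3} \to \mathcal{T}_{W_3}$ induced by $\sigma$. So it suffices to show $D(e_1) \not\simeq D(e_2)$ as varieties.

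First I would compute $D(e_1)$ in both charts. On $U$, $e_1 = u_1 \partial_1 \wedge \partial_2$ vanishes along $\{u_1 = 0\}$. Applying the transition matrix for $\Lambda^2 TW_3$ to the coefficient vector $(u_1, 0, 0)^T$ gives $(z^2 u_1, 0, 0)^T$; since $z^2 u_1 = \xi v_1$, the vanishing locus on $V$ is $\{\xi = 0\} \cup \{v_1 = 0\}$. Gluing, $D(e_1)$ is the union of two irreducible components: the total space $Z_{-1} = \Tot(\mathcal{O}_{\mathbb{P}^1}(1))$ obtained by gluing $\{u_1 = 0\}$ to $\{v_1 = 0\}$, and the fiber over the point at infinity $F_\infty = \{\xi = 0\} \cong \mathbb{C}^2$ (with coordinates $v_1, v_2$), which does not meet the $U$-chart. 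These two components intersect along $\{\xi = v_1 = 0\}$, which is the line parametrized by $v_2$, i.e.\ a copy of $\mathbb{C}$.

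Next I would run the same calculation for $e_2 = u_1 u_2 \partial_1 \wedge \partial_2$. The transition matrix yields the $V$-chart coefficient $z^2 u_1 u_2 = v_1 v_2$, so $D(e_2)$ is cut out on $U$ by $\{u_1 u_2 = 0\}$ and on $V$ by $\{v_1 v_2 = 0\}$. Its two irreducible components are therefore $Z_{-1}$ and $Z_3 = \Tot(\mathcal{O}_{\mathbb{P}^1}(-3))$, and they meet along the common zero section $\{u_1 = u_2 = 0\} \cup \{v_1 = v_2 = 0\}$, which is a copy of $\mathbb{P}^1$.

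The conclusion follows by comparing the singular loci: the intersection of the two components of $D(e_1)$ is isomorphic to the affine line $\mathbb{C}$, while the intersection of the two components of $D(e_2)$ is isomorphic to $\mathbb{P}^1$. Since one is compact and the other is not, there can be no isomorphism $D(e_1) \to D(e_2)$, and hence no Poisson isomorphism $(W_3, e_1) \to (W_3, e_2)$. The only step requiring care is the bookkeeping in the $V$-chart --- one must track which irreducible component of the local vanishing locus glues to which global surface --- but once the two components and their intersection are identified this way, the non-isomorphism is immediate from the compactness invariant.
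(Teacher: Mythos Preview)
Your argument is correct and follows the same overall strategy as the paper --- distinguish $(W_3,e_1)$ from $(W_3,e_2)$ by showing their degeneracy loci are non-isomorphic as subvarieties of $W_3$. The coordinate computations you give agree with the paper's.

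Where you diverge is in the final invariant. The paper simply lists
\[
D(e_1)=\{u_1=0\}\cup\{\xi=0\}\cup\{v_1=0\},\qquad
D(e_2)=\{u_1=0\}\cup\{u_2=0\}\cup\{v_1=0\}\cup\{v_2=0\},
\]
and concludes from ``different number of irreducible components''. You instead glue the chart-local pieces into their global components --- $D(e_1)\simeq Z_{-1}\cup\mathbb C^2$ and $D(e_2)\simeq Z_{-1}\cup Z_3$, each with exactly two irreducible components --- and then separate them by the singular locus (the pairwise intersection): $\mathbb C$ versus $\mathbb P^1$. Your route is a bit longer but more robust: since $\{u_1=0\}$ and $\{v_1=0\}$ (and likewise $\{u_2=0\}$ and $\{v_2=0\}$) glue to a single global divisor, the naive chart-by-chart count in the paper is not literally a count of global irreducible components, so your compactness-of-the-intersection argument supplies an honest invariant that the paper's phrasing leaves implicit. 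Either way, the geometric content (consistent with the summary table in the introduction, where the degeneracies are recorded as $Z_{-1}\cup\mathbb C^2$ and $Z_{-1}\cup Z_3$) is the same.
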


\begin{proof} We have that $e_1 = u_1 \partial u_1\wedge \partial u_2$ and $e_2= u_2 e_1$, 
or written as section of $\Lambda^2TW_3$ we have
$$
e_1 = 
	\left[\begin{matrix}
		u_1 \\
		0 \\
  		0
	\end{matrix}\right]_U
	=  \left[\begin{matrix}
		\xi v_1 \\
		0 \\
  		0
	\end{matrix}\right]_V , \quad 
e_2 = 	
	\left[\begin{matrix}
		u_1u_2 \\
		0 \\
  		0
	\end{matrix}\right]_U= 
	\left[\begin{matrix}
		 v_1v_2 \\
		0 \\
  		0
	\end{matrix}\right]_V
	$$

Therefore the degeneracy loci  have the following irreducible components:
$$D(e_1) = \{u_1=0\} \cup\{\xi=0\} \cup \{v_1=0\},$$
$$D(e_2) = \{u_1=0\} \cup\{u_2=0\} \cup \{v_1=0\} \cup\{v_2=0\}.$$
These have different number of irreducible components, implying $\alpha_0$ is not 
isomorphic to $\alpha_1$.
\end{proof}

\begin{lemma}\label{betas}
The Poisson manifolds $(W_3,e_3)$ , $(W_3,e_4)$, $(W_3,e_5)$, $(W_3,e_{10})$ and $(W_3,e_{11})$ 
are  pairwise nonisomorphic.
\end{lemma}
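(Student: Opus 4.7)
The plan is to follow the strategy used in \Cref{alphas} and distinguish the five Poisson manifolds by the isomorphism type of their degeneracy loci, since this is a Poisson-isomorphism invariant.

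First, for each $e_i$ in $\{e_3,e_4,e_5,e_{10},e_{11}\}$ I apply the transition matrix for $\Lambda^2 TW_3$ displayed at the beginning of this section to rewrite the $U$-chart expression in $V$-coordinates, using $z=\xi^{-1}$, $u_1=\xi^3 v_1$ and $u_2=\xi^{-1}v_2$. This produces explicit pairs of column vectors in the two charts from which the vanishing set can be read off. For example, $e_3=[0,1,0]^T_U$ becomes $[-3\xi^2 v_1,-\xi^3,0]^T_V$, so $D(e_3)$ lives only in the $V$-chart along $\{\xi=0\}$ and is a single $\mathbb{C}^2$; while $e_4=[0,z,0]^T_U$ vanishes on $\{z=0\}$ in $U$ and transforms to a vector vanishing on $\{\xi=0\}$ in $V$, giving two disjoint copies of $\mathbb{C}^2$.

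Second, I patch the local zero sets along $U\cap V$ to determine the global irreducible components. A locus supported in a single chart that does not extend across $U\cap V$ contributes a disjoint affine component, while loci of the form $\{u_1=0\}=\{v_1=0\}$ or $\{u_2=0\}=\{v_2=0\}$ patch into global subbundles of $\mathcal{O}_{\mathbb{P}^1}(-3)\oplus\mathcal{O}_{\mathbb{P}^1}(1)$ over $\mathbb{P}^1$. Concretely, the sub-line-bundle $\{u_2=0\}$ is $\Tot(\mathcal{O}_{\mathbb{P}^1}(-3))=Z_3$ and the sub-line-bundle $\{u_1=0\}$ is $\Tot(\mathcal{O}_{\mathbb{P}^1}(1))=Z_{-1}$; this is exactly what the symbols \Zthree{} and \Zmone{} in the table of the introduction record. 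For $e_5$, $e_{10}$, and $e_{11}$ I must also identify lower-dimensional components such as $\{v_1=\xi=0\}$, which give copies of $\mathbb{C}$ (the symbol \cfan).

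Third, I compare the resulting degeneracy loci: they differ in the number of irreducible components ($1$, $2$, $2$, $2$, $3$ for $e_3, e_4, e_5, e_{10}, e_{11}$) and, when the numbers coincide, in the isomorphism types of the components themselves. For instance $D(e_4)$ is two copies of $\mathbb{C}^2$ while $D(e_{10})$ consists of a $\mathbb{C}^2$ together with a surface isomorphic to $Z_3$, and $D(e_5)$ has a $\mathbb{C}$-component absent from $D(e_4)$ and $D(e_{10})$. Since a Poisson isomorphism would induce an isomorphism of the underlying varieties restricted to the degeneracy loci, these discrepancies rule out any Poisson isomorphism among the five.

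The main obstacle is the bookkeeping in step two: verifying carefully which local vanishing loci glue to global components and computing the correct isomorphism type of each component, particularly distinguishing $Z_3$ from $Z_{-1}$ via the transition function inherited from \eqref{canonical}. Once these identifications are fixed, the pairwise non-isomorphism of the five degeneracy loci, and hence of the five Poisson manifolds, is immediate.
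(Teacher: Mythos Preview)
Your proposal is correct and follows essentially the same approach as the paper: compute the degeneracy locus of each $e_i$ in both charts, and distinguish the five structures by the non-isomorphism of these loci. The paper's own proof is terser---it simply records the local equations $D(e_3)=\{\xi=0\}$, $D(e_4)=\{z=0\}\cup\{\xi=0\}$, $D(e_5)=\{z=0\}\cup\{\xi=v_1=0\}$, $D(e_{10})=\{u_2=0\}\cup\{\xi=0\}\cup\{v_2=0\}$, $D(e_{11})=\{z=0\}\cup\{u_2=0\}\cup\{v_2=0\}\cup\{\xi=v_1=0\}$ and asserts these are pairwise nonisomorphic---whereas you go a step further and identify the global irreducible components (e.g.\ recognising $\{u_2=0\}\cup\{v_2=0\}$ as a single copy of $Z_3$), which is what the summary table in the introduction records.
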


\begin{proof}
We compute the degeneracy loci of the Poisson structures.
We have that 
$$
e_3 = 
	\left[\begin{matrix}
		0 \\
		 1 \\
  		0
	\end{matrix}\right]_U
	=  \left[\begin{matrix}
		-3\xi^2v_1 \\
		 -\xi^3\\
  		0
	\end{matrix}\right]_V , \quad 
e_4 = 	
	\left[\begin{matrix}
		0 \\
		z \\
  		0
	\end{matrix}\right]_U= 
	\left[\begin{matrix}
		-3\xi v_1 \\
		-\xi^2 \\
  		0
	\end{matrix}\right]_V, 
e_5 = 
	\left[\begin{matrix}
		0 \\
		z^2 \\
  		0
	\end{matrix}\right]_U
	=
	\left[\begin{matrix}
		-3v_1 \\
		-\xi \\
  		0
	\end{matrix}\right]_V
	$$
$$
e_{10} = 
	\left[\begin{matrix}
		0 \\
		u_2 \\
  		0
	\end{matrix}\right]_U
	=  \left[\begin{matrix}
		-3\xi v_1v_2 \\
		-\xi^2 v_2 \\
  		0
	\end{matrix}\right]_V , \quad 
e_{11} = 	
	\left[\begin{matrix}
		0 \\
		zu_2 \\
  		0
	\end{matrix}\right]_U= 
	\left[\begin{matrix}
		 -3v_1v_2 \\
		-\xi v_2\\
  		0
	\end{matrix}\right]_V.
	$$

The degeneracy loci are:
\begin{align*}
D(e_3) & = \{ \xi=0 \}, \\
D(e_4) & = \{ z=0 \} \cup \{ \xi=0 \}, \\
D(e_5) & = \{ z=0 \} \cup \{ \xi=v_1=0 \}, \\
D(e_{10}) & = \{u_2=0\} \cup \{\xi=0\} \cup \{v_2=0\}, \\
D(e_{11}) & = \{z=0\} \cup \{u_2=0\} \cup \{v_2=0\} \cup \{\xi=v_1=0\}, 
\end{align*}
these  are pairwise nonisomorphic, and thus also
their corresponding Poisson structures.
\end{proof}

\begin{lemma}\label{gammas}
The Poisson manifolds $(W_3,e_7)$, $(W_3,e_8)$ and $(W_3,e_{13})$ are pairwise nonisomorphic.
\end{lemma}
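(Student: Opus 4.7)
The argument follows the pattern of \Cref{alphas} and \Cref{betas}: we distinguish the Poisson manifolds $(W_3, e_j)$ for $j \in \{7, 8, 13\}$ by computing their degeneracy loci and comparing the resulting complex analytic spaces. Each of $e_7, e_8, e_{13}$ has only its third entry nonzero on the $U$-chart, so applying the transition matrix of $\Lambda^2 TW_3$ and rewriting via $z = \xi^{-1}$, $u_1 = \xi^3 v_1$, $u_2 = \xi^{-1} v_2$ is a short computation.

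First I would compute the $V$-chart expressions of $e_7, e_8, e_{13}$, then read off the common zero loci of their three components in each chart. I expect the outcome to be $D(e_7) = \{u_1 = 0\} \cup \{\xi = 0\}$, $D(e_{13}) = \{u_1 = 0\} \cup \{u_2 = 0\}$, and $D(e_8) = \{u_1 = 0\} \cup \{z = 0\} \cup \{\xi = v_2 = 0\}$, where the $V$-chart loci glue to the $U$-chart loci via $u_1 = \xi^3 v_1$ and $u_2 = \xi^{-1} v_2$.

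Second, I would identify the global irreducible components: the divisor $\{u_1 = 0\} = \{v_1 = 0\}$ is the zero section of the $\mathcal O(-3)$ summand, isomorphic to $Z_{-1}$; the divisor $\{u_2 = 0\} = \{v_2 = 0\}$ is the zero section of the $\mathcal O(1)$ summand, isomorphic to $Z_3$; both $\{z = 0\}$ and $\{\xi = 0\}$ are full fibers of the projection $W_3 \to \mathbb{P}^1$, each isomorphic to $\mathbb{C}^2$; and $\{\xi = v_2 = 0\}$ is a line $\mathbb{C}$ sitting inside one such fiber. The number of irreducible components is a biholomorphic invariant, so $D(e_8)$ (three components) is already distinguished from $D(e_7)$ and $D(e_{13})$ (two each). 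To separate $e_7$ from $e_{13}$, I would note that their second components are non-isomorphic: $\{\xi = 0\} \cong \mathbb{C}^2$ contains no compact curves, while $Z_3$ contains the compact zero section $\mathbb{P}^1$.

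The main obstacle is the global bookkeeping: checking that each locally defined component glues correctly across $U \cap V$, and in particular verifying that the one-dimensional set $\{\xi = v_2 = 0\}$ in $D(e_8)$ is a genuine additional irreducible component, rather than being absorbed into either the other fiber $\{z = 0\}$ (which lies entirely in $U$) or the divisor $Z_{-1}$ (with which it has only $0$-dimensional intersection). Once this is confirmed, the pairwise non-isomorphism of $(W_3, e_7)$, $(W_3, e_8)$, and $(W_3, e_{13})$ follows by a direct comparison of invariants, exactly as in the proofs of \Cref{alphas} and \Cref{betas}.
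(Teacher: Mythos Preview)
Your proposal is correct and follows essentially the same approach as the paper: compute the $V$-chart expressions of $e_7, e_8, e_{13}$, read off the degeneracy loci, and conclude that these loci are pairwise nonisomorphic as complex analytic spaces. Your global bookkeeping is in fact more careful than the paper's own proof, which lists the chart-by-chart zero sets (e.g.\ both $\{u_1=0\}$ and $\{v_1=0\}$ for $D(e_7)$) without explicitly noting that these glue to a single divisor, and which asserts the pairwise nonisomorphism of the loci without naming a distinguishing invariant; your observation that $\{\xi=0\}\cong\mathbb C^2$ contains no compact curve while $Z_3$ does is exactly the kind of argument the paper leaves implicit.
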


\begin{proof}
We compute the degeneracy loci of the Poisson structures.
We have that 
$$
e_7 = 
	\left[\begin{matrix}
		0 \\
		 0 \\
  		u_1
	\end{matrix}\right]_U
	=  \left[\begin{matrix}
		\xi v_1 v_2\\
		 0\\
  		-\xi^2v_1
	\end{matrix}\right]_V,
e_8 = 	
	\left[\begin{matrix}
		0 \\
		0\\
  		zu_1
	\end{matrix}\right]_U= 
	\left[\begin{matrix}
		 v_1v_2 \\
		0\\
  		-\xi v_1
	\end{matrix}\right]_V, 
e_{13} = 
	\left[\begin{matrix}
		0 \\
		0 \\
  		u_1u_2
	\end{matrix}\right]_U
	=
	\left[\begin{matrix}
		v_1v_2^2 \\
		0 \\
  	-	\xi v_1v_2
	\end{matrix}\right]_V.
	$$

So, the degeneracy loci:
\begin{align*}
D(e_7) & = \{u_1 = 0\} \cup \{ \xi=0 \} \cup \{ v_1=0 \}, \\
D(e_8) & =\{z = 0\} \cup \{ u_1 = 0 \} \cup \{ \xi = v_2 = 0\} \cup \{v_1 = 0\}, \\
D(e_{13}) & = \{ u_1 = 0 \} \cup \{ u_2 = 0 \} \cup \{ v_1 = 0 \} \cup \{ v_2 = 0 \},
\end{align*}
are pairwise nonisomorphic, and therefore the corresponding Poisson 
structures are distinct.
\end{proof}

This  concludes the  lemmata needed to prove \Cref{W3gens},
showing that all 10 listed isomorphism classes of Poisson structures are indeed distinct.

\subsection{Symplectic foliations on \texorpdfstring{$\bm{W_3}$}{W\_3}}

We have seen that all possible Poisson structures on $W_3$ can be obtained from 
$e_1 ,
e_2 ,
e_3 ,
e_4 ,
e_5,
e_7,
e_8,
e_{10} ,
e_{11} ,
e_{13}$   given in \Cref{W3gens}. Their corresponding symplectic foliations have
$0$-dimensional  leaves consisting of single points inside their degeneracy loci described in \Cref{3classes},
 and outside these loci, all symplectic leaves 
are $2$-dimensional and can be described as follows.

\begin{theorem}\label{W3foliation}
The symplectic foliations on $W_3$ 
have 0-dimensional  leaves consisting of single points over each 
of their corresponding  degeneracy loci described in
the proofs of Lemmata \ref{alphas}, \ref{betas}, \ref{gammas}, and their generic  leaves, which  are 2-dimensional, 
are as follows:

\begin{itemize}
\item  Isomorphic to $\mathbb C^*\times \mathbb C$ for $e_1$.
\item  Isomorphic to $\mathbb C^*\times \mathbb C^*$ for $e_2$.
\item  Surfaces of constant $u_1$, for $e_3, e_4,e_5, e_{10}, e_{11}$ and $e_{13}$. 
\item Surfaces of constant $u_2$,  for $e_7$ and $e_8$.
\end{itemize}

\end{theorem}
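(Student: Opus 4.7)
The plan is to combine \Cref{leaves} (which shows that on a threefold every symplectic leaf is either a single degeneracy point or $2$-dimensional) with the Casimir computations of \Cref{3classes}. The $0$-dimensional leaves sit inside the degeneracy loci already identified in Lemmata \ref{alphas}, \ref{betas}, \ref{gammas}, so what remains is to exhibit the generic $2$-dimensional leaves. The key observation is that each generator $e_i$ admits a nontrivial Casimir of the form $f(z)$, $f(u_1)$, or $f(u_2)$; since a Casimir is constant along every symplectic leaf while its level sets are $2$-dimensional, a dimension count forces each generic leaf to be (an open subset of) a level set of the associated Casimir. The plan is therefore to restrict each Poisson bivector to the relevant level set, verify that the restriction is nondegenerate on the complement of $D(e_i)$, and read off the resulting surface.

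Carrying this out family by family: for $e_1 = u_1 \partial_1 \wedge \partial_2$, the slice $\{z = c\}$ intersected with the complement of $D(e_1) = \{u_1 = 0\} \cup \{\xi = 0\} \cup \{v_1 = 0\}$ is isomorphic to $\mathbb{C}^{\ast} \times \mathbb{C}$; for $e_2 = u_1 u_2 \partial_1 \wedge \partial_2$, the additional components $\{u_2 = 0\} \cup \{v_2 = 0\}$ of $D(e_2)$ must also be removed, leaving $\mathbb{C}^{\ast} \times \mathbb{C}^{\ast}$. For $e_3, e_4, e_5, e_{10}, e_{11}$, each bivector lies in the $b_1 = \partial_2 \wedge \partial_0$ direction, so its Hamiltonian flow preserves $u_1$ and the generic leaves are (the nondegenerate parts of) slices $\{u_1 = c\}$. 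Similarly $e_7, e_8$ (and, by the same argument, $e_{13}$, whose Casimir per \Cref{3classes} is in fact $f(u_2)$) lie in the $b_2 = \partial_0 \wedge \partial_1$ direction, hence preserve $u_2$ and foliate by slices of constant $u_2$.

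The main obstacle is the chart bookkeeping: for each generator one must check that the $U$-chart description glues coherently to the $V$-chart, and that the level set minus the degeneracy locus is connected, so that it is a single symplectic leaf rather than a union of several. For $e_1$ and $e_2$ the slices $\{z = 0\}$ live purely in $U$ and must be matched separately with the corresponding $\{\xi = 0\}$ picture in $V$; for the Casimir-$u_1$ generators the only slice that extends globally is $\{u_1 = 0\}$, which happens to lie in the degeneracy locus of $e_3, e_4, e_5, e_{10}, e_{11}$ and so contributes only $0$-dimensional leaves. Handling these chart-level subtleties and connectivity checks is the routine but lengthy part that completes the argument after the dimension count has pinned down the leaves up to open subsets.
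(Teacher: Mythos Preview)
Your approach is essentially the paper's own proof, which consists of the single sentence ``Combine the Casimir functions given in the proof of \Cref{3classes} with \Cref{leaves}''; you have simply unpacked that combination explicitly and added the chart-level connectivity discussion. Your aside on $e_{13}$ is well-taken: its Casimir is indeed $f(u_2)$ by \Cref{3classes} (and the summary table), so the theorem statement's placement of $e_{13}$ in the constant-$u_1$ family appears to be a slip in the paper rather than a gap in your argument.
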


\begin{proof}
Combine the Casimir functions given in the proof of \Cref{3classes} with \Cref{leaves}.
\end{proof}

\paragraph{\bf Acknowledgements}  
We are grateful to Brent Pym for kindly explaining to us some of the fundamental notions of Poisson geometry.
E. Ballico is a member of  GNSAGA of INdAM (Italy).
E. Gasparim thanks the Department of Mathematics of  the University of Trento for the 
support and  hospitality.
B. Suzuki was supported by the ANID-FAPESP cooperation 2019/13204-0.


\begin{thebibliography}{999}

\bibitem[BGS]{BGS} E. Ballico, E. Gasparim, B. Suzuki, \textit{Infinite dimensional families of Calabi--Yau threefolds
and moduli of vector bundles}, 
 J. Pure Appl. Algebra {\bf 225} n.4  (2021) 106554, 24 pp..

\bibitem[BG1]{BG1} S. Barmeier, E. Gasparim, \textit{Classical deformations of local surfaces and their moduli of instantons}, J. Pure 
App. Algebra {\bf 223} n. 6 (2019)  2543--2561.

\bibitem[BG2]{BG2} S. Barmeier, E. Gasparim, \textit{Quantizations of Local Surfaces and Rebel Instantons},  J. Noncommutative G. (2021).

\bibitem[BeG]{BeG}
O. Ben-Bassat, E. Gasparim,
\textit{Moduli stacks of bundles on local surfaces},
in: R. Castano-Bernard, F. Catanese, M. Kontsevich, T. Pantev, Y. Soibelman, I. Zharkov (eds.), Homological Mirror Symmetry and Tropical Geometry, Springer, Berlin (2014) 1--32.

\bibitem[BBR]{BBR} G. Bonelli, L. Bonora,  A. Ricco,
\textit{Conifold geometries, topological strings, and multimatrix models},  Phys. Rev. D 72, 086001.

\bibitem[GS]{Pushan}  E. Gasparim, B. Suzuki, \textit{Curvature grafted by instantons},   Indian J. Phys {\bf 95} n.8 (2021) 1631--1638.

\bibitem[Jim]{J} J. Jim\'enez, \textit{Contraction of nonsingular curves}, Duke Math. {\bf 65} (1992) 313--332.

\bibitem[LPV]{LPV} C. Laurent-Gengoux, A. Pichereau, P. Vanhaecke, \textit{Poisson structures},
Grundlehren Math. Wiss. {\bf 347} Springer-Verlag (2013).

\bibitem[Po]{Po} A. Polishchuk, \textit{Algebraic geometry of Poisson brackets}, J. Math. Sciences {\bf 84} n.5 (1997) 1413--1444.

\bibitem[Pym]{Pym} B. Pym, \textit{Constructions and classifications of projective Poisson varieties},            
 Lett. Math. Phys. {\bf 108} (2018) 573--632.
 
\end{thebibliography}
\end{document}